\numberwithin{equation}{section}
 \newtheoremstyle{numberedstyle}
   {9pt}
   {9pt}
   {\normalfont}
   {}
   {\bfseries}
   {.}
   {\newline}
   {}
\newcommand{\interior}{\operatorname{int}}
\newcommand{\hypdim}{\dim_{\operatorname{hyp}}}
\newcommand{\BlogP}{\mathcal{B}_{\log}^{\operatorname{p}}}
\newtheorem{thm}{Theorem}[section]%
\newtheorem{lem}[thm]{Lemma}%
\newtheorem{cor}[thm]{Corollary}%
\newtheorem{prop}[thm]{Proposition}%
\newcommand{\V}{\mathcal{V}}
\theoremstyle{numberedstyle}
\newtheorem{question}[thm]{Question}%
\newtheorem{defn}[thm]{Definition}%
\newtheorem{rem}[thm]{Remark}
\newcommand{\sign}{\operatorname{sign}}
\newcommand{\B}{\mathcal{B}}
\renewcommand{\S}{\mathcal{S}}
\newcommand{\T}{\mathcal{T}}
\newcommand{\HH}{\mathbb{H}}
\title[Hyperbolic entire functions with full hyperbolic dimension]{Hyperbolic entire functions \\ with full hyperbolic dimension \\ and approximation by
   Eremenko-Lyubich functions}
\author{Lasse Rempe-Gillen}
\thanks{This work was supported by EPSRC Fellowship EP/E052851/1.}
\subjclass[2010]{30D05, 30E10, 37F10 (Primary); 30D10, 30D15,  37F35 (Secondary)}
\newcommand{\dimh}{\dim_{\operatorname{H}}}
\newtheorem{standingassumption}[thm]{Standing Assumption}
\begin{document}

 \begin{abstract}
   We show that there exists a hyperbolic entire function $f$ of finite order
     of growth such that the hyperbolic dimension---that is, the
     Hausdorff dimension of the set of points in the Julia set of $f$ whose orbit
     is bounded---is equal to two. This is in contrast to the rational case, where
     the Julia set of a hyperbolic map must have Hausdorff dimension less than two,
     and to the case of all known explicit hyperbolic entire functions. 

 In order to obtain this example, we prove a general result on constructing
  entire functions in the 
  \emph{Eremenko-Lyubich class} $\B$ with prescribed behavior near infinity,
  using Cauchy integrals. This result significantly increases the class of functions
  that were previously known to be approximable in this manner. 

 Furthermore, we show that the approximating functions are quasiconformally
   conjugate to their original models, which simplifies the construction of 
   dynamical counterexamples. We also give some further applications of our
   results to transcendental dynamics.
 \end{abstract}

\maketitle

 \section{Introduction}
  The Hausdorff dimension
   $\dimh(J(f))$ of the Julia set of a rational function $f$ has been extensively studied.
   A related quantity, the \emph{hyperbolic dimension} $\hypdim(f)$, 
   was introduced by Shishikura \cite{mitsudim} as the
   supremum over the Hausdorff dimensions of hyperbolic subsets of $J(f)$.
   (Here a \emph{hyperbolic set} $K\subset J(f)$ is a compact, forward invariant subset
    of $J(f)$ such that sufficiently high iterates of $f$ are expanding when restricted
    to $K$.) 

  Clearly the hyperbolic dimension is a lower bound for $\dim(J(f))$. If the rational function
    $f$ is hyperbolic, then by definition $J(f)$ is a hyperbolic set itself, and hence
    \begin{equation} \label{eqn:dimequality}
      \dim J(f) = \hypdim(f). 
    \end{equation}
  It is natural to ask whether (\ref{eqn:dimequality}) holds more generally.
   In other words, how   
     prevalent is expanding dynamics in the Julia set of a rational function?
 \begin{question}\label{qu:rational}
  Is there a rational function $f$ such that $\dim J(f) \neq \hypdim(f)$? 
 \end{question}
 
 The relation (\ref{eqn:dimequality}) is known to hold in a vast number of cases,
   including all non-recurrent rational functions. 
  The tantalizing possibility of an example of a rational function where
   (\ref{eqn:dimequality}) fails was first suggested by results
  of Avila and Lyubich \cite{avilalyubichfeigenbaum2} on Feigenbaum quadratic
  polynomials with periodic combinatorics. They show that, \emph{if} such a map
  exists whose Julia set has positive area (and hence dimension $2$), 
  then its hyperbolic dimension 
  would need to be strictly less
  than two. Since this article was submitted for publication, 
  Avila and Lyubich have announced a proof
  that such Feigenbaum Julia sets of positive area do indeed exist, answering Question  
  \ref{qu:rational} in the positive. It remains open whether
   there is a rational function $f$
   such that $\hypdim(f) < \dim J(f) < 2$.

 The results of Avila and Lyubich resonate
  strongly with the iteration theory of transcendental entire functions.
  Here, in stark contrast to the rational case, even \emph{hyperbolic} functions 
  (see Definition \ref{defn:disjointtype}) frequently
  satisfy $\dim(J(f))=2$ and $\hypdim(f)<2$. 
  Stallard \cite{stallardhyperbolicmero} was the first to
  construct hyperbolic examples with $\hypdim(f)<\dim(J(f))$ (in slightly different terminology), 
  while Urba\'nski and Zdunik \cite{urbanskizdunik1} proved
  that this situation occurs for hyperbolic exponential maps $f(z)=\exp(z)+a$, where
  $\dim(J(f))=2$ by a result of McMullen \cite{hausdorffmcmullen}. This
  suggests that a systematic
  understanding of the measurable dynamics of transcendental
  entire functions is not only interesting in its own right, but can also help to
  shed further light on phenomena such as those discovered by Avila and Lyubich.

  A class of hyperbolic entire functions that has received particular attention
  in recent years is given by those of finite order and disjoint type:
 \begin{defn} \label{defn:disjointtype}
   A transcendental entire function $f:\C\to\C$ is \emph{hyperbolic} if
    there exists a compact set $K\subset\C$ with $f(K)\subset \interior(K)$ such that
    the restriction
     \[ f:f^{-1}(\C\setminus K)\to \C\setminus K \]
    is a covering map.
   An entire function is said to be \emph{of disjoint type} if it is hyperbolic and the Fatou set $F(f)$ is 
    connected, or equivalently if the set $K$ can be chosen to be connected.

  An entire function has \emph{finite order} if, setting
    $\log_+r=\max(0,\log r)$, we have
     \[ \limsup_{z\to\infty} \frac{ \log_+\log_+ |f(z)|}{\log_+|z|} < \infty. \]
 \end{defn}

 The topology of the Julia set of a hyperbolic entire function of finite order 
  is completely understood \cite{baranskihyperbolic,brush,boettcher,strahlen}.
  Moreover, the Hausdorff dimension of $J(f)$ is equal to two in this case
  \cite{baranskihausdorff}, and the hyperbolic dimension is greater than one
  \cite{baranskikarpinskazdunik}. 
  More precisely, suppose that $f$ is of disjoint type and finite
  order. Then:
  \begin{itemize}
    \item The Julia set is a disjoint uncountable union of curves to $\infty$,
      each consisting of a finite \emph{endpoint} and a \emph{ray} connecting this 
      endpoint to infinity. In fact, $J(f)$ is ambiently homeomorphic to a
      straight brush in the sense of \cite{aartsoversteegen} (i.e., a certain universal
      plane topological object). 
     \item The set of endpoints in $J(f)$ has Hausdorff dimension equal to two.
     \item The union of rays in $J(f)$ (without endpoints) 
        has Hausdorff dimension equal to one.
  \end{itemize}

  Furthermore, for a large class of hyperbolic entire functions of finite order,
   including all hyperbolic maps in the exponential family $z\mapsto \exp(z)+a$, the    
   trigonometric family $z\mapsto a\exp(z) + b\exp(-z)$ and many others,
   the measurable dynamics is described in detail by the results of
   \cite{mayerurbanskifractal,mayerurbanski}. In particular, these maps satisfy
   $\hypdim(f)<2=\dim(J(f))$. This suggests the following problem.

 \begin{question}
  Let $f:\C\to\C$ be a hyperbolic transcendental entire function of finite order.
   Is it always the case that $\hypdim(f)<2$?
\end{question}

 In this article, we give a negative answer.

 \begin{thm}[(Hyperbolic functions with full hyperbolic dimension)]\label{thm:hypdim}
  There exists a transcendental entire function $f$ of disjoint type and finite order
    such that $\hypdim(f)=2$.
   Furthermore, $f$ can be chosen such that $J(f)$ has positive measure. 
 \end{thm}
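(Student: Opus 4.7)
The plan is to construct $f$ as a quasiconformal perturbation of a carefully designed model map, invoking the Cauchy--integral approximation theorem from the body of the paper as the main technical device. In broad strokes: I would (a) design a model consisting of many pairwise disjoint logarithmic tracts arranged so that a naturally associated expanding Cantor-like invariant subset has Hausdorff dimension exactly $2$ (and, in the stronger version, positive planar area); (b) apply the approximation theorem to realise the model as a genuine entire function $f\in\B$ of disjoint type and finite order; and (c) transport the dimensional and measure-theoretic properties along the quasiconformal conjugacy supplied by the theorem.

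For step (a), the model is built from a collection $\{T_n\}$ of pairwise disjoint unbounded Jordan domains in $\C$, each equipped with a conformal isomorphism onto a fixed right half-plane. The finite-order condition constrains the growth of the number and size of the $T_n$'s, while the disjoint-type condition is arranged by choosing their images under the model map to be disjoint from the union of the tracts. The crucial design choice is to arrange the tracts so that the topological pressure $P(t)$ of a natural induced iterated function system, built from finite products of inverse branches with bounded combinatorics, satisfies $P(t)>0$ for every $t<2$; by Bowen's formula this forces the hyperbolic dimension of the invariant set to be $2$. For the positive-area refinement, one additionally demands that the tracts occupy positive area density near infinity along selected Markov branches, so that a density/Borel--Cantelli argument in the spirit of McMullen shows that the set of bounded orbits has positive planar measure.

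For step (b), the approximation theorem, under the geometric hypotheses which our model is built to satisfy, supplies an entire function $f\in\B$ of finite order whose logarithmic coordinates agree with those of the model outside a compact set up to a globally bounded quasiconformal distortion. Since the model is disjoint type by construction and disjoint type is preserved under the conjugacy, $f$ is disjoint type as well. For step (c), the quasiconformal map furnished by the theorem conjugates model dynamics to $f$ in a neighbourhood of $J(f)$; quasiconformal maps preserve the properties ``Hausdorff dimension equal to $2$'' and ``positive planar Lebesgue measure'', so the expanding invariant subset of the model transfers to an expanding invariant subset of $f$ witnessing $\hypdim(f)=2$, and the positive-measure subset of bounded orbits transfers to a subset of $J(f)$ of positive area.

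The principal obstacle is step (a): arranging the tracts so that $P(t)>0$ for all $t<2$ while remaining compatible with the finite-order growth constraint. For every previously known hyperbolic entire function of finite order the derivative cocycle on bounded orbits is large enough that the pressure becomes negative strictly before $t=2$, which is precisely why $\hypdim(f)<2$ in all of those examples. Finding a tract configuration in which expansion is slow enough to keep $P(t)>0$ on all of $[0,2)$, yet the tracts are still sparse enough to correspond to a function of finite order, is the combinatorial and analytic heart of the argument, and is what makes the flexibility of the Cauchy--integral approximation theorem (over earlier, more rigid constructions) essential here.
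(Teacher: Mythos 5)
Your high-level outline — build a model, approximate it in $\B$ via the Cauchy-integral theorem, transport the relevant metric properties through the quasiconformal conjugacy — matches the paper's strategy exactly, and step (c) is essentially right. But the entire mathematical content of the theorem lives in step (a), which you explicitly flag as ``the principal obstacle'' and then leave completely open, so the proposal has a genuine gap at precisely the place where a new idea is required. Knowing that you want $P(t)>0$ for all $t<2$ is a restatement of the goal, not a construction.

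Two further technical points are off even at the level of the outline. First, the paper's model has a \emph{single} tract $T$ (not a collection $\{T_n\}$); in logarithmic coordinates this lifts to the $2\pi i\Z$-translates of one component $V$, but the approximation theorem as proved applies to a map $\Psi:T\to H$ with one tract. Second, the model must map onto the specific domain $H=\{x+iy:x>-14\log_+|y|\}$ that is slightly larger than $\HH$; your proposal puts the target at ``a fixed right half-plane.'' This is not a cosmetic distinction: the margin between $H$ and $\HH$ is exactly what makes the Cauchy integral converge, and the paper remarks that even a polynomially wider domain $\{x>-|y|^{\eps}\}$ would ruin the dimension estimate, because the distance of the model points to $\partial H$ would grow too quickly and the derivatives of the inverse branches would decay too fast.

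What the paper actually does to fill step (a) is the following. In logarithmic coordinates it builds one domain $V(\Xi)$, a central strip of fixed height $2h<2\pi$ to which a sequence of equally spaced ``chambers'' of width $w$ is attached, the $k$-th chamber being opened by a fraction $\eps_k\in[0,1]\cup\{1^*\}$ encoded by a parameter sequence $\Xi$. Working on an annulus of real parts between $K$ and $3K$, each chamber contributes one inverse branch $G_0^{-1}(\,\cdot\,+2\pi i m_k)$ of the logarithmic transform, and translating by all admissible $2\pi i\ell$ gives $\sim K$ branches per chamber, hence $\sim K^2$ branches in total. The crux is a parameter-selection theorem (proved with a degree-theoretic fixed-point argument on the cube, not just the intermediate value theorem) that adjusts $\Xi$ so that the level curve $\Gamma_{2K}=\{\re G=2K\}$ passes through a distinguished point $\zeta_k$ at a definite distance from $\partial V$ in every chamber in that range; by the hyperbolic estimate for $H$ (Lemma 7.1) this bounds the contraction of each branch below by $\lambda_0/K$. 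The dimension bound $\dimh(X_i)\ge\log N/\log(1/\text{contraction})\ge (2\log K+O(1))/(\log K+O(1))\to 2$ then follows from a direct IFS estimate; Bowen's formula would of course give the same conclusion, so your language there is an acceptable substitute, but it does not replace the chamber construction or the selection theorem. For the positive-area refinement, the paper lets $K_{i+1}\gg 3K_i$ and forces the intervening stretches of the tract to have full height $2\pi$, then invokes Aspenberg–Bergweiler rather than McMullen.

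So: your plan is headed in the right direction and the transport step is correct, but the proposal as written proves nothing, because it does not construct a single admissible model, and the constraints on what the model can look like (one tract, codomain $H$ of logarithmic type) are precisely what make the construction delicate.
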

 
\subsection*{Approximation} In order to obtain the desired counterexample,
  we shall first construct a suitable ``model function'' with the desired behavior, and
   then approximate this map by an entire function. The idea of using
   approximation to construct interesting examples in complex dynamics
   was introduced by Eremenko and Lyubich \cite{alexmishaexamples}, who used
   Arakelyan's theorem \cite[Satz IV.2.3]{gaier}, an important result of approximation
   theory. Given a closed set $A$, this theorem states that any function $g$, 
   defined and continuous on
   $A$ and holomorphic on its interior, can be uniformly approximated by
   entire functions if and only if $A$ satisfies certain simple topological
   conditions. 

  Arakelyan's theorem, while powerful, has the drawback that there is little we can say about
  the behavior of the approximating function $f$ outside of the set $A$. 
  In particular, we have no control over 
  the set $\sing(f^{-1})$ of critical and asymptotic values
  of $f$, which prevents us from being able to restrict the global function-theoretic
 or dynamical properties. Indeed, in order to obtain hyperbolic examples,
  we will at least need to be sure that the approximating function belongs to
  the \emph{Eremenko-Lyubich class}
    \[ \B := \{f:\C\to\C
  \text{ transcendental entire: $\sing(f^{-1})$ is bounded}\},\]
   which was introduced in \cite{alexmisha}. A related
   problem, which we 
   mention here for completeness although it is not treated in this
   article, is approximation by functions in the 
   \emph{Speiser class}
    \[ \S := \{f:\C\to\C\text{ transcendental entire: $\sing(f^{-1})$ is finite}\}
         \subset \B. \]

  Leaving aside dynamics for a moment, let us discuss the question  
   of the function-theoretic
   behavior that these maps can exhibit. As it turns out, this will be the main 
   problem to deal with when constructing hyperbolic examples. 

  If $f\in\B$, then for sufficiently large $R>0$, every component of
   $f^{-1}(\{|z|>R\})$ is simply connected and mapped by $f$ as a universal
   covering. These components are called the 
   \emph{tracts} of $f$ (over $\infty$). 
  If $f\in\B$ and $T$ is a tract of $f$ as above, then we can define
   a branch of $\log f$ on $T$, and 
   $\log f - \log R:T\to \HH$ is a conformal isomorphism
   (where $\HH := \{\re z > 0\}$ denotes the right half plane).
  Conversely, it is natural to ask which universal coverings can be
  approximated by entire functions in the class $\B$. 

 \begin{question} \label{qu:approx}
  Suppose that $T\subset\C$ is a Jordan domain whose boundary passes through infinity
   and that $\Psi:T\to\HH$ is a conformal isomorphism with $f(\infty)=\infty$. 
   Under which conditions on
   $\Psi$ does there exist an entire
   function $f\in\B$ (resp.\ $f\in\S$) such that
    \[ f(z) = e^{\Psi(z)} + O(1), \quad z\in T\ ?\] 
 \end{question}

  Arakelyan's theorem implies that such a function $f$ always exists if we
   drop the requirement that $f\in\B$. 

  A well-known way of building functions with a given tract is to
   use Cauchy integrals; see e.g.\ \cite[Part III, problem 158]{polyaszego}. 
   Although this method
   is rather old, and has had many applications over the years, it does not
   seem to have been treated systematically in the classical literature. 
   The only theorem of a general nature that we are aware of was recently
   stated in \cite{strahlen}, following a construction from a paper
   by Eremenko and Gol'dberg \cite{eremenkogoldberg}. The result in 
   \cite[Proposition 7.1]{strahlen} states that approximation
   is always possible when $\Psi$ is the restriction of a conformal isomorphism
   $\Psi:T'\to \Sigma$, where $\Sigma$ is a sector,
   $\Sigma=\{z\in\C: |\arg z | < \pi/2 + \eps\}$.

  In \cite{strahlen}, this general theorem is used to construct a 
   counterexample to the so-called strong Eremenko conjecture. However, the
   requirement that $\Psi$ extends to a conformal isomorphism onto a sector
   of opening angle
   greater than $\pi$ is rather strong. It prevents, for instance, the construction
   of functions of lower order $1/2$, as well as of tracts  
   such as the one depicted in \cite[Figure 1]{devaneyhairs}. 

  In this note, we present a considerable
   strenghtening of \cite[Proposition 7.1]{strahlen}, which states that
   approximation is always possible if $\Psi$ extends to a conformal isomorphism whose
   domain 
   is only ``slightly'' larger than the half plane $\HH$. 
   It is convenient to first introduce the following definition. 

   \begin{defn}[(Model functions)] \label{defn:modelfunction}
   A \emph{model function} is a conformal isomorphism
     \[ \Psi: T \to H, \]
   where 
    \begin{itemize}
     \item $T\subset\C$ is an unbounded  simply-connected domain;
     \item $H$ is a simply-connected domain with $\HH\subset H$
      (recall that 
       $\HH := \{\re z > 0\}$ denotes the right half plane);
     \item if $z_n\in T$ is a sequence with $f(z_n)\to\infty$ in $H$, then
            $z_n\to\infty$ in $T$. 
    \end{itemize}   
  \end{defn}   

  \begin{thm}[(Approximation of model functions)]\label{thm:approx}
   Let 
    \[ H := \{x+iy: x>-14 \log_+|y|\}, \]
   where $\log_+(t) := \max(0,\log t)$, and let
   $\Psi:T\to H$ be a model function.

   Set
     $g := \exp\circ\Psi$. Then there exists an entire function
    $f\in\B$ such that    
      \begin{align*}
        f(z) &= g(z) + O\left(\frac{1}{z}\right)\quad
    \text{when $z\in T$,} \quad \text{and} \\
      f(z) &= O\left(\frac{1}{z}\right)
   \quad \text{when $z\notin T$} \end{align*}
   (as $z\to \infty$).
%
%
   If the domain $T$ is symmetric with respect to the real axis and
    $\phi(T\cap\R)\subset\R$, then $f$ can be chosen such that
    $f(\R)\subset\R$.
  \end{thm}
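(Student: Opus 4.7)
The plan is to construct $f$ by the Cauchy integral method, generalizing the Eremenko-Gol'dberg approach that underlies Proposition~7.1 of \cite{strahlen}. Since $\Psi: T \to H$ is a conformal isomorphism between simply connected domains whose boundaries pass through $\infty$, Carath\'eodory's theorem extends $\Psi$ continuously to $\overline T \to \overline H$, so $g = e^{\Psi}$ has continuous boundary values on $\partial T$. Writing $\Psi(\zeta) = x + iy$ on $\partial T$ (so that $x = -14\log_+|y|$), we have $|g(\zeta)| = e^x = \min(1, |y|^{-14})$. Set
\[
 F(z) := \frac{1}{2\pi i}\int_{\partial T} \frac{g(\zeta)}{\zeta - z}\,d\zeta.
\]
Once convergence is established, $F$ is holomorphic on $\C\setminus\partial T$, and the Plemelj-Sokhotski jump relation gives $F_+ - F_- = g$ along $\partial T$. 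Since $g$ extends holomorphically from $\partial T$ into $T$, the piecewise definition
\[
 f(z) := \begin{cases} g(z) - F(z), & z \in T, \\ -F(z), & z \in \C\setminus\overline T \end{cases}
\]
has matching boundary values from either side, hence continues to an entire function. The two approximation claims in the theorem are then equivalent to the estimate $|F(z)| = O(1/|z|)$ as $|z|\to\infty$.

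The heart of the proof is this decay estimate. I would split $\partial T$ into the three pieces $A_1 = \{|\zeta|<|z|/2\}$, $A_2 = \{|\zeta|\in[|z|/2,2|z|]\}$, $A_3 = \{|\zeta|>2|z|\}$. The contributions from $A_1$ and $A_3$ are handled by the elementary bounds $|\zeta-z|\geq|z|/2$ and $|\zeta-z|\geq|\zeta|/2$, provided the global integral $\int_{\partial T}|g(\zeta)|\,d|\zeta|/(1+|\zeta|)$ is finite. The contribution from $A_2$ is more delicate: $|\zeta-z|$ can be small, but $|g|$ is also small there because $|\zeta|$ large forces $|\Psi(\zeta)|$ large, by the properness condition in Definition~\ref{defn:modelfunction}, so $|y|$ is large, so $|g|=|y|^{-14}$ is small. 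Quantifying this requires Koebe-type distortion estimates for $\Psi^{-1}$, relating the rate at which $|\Psi(\zeta)|$ grows with $|\zeta|$ along $\partial T$, and the generous exponent $14$ is tuned precisely to absorb the losses in these distortion estimates. This is the main obstacle and represents the principal improvement over \cite{strahlen}, which required that $\Psi$ extend to a conformal isomorphism onto a sector.

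Finally, to verify $f\in\B$: $f\approx e^{\Psi}$ on $T$, so $|f|$ is large on the interior of $T$, while $f\approx 0$ (hence bounded) on $\C\setminus T$. For $R$ sufficiently large, Rouch\'e's theorem applied to $f - w$ versus $e^{\Psi} - w$ shows that each component of $f^{-1}(\{|w|>R\})$ lies in $T$ and $f$ inherits the universal-covering property of $e^{\Psi}$; hence $\sing(f^{-1})\subset\{|w|\leq R\}$. For the real-symmetric case, if $T$ and $\Psi$ are symmetric under $z\mapsto\bar z$, then so is the integrand defining $F$, yielding $F(\bar z) = \overline{F(z)}$ and hence $f(\R)\subset\R$.
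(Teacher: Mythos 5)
Your high-level plan agrees with the paper's in its key structural elements: define a Cauchy integral, use the jump relation to glue $g-F$ with $-F$ into an entire function, and establish $F(z)=O(1/z)$. But the choice of integration contour contains a genuine gap that the paper's proof is specifically engineered to avoid.

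You propose to integrate over $\partial T$, invoking Carathéodory's theorem for boundary extension of $\Psi$. However, Definition~\ref{defn:modelfunction} places no regularity hypothesis on $\partial T$: $T$ is merely an unbounded simply-connected domain, so $\partial T$ need not be locally connected (hence Carathéodory does not apply) and need not be rectifiable (so the contour integral is not even defined a priori). Even granting that $\partial T$ is a nice curve, you would need quantitative control over the speed of its arc-length parametrization to make the integral converge and to prove the decay estimate — and no such control is available for $\partial T$ directly. The paper sidesteps all of this by integrating not over $\partial T$ but over a curve $\gamma(t)=\Psi^{-1}(it-13\log_+|t|+1)$ lying strictly \emph{inside} $T$. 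Because $\gamma$ is the $\Psi^{-1}$-image of an explicit smooth curve in $H$, it is automatically piecewise smooth, and $|\gamma'(t)|$ can be estimated via the hyperbolic-metric formula $|\gamma'(t)|=|\alpha'(t)|\,|(G^{-1})'(\alpha(t))|\,|\gamma(t)|$ combined with the Ahlfors distortion theorem (Lemma~\ref{lem:hypdist}, Corollary~\ref{cor:deriv}). This is precisely what makes the quantitative hypotheses of Theorem~\ref{thm:cauchy} verifiable.

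The piece you flag as "the main obstacle" (your $A_2$ annulus, where $|\zeta-z|$ can be small) is indeed where the real work lies, and your proposal does not close it — you gesture at "Koebe-type distortion estimates" without supplying them. The paper handles this region by a different mechanism: since $\gamma$ runs inside $T$ where $\re\Psi$ is already $\leq -13\log_+|t|+1$, one has $|g(\gamma(t))|\lesssim|t|^{-13}$ \emph{pointwise on the contour}, and one additionally deforms $\gamma$ to avoid a small disk around the point nearest $z$, using hypothesis (d) of Theorem~\ref{thm:cauchy} to control $|g|$ on that disk. None of this is available if one insists on integrating over $\partial T$, where $|g|$ is only controlled via $|y|^{-14}$ and there is no room to deform. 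Your guess that the exponent $14$ is "tuned to absorb distortion losses" is also not quite right: $13$ is the exponent placed on the contour so that, after the substitution $\tau=t^4$ forced by the Ahlfors estimates, the integrand decays like $|\tau|^{-(2+1/4+1)}$ as required by Theorem~\ref{thm:cauchy}; $14$ is merely $13+1$ so the contour sits strictly inside $H$.

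Your final arguments — Rouché to show $f\in\B$, and symmetry of the integrand to get $f(\R)\subset\R$ — are in the same spirit as the paper's Proposition~\ref{prop:logtract} and symmetry remark, and are essentially fine, conditional on the main estimate.
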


\subsection*{Dynamical approximation} 
 In order to use Theorem \ref{thm:approx} to prove Theorem
  \ref{thm:hypdim}, we observe that the 
  approximation automatically preserves dynamical features.
  The key fact is that, given our quality of approximation, the 
  functions $f$ and $g$ are \emph{quasiconformally
  equivalent near $\infty$} in the sense of \cite{boettcher}:

 \begin{thm}[(Quasiconformal equivalence)] \label{thm:equivalence}
  Let $f$ and $g$ be as in Theorem \ref{thm:approx}, and let $R>0$ be
   sufficiently large.

  Then there exists a quasiconformal homeomorphism $\phi:\C\to\C$ such that
    \[ g(z) = f(\phi(z)) \]
  for all $z\in \C$ with $|g(z)|\geq R$. 

  This map is asymptotically conformal at $\infty$; more precisely,
    \[ \phi(z) = z + O(1) \]
  as $z\to\infty$. 
 \end{thm}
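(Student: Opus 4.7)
The plan is to build $\phi$ out of two pieces. On the ``outer'' part of the tract, where both $f$ and $g$ take large values, $\phi$ will be defined as the natural covering-space lift that solves $f \circ \phi = g$. On the rest of the plane, where no dynamical condition is imposed, $\phi$ will be extended quasiconformally so as to agree with the identity outside a bounded neighbourhood of the tract. This is in the spirit of the quasiconformal-equivalence-near-infinity construction of \cite{boettcher}.

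\textbf{Step 1: Outer region.} I would fix $R$ sufficiently large and set $V_g := \Psi^{-1}(\{w \in H : \operatorname{Re} w > \log R\})$; this is a simply connected subset of $T$ bounded by the analytic arc $\Gamma := \Psi^{-1}(\{\operatorname{Re} w = \log R\})$, and $g = \exp \circ \Psi$ is a universal covering $V_g \to \{|w| > R\}$. From $f - g = O(1/z)$ on $T$ and $f = O(1/z)$ off $T$, for $R$ large the set $\{|f| > R\}$ should have a unique unbounded component $V_f$, lying inside $T$ and close to $V_g$, with $f : V_f \to \{|w| > R\}$ again a universal covering.

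\textbf{Step 2: Lifting.} Since both $g|_{V_g}$ and $f|_{V_f}$ are universal covering maps of simply connected domains onto the punctured disc $\{|w| > R\}$, there is a conformal isomorphism $\phi_0 : V_g \to V_f$ with $f \circ \phi_0 = g$, unique up to deck transformations. I would pin it down by picking a base point $z_0 \in V_g$ of large modulus and requiring $\phi_0(z_0)$ to be the preimage of $g(z_0)$ under $f|_{V_f}$ nearest to $z_0$. Because $g(z_0) - f(z_0) = O(1/z_0)$ and $|f'|$ is large at tract points of high modulus, this nearest preimage is unambiguous and $\phi_0(z_0) - z_0$ is very small; propagating through the covering then gives
\[ \phi_0(z) - z \longrightarrow 0 \qquad (z \to \infty \text{ in } V_g), \]
so that $\phi_0$ extends continuously to $\Gamma$ with boundary values close to the identity.

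\textbf{Step 3: Quasiconformal extension.} Let $\Omega := \C \setminus \overline{V_g}$, a simply connected domain with boundary $\Gamma$. I would extend $\phi$ to $\C$ by setting $\phi := \phi_0$ on $\overline{V_g}$ and interpolating, inside a fixed one-sided collar of $\Gamma$ in $\Omega$, between the boundary values of $\phi_0$ on $\Gamma$ and the identity further out, with $\phi = \operatorname{id}$ outside the collar. Working in the uniformising coordinate $w = \Psi(z)$, the boundary $\Gamma$ becomes a vertical line inside $H$ and the interpolation reduces to extending a homeomorphism of that line which differs from the identity by $o(1)$; a standard explicit interpolation has uniformly bounded qc dilatation, and when transported back through $\Psi^{-1}$ it yields a quasiconformal $\phi : \C \to \C$ satisfying $f \circ \phi = g$ on $\{|g| \geq R\}$ and $\phi(z) = z + O(1)$.

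The main technical difficulty will be Step 3: one must verify both that the interpolation has uniformly bounded dilatation (so that $\phi$ really is quasiconformal) and that the resulting map remains a global homeomorphism of $\C$. The first should follow from the smallness of $\phi_0 - \operatorname{id}$ on $\Gamma$; the second from choosing the collar narrow enough, relative to the local geometry of $T$ near infinity, that it remains disjoint from its image under this small boundary perturbation.
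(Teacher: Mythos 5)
Your proposal follows the paper's argument essentially verbatim: lift $g$ through the universal covering $f|_{V_f}$ to define the conjugacy on the outer part of the tract, then extend quasiconformally to the identity across a collar of $\Gamma$ by interpolating in the uniformising coordinate --- this is precisely what the paper does via Lemma~\ref{lem:qcstrip} and Theorem~\ref{thm:Blogequivalence} (working in the logarithmic coordinate $G=\Psi\circ\exp$, which is equivalent to your $\Psi$-coordinate up to the additional $2\pi i$-periodicity bookkeeping), and the technical worry you flag at the end --- injectivity and dilatation control for the interpolation --- is exactly what the slope argument in Lemma~\ref{lem:qcstrip} handles. One small correction to Step 2: the estimate $|f-g|=O(1/z)$ translates to $|F-G|\leq Ce^{-\re z}$ in logarithmic coordinates, and pushing forward gives only $\phi_0(z)=z+O(1)$ in plane coordinates (as the theorem states), not $o(1)$; the displacement does tend to zero in the $\Psi$-coordinate, which is all the interpolation lemma actually needs, so this overstatement is harmless.
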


 By \cite{boettcher}, this implies that the functions are quasiconformally
  conjugate on the set of points whose orbits
  stay suitably large under iteration. In our setting 
  we can even be sure (adapting ideas from \cite{boettcher}) 
  to obtain a \emph{global} conjugacy on the Julia sets of the two functions,
  provided that  
  the tract $T$ is sufficiently well inside the domain $\{|z|>1\}$. 

 \begin{thm}[(Quasiconformal conjugacy)]\label{thm:conjugacy}
  There is a universal constant $\rho_0>1$ with the following
   property.

  Let $\Psi:T\to H$ be as in
   Theorem \ref{thm:approx}, with the additional property that
   $T\subset \{|z|>\rho_0\}$.

  Then, again setting $g:=\exp\circ\Psi$,
   the function $f$ in Theorem 
   \ref{thm:approx} can be chosen 
   such that $f$ and $g$ are quasiconformally conjugate
   near their Julia sets. 

  More precisely, there is a quasiconformal
   homeomorphism $\theta:\C\to\C$ such that 
     \[ \theta(g(z)) = f(\theta(z)) \]
    whenever $|g(z)|\geq \rho_0$, and $\theta$ restricts to a homeomorphism
    between the Julia set $J(g)$
    (i.e., the set of points that remain in $T$ under iteration of $g$)
    and the Julia set of $f$. 
    Furthermore,
    the complex dilatation of $\theta$ equals zero almost everywhere on
    $J(g)$. 
 \end{thm}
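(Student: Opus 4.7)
My plan is to upgrade the quasiconformal equivalence $\phi$ supplied by Theorem \ref{thm:equivalence} into a true quasiconformal conjugacy through an iterative lifting procedure, in the spirit of \cite{boettcher}. First apply Theorems \ref{thm:approx} and \ref{thm:equivalence} to obtain $f$ and $\phi:\C\to\C$ with $g(z)=f(\phi(z))$ whenever $|g(z)|\geq R$ and $\phi(z)=z+O(1)$. Choose $\rho_0>R$ sufficiently large (this will be the universal constant) so that $\phi(T)$ lies in a single tract $T_f$ of $f$, a distinguished branch $F_*^{-1}:\{|w|\geq\rho_0\}\to T_f$ of $f^{-1}$ is well defined, and the inclusion $F_*^{-1}(\{|w|\geq\rho_0\})\hookrightarrow\{|w|\geq\rho_0\}$ is strictly contracting for the hyperbolic metric, with contraction factor uniformly bounded away from $1$.

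Set $\theta_0:=\phi$ and inductively define
\[ \theta_{n+1}(z) := F_*^{-1}\bigl(\theta_n(g(z))\bigr) \]
whenever $g(z)$ is defined with $|\theta_n(g(z))|\geq\rho_0$, and $\theta_{n+1}(z):=\theta_n(z)$ otherwise. A direct induction gives $\theta_n=F_*^{-n}\circ\phi\circ g^n$ on the relevant set, so each $\theta_n$ is $K$-quasiconformal with the same $K$ as $\phi$ and $|\mu_{\theta_n}(z)|=|\mu_\phi(g^n(z))|$ (pre- and post-composition by holomorphic maps preserves $|\mu|$). The hyperbolic contraction makes the differences $|\theta_{n+1}-\theta_n|$ decay geometrically on compacta, so $\theta_n$ converges uniformly on compact subsets of $\C$ to a continuous limit $\theta$. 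Passing to the limit in the recursion yields the functional equation $\theta(g(z))=f(\theta(z))$ for $|g(z)|\geq\rho_0$, and a standard normal-family argument, combined with the control $\phi(z)=z+O(1)$, identifies $\theta$ as a $K$-quasiconformal homeomorphism of $\C$.

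Since $T\subset\{|z|>\rho_0\}$, every $z\in J(g)$ satisfies $|g^n(z)|>\rho_0$ for all $n$, so the conjugacy applies along the full forward orbit of $z$; this transports the dynamics of $g$ on $J(g)$ to the dynamics of $f$, giving $\theta(J(g))=J(f)$. Finally, for $z\in J(g)$ the orbit $g^n(z)$ escapes deep into the tract, where $\mu_\phi$ decays by the asymptotic conformality of $\phi$ at $\infty$ (built into the construction behind Theorem \ref{thm:equivalence}, where $\mu_\phi$ is essentially supported in a bounded set or dies off along the tracts). Hence $|\mu_\theta(z)|=\lim_n|\mu_\phi(g^n(z))|=0$, proving the vanishing of the dilatation on $J(g)$.

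The main obstacle is the careful geometric setup at the beginning: fixing a \emph{universal} $\rho_0$ large enough to simultaneously ensure (a) all relevant inverse branches of $f$ exist and map into the correct tract, (b) the hyperbolic contraction is uniform and sufficient to dominate the $O(1/z)$ error term coming from Theorem \ref{thm:approx}, and (c) orbits in $J(g)$ remain uniformly far inside the tract, making $\mu_\phi$ genuinely negligible along them. Once these geometric points are handled, the convergence, conjugacy, and dilatation statements reduce to essentially standard telescoping and normal-family arguments adapted from \cite{boettcher}.
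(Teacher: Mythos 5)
Your overall strategy---iteratively pulling back the quasiconformal equivalence $\phi$ from Theorem \ref{thm:equivalence} by defining $\theta_{n+1}=F_*^{-1}\circ\theta_n\circ g$ and passing to a limit---is the same as the paper's (the paper carries it out in logarithmic coordinates via $\BlogP$, proving Theorem \ref{thm:Blogconjugacy} first, but the content is identical). Two specific points, though, need correction.

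First, the argument for vanishing dilatation on $J(g)$ is wrong as stated. You write that for $z\in J(g)$ ``the orbit $g^n(z)$ escapes deep into the tract, where $\mu_\phi$ decays by the asymptotic conformality of $\phi$ at $\infty$.'' But points of $J(g)$ need not escape to infinity: indeed, the hyperbolic sets constructed in Section \ref{sec:hypdim}, which are the entire point of Theorem \ref{thm:hypdim}, consist of points with \emph{bounded} orbits inside $J(g)$. Asymptotic conformality at $\infty$ therefore gives you nothing for such $z$. The correct mechanism is simpler and does not involve decay at $\infty$: on the region $\{|g(z)|\geq R\}$, the equivalence $g=f\circ\phi$ forces $\phi$ to coincide with the holomorphic branch $f^{-1}\circ g$, so $\mu_\phi\equiv 0$ there. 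Since $T\subset\{|z|>\rho_0\}$ with $\rho_0\geq R$, orbits of $z\in J(g)$ satisfy $|g^{n+1}(z)|\geq\rho_0\geq R$ for all $n$, whence $\mu_{\theta_n}(z)=|\mu_\phi(g^n(z))|=0$ for every $n$, and the limit has zero dilatation a.e.\ on $J(g)$. Equivalently: each $\theta_n$ is conformal on a neighborhood of $J(g)$, which is the observation the paper uses.

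Second, the convergence argument via ``geometric decay on compacta'' glosses over the key structural point: the maps $\theta_n$ and $\theta_{n+1}$ actually \emph{agree} outside the $n$-th pullback of the tract, so they stabilize pointwise on the dense open set $\C\setminus J(g)$. The paper combines this with normality of uniformly quasiconformal maps to obtain the limit $\theta$; you also need Royden's glueing lemma (which you omit) to ensure each pieced-together $\theta_n$ is genuinely quasiconformal with the same dilatation bound. Your telescoping estimate via hyperbolic contraction could in principle be made rigorous, but the paper's stabilization-plus-normal-family route is cleaner and what you should cite.
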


In order to prove Theorem \ref{thm:hypdim}, it will thus be sufficient
  to construct
  a model function $\Psi:T\to H$ for which the map $g := \exp\circ\Psi$ contains
  hyperbolic subsets of dimension arbitrarily close to $2$ and which satisfies
  the hypotheses of Theorem \ref{thm:conjugacy}. (Recall that quasiconformal
  mappings preserve sets of Hausdorff dimension $2$.)

\subsection*{Further applications}
  Let us state two further new theorems that can be obtained
  from our approximation results via known constructions. (We 
  refer to the articles in question for background on the
  questions answered by these examples.) The first
  is a strengthening of \cite[Theorems 8.2 and 8.3]{strahlen}.

 \begin{thm}[(Counterexamples of low growth to the strong Eremenko conjecture)]
   \label{thm:eremenkolowgrowth}
  There exists a disjoint-type transcendental entire function $f\in\B$ such that
  \begin{enumerate}[(a)]
    \item $\displaystyle{\log_+ \log_+ |f(z)| = (\log_+|z|)^{1+o(1)}}$ as 
         $z\to\infty$,
   \item $f$ has lower order $1/2$, and
   \item the Julia set $J(f)$ has no unbounded path-connected components.
      \label{item:eremenko}
  \end{enumerate}   
 \end{thm}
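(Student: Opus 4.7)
The plan is to construct a suitable model function $\Psi: T \to H$ (with $H$ as in Theorem \ref{thm:approx}) such that $g := \exp\circ\Psi$ satisfies analogues of properties (a), (b), (c) of the statement, and then apply Theorem \ref{thm:conjugacy} to pass to the approximating entire function $f \in \B$. Since the approximation error in Theorem \ref{thm:approx} is only $O(1/z)$, the growth condition (a) and the lower order (b) transfer directly from $g$ to $f$; and since topological properties of the Julia set are quasiconformal invariants, (c) transfers via the quasiconformal conjugacy of Theorem \ref{thm:conjugacy}. Disjoint type is arranged by placing $T \subset \{|z| > \rho_0\}$ so that the singular values of $g$ are contained in the Fatou component of $\infty$, which is automatic for a suitable placement of $T$.

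The construction of $\Psi$ follows the strategy of \cite[Section 8]{strahlen}, but exploits the much weaker hypothesis of Theorem \ref{thm:approx}. In \cite{strahlen}, the available approximation result required $\Psi$ to extend to a conformal isomorphism onto a sector of opening strictly greater than $\pi$, which forced the corresponding tract to be contained in a sector of opening less than $2\pi$ and hence the lower order of the resulting function to be strictly larger than $1/2$. With the new Theorem \ref{thm:approx}, the target $H$ need only be slightly larger than $\HH$, so $T$ may be taken to be (a modification of) the slit plane $\C \setminus \R_{-}$. The Riemann map $\Psi: T \to H$ then differs from the principal branch of $\sqrt{z}$ by a bounded perturbation, so $|g(z)| \asymp \exp(\sqrt{|z|})$ in the bulk of $T$; this gives lower order exactly $1/2$ for (b), and, with care on the rate of the perturbation, the growth estimate (a).

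To realize (c), we perturb $\partial T$ by inserting a sequence of zigzag oscillations accumulating at $\infty$, following \cite[Section 8]{strahlen}. Koebe-type distortion estimates ensure that $\Psi$ still behaves essentially like $\sqrt{z}$, so that (a) and (b) survive the perturbation. The zigzag structure creates a sequence of narrow bottlenecks along the tract; any hypothetical unbounded path in $J(g)$ would have to thread through infinitely many of these bottlenecks in a coherent way, which can be ruled out essentially as in \cite[Proof of Theorem 8.2]{strahlen}. The main obstacle is balancing the three conditions: (b) forces $T$ to be essentially a slit plane, while (c) requires a delicate oscillatory structure on $\partial T$; the new flexibility of Theorem \ref{thm:approx}, which admits essentially arbitrary shapes for $T$, is precisely what allows both to be realized simultaneously.
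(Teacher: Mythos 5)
Your overall plan---construct a model $\Psi:T\to H$ with the three properties, approximate via Theorem~\ref{thm:approx}, conjugate via Theorem~\ref{thm:conjugacy}---matches the paper's. The paper in fact offers two realizations: either rebuild the construction of \cite[Theorem 8.3]{strahlen} with target $H$ in place of $\HH$, or (more economically) take the model $\Psi:T\to\HH$ already constructed there and precompose with a conformal isomorphism $\Theta:\HH\to H$, then check that the asymptotics of $\Theta$ preserve (a) and (b) and that the path-component argument of \cite{strahlen} carries over. Yours is the first of these.

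However, your description of the model contains a genuine inconsistency. You assert that $\Psi$ ``differs from the principal branch of $\sqrt{z}$ by a bounded perturbation'' and that after the zigzags are inserted, Koebe distortion guarantees $\Psi$ ``still behaves essentially like $\sqrt{z}$.'' If $|\Psi(z)-\sqrt{z}|=O(1)$ held on $T$, then $\log M(r,g)=\sup_{|z|=r,\,z\in T}\re\Psi(z)=\sqrt{r}+O(1)$, so $g$---and hence $f$---would have finite order exactly $1/2$. But, as Remark~1 after the theorem's statement recalls, no function $f\in\B$ of finite upper order can satisfy~(c). A tract for which $\Psi=\sqrt{z}+O(1)$ uniformly therefore cannot possibly destroy all unbounded path-components of the Julia set, no matter how intricately $\partial T$ oscillates. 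The whole point of the zigzag construction in \cite[Section 8]{strahlen} is that the pinching must drive the deviation of $\Psi$ from $\sqrt{z}$ to be \emph{unbounded} (forcing infinite upper order, which (c) requires), while simultaneously this divergence must be slow enough to keep $\log_+\log_+|f|=(\log_+|z|)^{1+o(1)}$ and the lower order at $1/2$. That tension between (a)/(b) on one side and (c) on the other is the actual content of the construction, and your sketch does not engage with it: ``bounded perturbation'' and Koebe distortion alone cannot supply the mechanism.
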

 \begin{remark}[Remark 1]
  We recall that any function $f\in\B$ must have lower order at least $1/2$
   and that no function $f\in\B$ of finite (upper) order can satisfy
   (\ref{item:eremenko}) by \cite{strahlen}. 
 \end{remark}
 \begin{remark}[Remark 2]
  The condition on the growth of $f$ implies that
   $J(f)$ has Hausdorff dimension equal to two 
   \cite{bergweilerkarpinskastallard}.
 \end{remark}

 Our second application strengthens a counterexample from 
   \cite{devaneyhairs}.

 \begin{thm}[(Slowly escaping Devaney hairs)] \label{thm:devaney}
  There exists a disjoint-type transcendental entire function $f\in\B$ such that
  \begin{enumerate}[(a)]
   \item $f(\R)\subset\R$ and $J(f)\cap \R=[a,\infty)$ for some $a>0$;
   \item every connected
    component of $J(f)$ is an arc connecting some finite
    endpoint to infinity, and $[a,\infty)$ is such a component;
   \item every $x>a$ belongs to the \emph{escaping set}
     $I(f) = \{z\in\C: f^n(z)\to\infty\}$;
   \item the real axis does not intersect the \emph{fast escaping set}
     $A(f)$ of points that escape to infinity ``as fast as possible'' in 
     the sense of Bergweiler and Hinkkanen
     \cite{bergweilerhinkkanen}. 
  \end{enumerate}
 \end{thm}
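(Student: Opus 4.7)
The plan is to follow the slow-escape construction from \cite{devaneyhairs}, but using Theorem \ref{thm:conjugacy} in place of the weaker approximation result available to the authors there. The heart of the matter is to design a symmetric model function $\Psi : T \to H$ for which the orbit of $g := \exp\circ\Psi$ along the positive real axis escapes to $\infty$ but grows dramatically more slowly than the iterated maximum modulus of $g$.

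First I would choose an unbounded Jordan domain $T \subset \{|z| > \rho_0\}$, symmetric with respect to $\R$, with $T \cap \R = (a,\infty)$ for some $a > \rho_0$, and designed so that $T$ is comparatively narrow near the real axis but bulges out much further above and below. The Riemann mapping theorem yields a conformal isomorphism $\Psi : T \to H$ respecting the $z \mapsto \bar z$ symmetry, so in particular $\Psi(T \cap \R) \subset \R$. A careful choice of shape (for example, $T$ bounded by curves $|y| = h(|x|)$ with $h$ growing fast enough that $\int^\infty\!dt/h(t)$ still diverges, but only slowly) makes $\Psi(x)$ tend to $\infty$ very slowly on the positive real axis, say with $\Psi(x) = O((\log x)^{1+\eps})$, while leaving enough room for $\re \Psi$ to attain much larger values off the axis on $\{|z| = r\}$. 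By adjusting $a$ (or pre-composing $\Psi$ with a translation) one may arrange $g(x) > x$ for every $x > a$, so that the ray $(a,\infty)$ is forward invariant under $g$ and every real orbit tends monotonically to $+\infty$.

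Next I would apply Theorem \ref{thm:conjugacy}, combined with the symmetry clause of Theorem \ref{thm:approx}, to obtain a disjoint-type $f \in \B$ with $f(\R) \subset \R$ and a quasiconformal homeomorphism $\theta : \C \to \C$ conjugating $g$ and $f$ near the Julia set; by running the construction equivariantly with respect to complex conjugation one arranges $\theta(\R) \subset \R$. Property~(a) is then immediate from $J(g) \cap \R = [a,\infty)$; property~(b) follows from the topological description of the Julia set for disjoint-type finite-order maps in $\B$ recalled in the introduction (\cite{baranskihyperbolic,brush,boettcher,strahlen}); and property~(c) follows from the fact that for disjoint-type $f \in \B$ every orbit that remains in a tract escapes, combined with the forward invariance and monotonicity on $(a,\infty)$.

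The principal obstacle is part~(d). One must show that for every $R > 0$ and every $x > a$ one has $|f^n(x)| < M^n(R)$ infinitely often, where $M(r) = \max_{|z|=r}|f(z)|$. Along the real axis, $\log|f(x)| = \Psi(x) + O(1/x)$ grows at most like $(\log x)^{1+\eps}$ by construction, whereas $\log M(r)$ is bounded below by $\max\{\re \Psi(z) : z \in T,\ |z| = r\}$, which the off-axis geometry of $T$ forces to be at least $r^\beta$ for some $\beta > 0$. An Ahlfors-distortion or extremal-length estimate is the natural tool for turning this on-axis/off-axis asymmetry into a quantitative gap, and iterating the gap shows that the real orbit cannot keep pace with $M^n(R)$, so $x \notin A(f)$ in the sense of \cite{bergweilerhinkkanen}. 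Assembling (a)--(d) completes the proof.
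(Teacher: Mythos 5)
Your overall strategy (build a real-symmetric model, approximate via Theorem \ref{thm:conjugacy} using the symmetry clause of Theorem \ref{thm:approx}, then transfer (a)--(d) through the conjugacy) is sound, and is indeed the strategy the paper uses. But the specific model you propose cannot produce part (d), and this is where the real work of the theorem lies. If $T$ is a monotone tract of the form $\{x+iy : x>a,\ |y|<h(x)\}$, then by symmetry and monotonicity the level curves $\{\re\Psi = c\}$ are cross-cuts of $T$ that meet $\R$ perpendicularly and flare outward as $h$ increases; consequently the point of the arc $\{|z|=r\}\cap T$ with the largest value of $\re\Psi$ is the real point $z=r$ itself. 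Hence $\max_{|z|=r}|g(z)| \asymp g(r)$, so the maximum modulus on circles grows at exactly the rate of the real orbit, and there is no "on-axis/off-axis asymmetry" for an Ahlfors-distortion estimate to exploit. The same objection applies whether $h$ grows like $x/\log x$, $x$, or $x\log x$: a symmetric widening corridor always realizes its maximum modulus on the axis of symmetry. Your proposed sharp dichotomy ($\Psi(x)=O((\log x)^{1+\eps})$ on $\R$ versus $\re\Psi\gtrsim r^\beta$ somewhere on $\{|z|=r\}$) simply cannot hold for such $T$, so the crucial conclusion "$x\notin A(f)$" is not established.

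The paper's own proof does not build a new model at all; instead it takes the tract $T$ already constructed in logarithmic coordinates in \cite[Section 6]{devaneyhairs} (which has a far more intricate shape than a widening corridor, controlled by inductively chosen parameters $\eps_k, u_k, r_k, w_k, \theta_k$) and checks that the only two steps of the argument in \cite[Proposition 6.3]{devaneyhairs} that rely on the explicit hyperbolic metric of $\HH$ can be replaced: one by the two-sided estimate $1/t\geq\rho_H(t)\geq 1/(2t)$ and the identity $\dist_H(1,F(u))=\dist_T(1,u)$, the other by the qualitative version of Lemma \ref{lem:metricH} giving a function $\eta(R_0,r)$ with $\eta(R_0,r)\to\infty$ as $R_0\to\infty$. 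That is, the new content here is not the model (which is imported wholesale) but the observation that the construction is robust under replacing $\HH$ by the domain $H$ required by Theorem \ref{thm:approx}. A self-contained proof along your lines would need to redesign $T$ so that on many scales the circle $\{|z|=r\}$ meets $T$ at points conformally much deeper than $r\in\R$; you should also verify finite order before invoking the brush/hair structure for part (b), as that step is not automatic from disjoint type alone.
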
 

 We also note that the same construction as in the proof of Theorem
   \ref{thm:hypdim}, with different parameters, suggests a counterexample
   to the \emph{area conjecture} of Epstein and Eremenko. However, while this construction
   yields a counterexample in the class of model functions, our approximation result does not
   allow us to construct such a counterexample in the class $\B$.
   This application and its background is discussed in \cite{area}.

\subsection*{Some remarks about the proof}
 As already mentioned, the proof of Theorem \ref{thm:approx} uses
  Cauchy integrals. More precisely, let $\gamma:(-\infty,\infty)\to T$
  be defined by
    \[ \gamma(t) := \Psi^{-1}(it -13\log_+|t| + 1 ) \]
  and consider the function
    \[ h(z) := \frac{1}{2\pi i}\int_{\gamma} \frac{g(\zeta)}{\zeta-z}d\zeta. \]
  We will show that the integral converges absolutely for
   $z\notin \gamma$ and that
    \[ h(z) = O\left(\frac{1}{z}\right) \]
   as $z\to\infty$. It then follows that 
    \[ f(z) := \begin{cases}
                h(z)+g(z) & z\in\tilde{T} \\
                 h(z) & z\notin \tilde{T} \end{cases} \]
   is the desired entire function, where
   $\tilde{T}$ is the component of $\C\setminus \gamma$ that is contained
   in $T$. 

 We should comment that the constant $14$ appearing in the definition
  of $H$ is not best possible: Our proof shows
  that it can be replaced by any constant that is larger than $13$, and
  with some more careful estimates, it
  could be reduced further. However, our proof does not yield 
  the analog of Theorem \ref{thm:approx}
  for a domain of the form
    \[ H := \{x+iy: x>-\eps \log_+|y|\}, \]
   where $\eps>0$ is arbitrarily small.

  We also note that, for the application in Theorem \ref{thm:hypdim}, it is important that the domain $H$ is of the
   form as above (compare Remark 1 after the proof of Theorem
    \ref{thm:hypdiminproof}): e.g.\ it would not be sufficient to be able to approximate functions
   $\Psi:T\to H$, where
     \[ H = \{x + iy: x > -|y|^{\eps}\}. \]

\subsection*{Subsequent results}
Motivated by our results, Chris Bishop \cite{bishopclassB} has recently given a complete answer to Question \ref{qu:approx}, if one considers
  quasiconformal equivalence instead of uniform approximation: If $\Psi:T\to H$, then there is a function $f\in\B$, with a single tract,
  that is quasiconformally
  equivalent to $\exp\circ\Psi$ near $\infty$. Furthermore, there is a function $f\in\S$ such that a suitable restriction of $f$ is quasiconformally equivalent to
  $\exp\circ\Psi$. (In general, $f$ cannot be constructed to have only a single tract.) These results even hold for arbitrary unions of tracts
  that accumulate only at infinity. In particular, Bishop's methods allow the construction 
  of a counterexample to the area conjecture mentioned above, 
  in the class $\S$, and indeed a counterexample \cite{bishoporder} to the stronger
  \emph{order conjecture} of Adam Epstein, which asked whether the order of a transcendental 
  entire function $f\in\S$ is invariant under quasiconformal equivalence. 

\subsection*{Structure of the article}
  The first part of the paper deals with approximation and the proof of Theorem
   \ref{thm:approx}.
   In Section \ref{sec:cauchy}, we prove a technical result about the 
   approximation of holomorphic functions using Cauchy integrals. (This 
   covers a number of known constructions.) In Section
   \ref{sec:hyperbolicmetric}, we collect some basic facts about hyperbolic
   geometry in plane domains; these are used in Section \ref{sec:approx} to
   prove Theorem \ref{thm:approx} in a slightly more general framework, using
   the results from Section \ref{sec:cauchy}. The
   short Section \ref{sec:initial} is dedicated to verifying that our 
   hypotheses in Theorem \ref{thm:approx} indeed satisfy the assumptions
   used in Section \ref{sec:approx}.

 The second part of the paper consists of Section \ref{sec:conjugacy}, which
   establishes the results on quasiconformal equivalence and conjugacy.

 Finally, Section \ref{sec:hypdim} constructs the model function required
   for the proof of Theorem \ref{thm:hypdim}, while Section
   \ref{sec:examples} briefly discusses Theorems \ref{thm:eremenkolowgrowth} and
   \ref{thm:devaney}.

  We remark that the three parts of the paper can be read quite independently
    of each other (with the exception that the hyperbolic metric estimates of Section
    \ref{sec:hyperbolicmetric} will be used throughout). 

\subsection*{Acknowledgments}
  I owe great thanks to Alexandre Eremenko, who introduced me to the method
   of approximation via Cauchy integrals by pointing me to the paper
   \cite{eremenkogoldberg}, and who has shared many profound insights on this
   and related problems. I would also like to thank Adam Epstein, 
   who led me to think about the area conjecture and
   to discover the basic structure of the example in
   Theorem \ref{thm:hypdim}, and Peter Hazard, stimulating conversations with 
   whom resulted in the realization that this example could be adapted to yield
   functions with full hyperbolic dimension. Finally, I would like to thank Chris Bishop, 
   Helena Mihaljevi\'c-Brandt, Phil Rippon, Gwyneth Stallard and Mariusz
   Urba\'nski for interesting discussions about this work.

\subsection*{Basic notation}
 As usual, we denote by $\C$ the complex plane. We also denote the
  right half plane by 
   \[ \HH := \{a+ib: a>0, b\in\R\} \]
  and the (Euclidean)
  disk of  radius $r$ around a point $z_0\in\C$ by
   \[ \D_r(z_0) := \{z\in\C: |z-z_0|<r\}. \]
  Euclidean distance is denoted $\dist$; e.g.\ $\dist(A,z_0)$ is the Euclidean distance between a set $A\subset\C$ and the point $z_0$.  

 As mentioned above, we set $\log_+(t) := \max(0,\log(t))$ for
   $t\geq 0$. We also define
   \[ |z|_+ := \max(|z|,1) = \exp(\log_+|z|) \]
  for all $z\in\C$.

 If $f:\C\to\C$ is a transcendental entire function, we denote by
  $\sing(f^{-1})$ its set of critical and asymptotic values.
  (Here $a$ is an asymptotic value if there is a curve $\gamma:[0,\infty)\to \C$ with
   $\gamma(t)\to\infty$ and $f(\gamma(t))\to a$ for $t\to\infty$.) The closure
   of $\sing(f^{-1})$ (in $\C$) is denoted $S(f) := \cl{\sing(f^{-1})}$. An alternative
   definition of $S(f)$, which is the one we will be using, is as the
   smallest closed set that has the property that
    \[ f:\C\setminus f^{-1}(S(f)) \to \C\setminus S(f) \]
   is a covering map.

\section{Approximation using Cauchy integrals}\label{sec:cauchy}

 In this section, we prove a general technical
  result about the approximation of
  holomorphic functions by Cauchy integrals. In Section \ref{sec:approx},
  this will be used
  to deduce our main approximation theorem (Theorem \ref{thm:approx}).

\begin{thm}[(Convergence of Cauchy integrals)] \label{thm:cauchy}
  Let $T\subset\C$ be a simply-connected domain and let
   $g:T\to\C$ be holomorphic. Let
   $\gamma:(-\infty,\infty)\to T$ be an injective and piecewise smooth
   curve such that $|\gamma(t)|\to\infty$ as $|t|\to\infty$, and let
   $\tilde{T}\subset T$ be the component of $\C\setminus\gamma$ that is contained 
   in $T$. We assume that $\gamma$ runs around $\tilde{T}$ in clockwise direction.

  Suppose furthermore that there are constants $C_1, \dots, C_5\geq 1$ 
   and $\delta_1,\delta_2\geq 0$ such that the following hold for 
   all $\tau\in\R$ (recall that $|\tau|_+=\max(|\tau,1|)$): 
  \begin{enumerate}[(a)]
   \item $|\gamma(\tau)|\leq C_1\cdot |\tau|_+$,
     \label{item:gamma}
   \item $|\gamma'(\tau)|\leq C_2\cdot |\tau|_+^{\delta_1}$,
     \label{item:gamma'}
   \item $|g(\gamma(\tau))|\leq C_3\cdot |\tau|_+^{-(2+\delta_1+\delta_2)}$, and
     \label{item:g}
   \item if $|z-\gamma(\tau)|\leq |\tau|_+^{-\delta_2}/C_4$, then
       $z\in T$ and $|g(z)|\leq C_5\cdot |\tau|_+^{-1}$. 
     \label{item:disk}
  \end{enumerate}

 Then 
    \begin{equation} \label{eqn:defnh}
     h(z) := \frac{1}{2\pi i}\int_{\gamma} \frac{g(\zeta)}{\zeta-z}d\zeta 
    \end{equation}
   defines a holomorphic function for $z\notin \gamma$, and 
    \[ f(z) := \begin{cases}
                h(z)+g(z) & z\in\tilde{T} \\
                 h(z) & z\notin \tilde{T} \end{cases} \]
   extends to an entire function $f:\C\to\C$. 

  Furthermore, there is a constant $C_6$ such that
    \[ |h(z)| \leq \frac{C_6}{|z|_+}, \]
   where $C_6$ depends only on $C_1,\dots,C_5$; more precisely,
   $C_6=O(C_1\cdot C_2\cdot C_3\cdot C_4\cdot C_5)$. 
\end{thm}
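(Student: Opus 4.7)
The plan is to establish the three conclusions---convergence and holomorphicity of $h$ off $\gamma$, extension of $f$ to an entire function, and the decay $|h(z)|\leq C_6/|z|_+$---in that order. Hypotheses (b) and (c) combine to give $|g(\gamma(\tau))\gamma'(\tau)|\leq C_2C_3\,|\tau|_+^{-(2+\delta_2)}$, which is absolutely integrable on $\R$, while $|\gamma(\tau)-z|$ is bounded below on compacta in $\C\setminus\gamma$ by properness of $\gamma$; so $h$ is holomorphic off $\gamma$ by the standard Morera argument. For entireness of $f$, I verify the Sokhotski-Plemelj jump $h_+(\zeta_0)-h_-(\zeta_0)=g(\zeta_0)$ at every smooth point $\zeta_0$ of $\gamma$ by splitting $\gamma$ into a short sub-arc around $\zeta_0$ and its complement, and writing $g(\zeta)=g(\zeta_0)+(g(\zeta)-g(\zeta_0))$ on the sub-arc. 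Since $\gamma$ winds clockwise around $\tilde T$, its ``negative'' side is $\tilde T$, so the boundary values of $h+g$ from $\tilde T$ match those of $h$ from outside; Morera, together with removability at the corners of $\gamma$, then gives $f$ entire.

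The decay estimate is the heart of the proof. I would split $\C$ via the ``holomorphy tube''
\[
\mathcal{T} := \bigcup_{\tau\in\R} D\bigl(\gamma(\tau),\,|\tau|_+^{-\delta_2}/C_4\bigr),
\]
on which hypothesis (d) gives $|g|\leq C_5/|\tau|_+$. If $z\notin\mathcal{T}$ then $|\gamma(\tau)-z|\geq |\tau|_+^{-\delta_2}/C_4$ for every $\tau$. I split the $\tau$-integral at $|\tau|=|z|/(2C_1)$: below the threshold, (a) gives $|\gamma(\tau)-z|\geq |z|/2$, which together with the $|\tau|_+^{-(2+\delta_2)}$ decay yields a contribution of order $1/|z|_+$; above the threshold, the tube bound combined with the same decay gives $C_4\int_{|\tau|>|z|/(2C_1)}|\tau|_+^{-2}\,d\tau$, again of order $1/|z|_+$.

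If $z\in\mathcal{T}$, fix $\tau_0$ with $|z-\gamma(\tau_0)|\leq r_0:=|\tau_0|_+^{-\delta_2}/C_4$; then (a) forces $|z|_+$ and $|\tau_0|_+$ to be comparable, and (b) shows that on the window $|\tau-\tau_0|\leq \eta:=r_0/(4C_2|\tau_0|_+^{\delta_1})$ the curve $\gamma$ stays in $D(\gamma(\tau_0),r_0/2)\subset\mathcal{T}$. Cauchy's theorem inside $D(\gamma(\tau_0),r_0)$ then lets me replace $\gamma$ on this window by a short arc $\tilde\gamma$ kept at distance at least $r_0/4$ from $z$, at the cost of at most one residue $\pm g(z)=O(1/|z|_+)$; using $|g|\leq C_5/|\tau_0|_+$ and length $\leq 3r_0$ on $\tilde\gamma$, the deformed piece contributes order $1/|z|_+$. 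The integral over $\{|\tau-\tau_0|>\eta\}$ is handled by the previous case wherever $z\notin D(\gamma(\tau),|\tau|_+^{-\delta_2}/C_4)$, and by the same local-deformation device at each further tube re-entry. The main obstacle is to run this consistently when the set $\{\tau:z\in D(\gamma(\tau),|\tau|_+^{-\delta_2}/C_4)\}$ has several components---i.e., when $\gamma$ makes several near-approaches to $z$---and to show that the aggregate residues and deformed-arc integrals still sum to $O(1/|z|_+)$; properness of $\gamma$ together with (a) should bound the number of such components in terms of $|z|$-independent data, but tracking the final multiplicative dependence of $C_6$ on $C_1,\dots,C_5$ requires careful bookkeeping.
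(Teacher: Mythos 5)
Your handling of convergence, holomorphicity off $\gamma$, and the Sokhotski--Plemelj jump argument for extending $f$ across $\gamma$ is fine (the paper uses the residue theorem after a local deformation instead, but both are standard and correct). Your estimate when $z$ lies outside the tube $\mathcal{T}$ also matches the paper's splitting at $|\tau|\approx |z|/(2C_1)$. The genuine gap is in the case $z\in\mathcal{T}$, and you have in fact correctly identified where it is: you have no control over how many components the set $\{\tau:z\in D(\gamma(\tau),|\tau|_+^{-\delta_2}/C_4)\}$ has, and hypotheses (a)--(d) do \emph{not} imply a bound on this number---the curve is only assumed injective and proper, so it can oscillate across a small neighbourhood of $z$ arbitrarily often. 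Your closing assertion that properness plus (a) ``should bound the number of such components'' is not justified and, as far as I can see, is not true; so the plan of deforming around each tube re-entry and summing up residues does not close.

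The missing idea is to avoid counting re-entries altogether by choosing $\tau_0$ with $|z-\gamma(\tau_0)|\leq \delta(\tau_0)/2$ and $|\tau_0|$ \emph{minimal}, where $\delta(\tau):=|\tau|_+^{-\delta_2}/C_4$, and then replacing the \emph{entire} stretch of $\gamma$ between the first time $\tau_1$ and the last time $\tau_2$ that $\gamma$ meets $\partial D_{\tau_0}$ by a single arc of that circle. After this one deformation, every remaining point $\gamma(\tau)$ on the modified curve that is outside $\cl{D_{\tau_0}}$ satisfies $|\gamma(\tau)-z|>\delta(\tau_0)/2$; combining the minimality of $|\tau_0|$ (for $|\tau|<|\tau_0|$) with the monotonicity of $|\tau|\mapsto\delta(\tau)$ (for $|\tau|\geq|\tau_0|$) upgrades this to $|\gamma(\tau)-z|>\delta(\tau)/2$ for \emph{all} such $\tau$. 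That single inequality is what makes the tail integral converge at rate $O(1/|z|_+)$ without any case analysis over multiple near-approaches, and it keeps $C_6$ a clean product $O(C_1C_2C_3C_4C_5)$. Your proposal would need this replacement of the local-window deformation by the global ``first-to-last crossing'' deformation, plus the minimality/monotonicity observation, to become a complete proof.
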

\begin{proof}
  We have
    \[ |g(\gamma(\tau))|\cdot |\gamma'(\tau)| \leq
        C_2\cdot C_3 \cdot |\tau|_+^{-(2+\delta_2)}, \]
   hence 
    \begin{align} \label{eqn:absolute}
     \int_{\gamma} |g(\zeta)|\,|d\zeta| &=
       \int_{-\infty}^{\infty} |g(\gamma(\tau))|\cdot |\gamma'(\tau)|\, d\tau  \\
     &\leq C_2\cdot C_3\cdot \int_{-\infty}^{\infty}|\tau|_+^{-(2+\delta_2)}\,d\tau =
        2\cdot C_2\cdot C_3\cdot \bigl(1+\frac{1}{1+\delta_2}\bigr) \leq
       4\cdot C_2\cdot C_3. \notag \end{align}
   This implies that the integral in (\ref{eqn:defnh}) is absolutely convergent and defines a holomorphic
    function $h$ on $\C\setminus \gamma$. 
    If $z_0\in\gamma$, then we can
    modify the curve $\gamma$ slightly to avoid the point $z_0$, and 
    thus
    see that the restriction of $h$ to $\tilde{T}$ has
    an analytic extension to a neighborhood of $z_0$; the same is 
    true for the restriction $h|_{\C\setminus\cl{\tilde{T}}}$.
    Using the residue theorem, we see that the
    two extensions differ exactly by the function
    $g(z)$ in a neighborhood of $z_0$, which shows that
    the function $f$ defined in the statement of the theorem
    does indeed extend to an entire function. (Compare also 
    \cite[Claim 2 in Section 7]{strahlen}.)

   Thus it remains to prove that $|h(z)|=O(1/|z|_+)$. The main problem is
    to estimate $h(z)$ when $z$ is close to some point $\gamma(\tau_0)$. In this
    case, we will modify $\gamma$ to a curve $\gamma^z$ that avoids the disk
    $D_{\tau_0}$ of radius $\delta(\tau_0) := |\tau_0|_+^{-\delta_2}/C_4$ around $\gamma(\tau_0)$.

   More precisely, let $z\in\C\setminus\gamma$.
    If $|\gamma(\tau)-z|>\delta(\tau)/2$ for all $\tau\in\R$, then
    we set $\gamma^z := \gamma$. 
    Otherwise
    choose $\tau_0$ with 
    $|\gamma(\tau_0)-z|\leq \delta(\tau_0)/2$ such that $|\tau_0|$ is 
    minimal. Let $\tau_1$ and $\tau_2$ be
    the smallest, respectively largest, values of $\tau$ for which
    $\gamma(\tau)\in\partial D_{\tau_0}$, and set
     \[ \gamma^z := \gamma\bigl((-\infty,\tau_1)\bigr) \cup
           \alpha \cup \gamma\bigl([\tau_2,\infty)\bigr), \]
    where $\alpha$ is an arc of 
    $\partial D_{\tau_0}$ chosen such that 
     $\gamma^z$ is homotopic to $\gamma$ in $C\setminus\{z\}$. 

   We then have 
\[       h(z) = \frac{1}{2\pi i}\int_{\gamma^z} \frac{g(\zeta)}{\zeta-z}\,d\zeta.
  \]
   Hence
    \begin{equation} \label{eqn:splitting1}
      2\pi |h(z)| \leq 
       \int_{\partial D_{\tau_0}}\frac{|g(\zeta)|}{|\zeta-z|}|d\zeta| +
       \int_{\gamma\setminus \cl{D_{\tau_0}}}\frac{|g(\zeta)|}{|\zeta-z|}|d\zeta|. 
    \end{equation}

\smallskip

  To estimate the first integral,
    we bound $|\tau_0|_+$ from below in terms of $|z|_+$. We have 
    \begin{equation}
     |\gamma(\tau_0) - z | \leq \frac{\delta(\tau_0)}{2} =
      \frac{|\tau_0|_+^{-\delta_2}}{2C_4}\leq 1.
    \end{equation}
   Thus $|z|_+\leq 2|\gamma(\tau_0)|_+$, and hence, by 
    (\ref{item:gamma}), 
      \begin{equation} \label{eqn:sizeoft0}
        |\tau_0|_+ \geq \frac{|\gamma(\tau_0)|_+}{C_1}\geq
          \frac{|z|_+}{2C_1}.
      \end{equation}

  So, by choice of $D_{\tau_0}$ and (\ref{item:disk}), we can 
    bound the first integral from (\ref{eqn:splitting1}):
     \begin{equation}
        \label{eqn:estimate1} 
        \int_{\partial D_{\tau_0}} \frac{|g(\zeta)|\,|d\zeta|}{|\zeta-z|} \leq 
        \int_{\partial D_{\tau_0}} \frac{C_5\,|d\zeta|}{|\tau_0|_+\delta(\tau_0)}
        = \frac{2\pi C_5}{|\tau_0|} 
       \leq \frac{4\pi C_1 C_5}{|z|_+}. 
      \end{equation}
 \smallskip

  Now we turn to 
   estimating the second integral in
   (\ref{eqn:splitting1}).
   If $\zeta=\gamma(\tau)\in \gamma^z\setminus \cl{D_{\tau_0}}$, then we have 
   $|\zeta - z|>\delta(\tau_0)/2$. Using the definition of $\tau_0$ and
    monotonicity of the function $|\tau| \mapsto \delta(\tau)$, we see that 
     \begin{equation}\label{eqn:distancegammaz}
      |\gamma(\tau)-z|> \frac{\delta(\tau)}{2} = 
           \frac{1}{2 C_4|\tau|_+^{\delta_2}}. 
     \end{equation}
    This estimate, together
     with (\ref{eqn:absolute}), 
     would be sufficient to prove that the integral in question,
     and hence $h(z)$,  is bounded. 
    In
    order to obtain the stronger fact that $h(z)=O(1/z)$, we subdivide
    the remaining part of the curve once more. (We note that
    this stronger bound is not required for the applications
    that we have in mind.) 

  Define  
   $\Theta := \max\left(1,\frac{|z|_+}{2C_1}\right)$. For
   $\tau\leq \Theta$, we then have  
    \[ |\gamma(\tau)|\leq C_1\cdot |\tau|_+ \leq C_1\cdot\Theta =   \max\left(\frac{|z|_+}{2},C_1\right)=:R. \]
   We use this to estimate the integral over the curve
     \[ \gamma^z_1 := \gamma^z\cap \gamma\bigl([-\Theta,\Theta]\bigr). \]
   The idea is that $|\zeta-z|$ is (at least) comparable to $|z|_+$ for
    all points $\zeta$ on this curve. Indeed, suppose that
    $|z|_+<2C_1$. Then $\Theta=1$, and hence, by     (\ref{eqn:distancegammaz}), 
     \begin{equation} \label{eqn:zetazestimate}
       |\zeta - z | \geq \frac{1}{2C_4\Theta^{\delta_2}} =
            \frac{1}{2C_4} >  \frac{|z|_+}{4C_1 C_4}. 
     \end{equation} 
If $|z|_+\geq 2C_1$, then $|\zeta|\leq R = |z|_+/2$, hence again
      $|\zeta-z|\geq |z|_+/2>|z|_+/(4C_1 C_4)$.  Thus, 
    using (\ref{eqn:absolute}): 
     \begin{align} \label{eqn:estimate2}
        \int_{\gamma^z_1} \frac{|g(\zeta)|}{|\zeta-z|}|d\zeta| &\leq
        \frac{4C_1 C_4 }{|z|_+}\int_{\gamma^z_1}|g(\zeta)|\,|d\zeta| \\
     &\leq \notag
        \frac{4C_1 C_4}{|z|_+}\int_{\gamma}|g(\zeta)|\,|d\zeta| \leq
       \frac{16C_1 C_2 C_3 C_4}{|z|_+}.
     \end{align}

 It remains to deal with the part of the curve given by 
 \[
          \gamma^z_2 := 
                 \gamma\setminus     
        \left(\cl{D_{\tau_0}}\cup \gamma\bigl([-\Theta,\Theta]\bigr)\right). \]
   We use (\ref{eqn:distancegammaz}),
    as well as (\ref{item:gamma'}) and (\ref{item:g}) to obtain 
      \begin{align}\label{eqn:estimate3}
    \int_{\gamma^z_2} \frac{|g(\zeta)|}{|\zeta-z|}|d\zeta| &\leq
         \int_{|\tau|>\Theta} 2C_4 |\tau|^{\delta_2}\cdot |g(\gamma(\tau))|
                           \cdot |\gamma'(\tau)|\,|d\tau| \\
       &\leq 4\cdot C_2\cdot C_3\cdot C_4\cdot \notag
         \int_{\Theta}^{\infty}
              t^{\delta_2}\cdot t^{-(2+\delta_1+\delta_2)}\cdot t^{\delta_1}\,dt \\ 
       &= \notag
        4\cdot C_2\cdot C_3\cdot C_4 \cdot 
         \int_{\Theta}^{\infty}
              \tau^{-2}\,d\tau 
      = 4C_2C_3 C_4 \Theta^{-1} \leq 
        \frac{8 C_1 C_2C_3 C_4}{|z|_+}.
    \end{align}

    Combining the estimates (\ref{eqn:estimate1}), (\ref{eqn:estimate2}) and
      (\ref{eqn:estimate3}), the proof is complete. 
\end{proof}

 The following proposition shows that any function approximating a universal
  covering must itself have a logarithmic singularity over infinity. 

 \begin{prop} \label{prop:logtract}
  Let $\Psi:T\to\HH$ be a model function, 
   and set $g := \exp\circ\Psi$. Suppose that
   $f:T\to\C\setminus\{0\}$ 
   is a holomorphic function with $|f(z)-g(z)|\leq M$ for some $M>0$ and all
   $z\in T$. 
   Define 
    \[ T' := \{z\in T: |f(z)|>2M\}. \]
   Then $T'$ is a simply-connected domain and $f:T'\to \{|z|>2M\}$ is a universal
   covering map. 
 \end{prop}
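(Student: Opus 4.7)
The plan is to pull the problem back to the right half-plane via the conformal isomorphism $\Psi$. Set $\hat f := f \circ \Psi^{-1} : \HH \to \C \setminus \{0\}$, so that $|\hat f(\zeta) - e^\zeta| \leq M$ on $\HH$, and let $U := \Psi(T') = \{\zeta \in \HH : |\hat f(\zeta)| > 2M\}$. Since $\Psi$ is a conformal isomorphism, simple connectedness and the covering property transfer back through $\Psi$, so it suffices to prove the corresponding statement about $\hat f$ and $U$.

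Since $\HH$ is simply connected and $\hat f$ is nonvanishing, there is a holomorphic lift $\tilde h : \HH \to \C$ with $\exp \circ \tilde h = \hat f$, unique up to $2\pi i \Z$. On the subdomain $V := \HH \cap \{\re \zeta > \log(2M)\}$ one has $|\hat f/e^\zeta - 1| \leq M/|e^\zeta| < 1/2$, so $\hat f/e^\zeta$ takes values in the disc $\D_{1/2}(1)$. Choosing the branch of $\tilde h$ on $V$ to be $\tilde h(\zeta) = \zeta + \log(\hat f(\zeta)/e^\zeta)$ with the principal logarithm yields the fundamental estimate
\begin{equation*}
  |\tilde h(\zeta) - \zeta| \leq 2Me^{-\re\zeta} \quad \text{for } \zeta \in V,
\end{equation*}
so $\tilde h$ is asymptotic to the identity as $\re \zeta \to \infty$. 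Because $|\hat f(\zeta)| = e^{\re \tilde h(\zeta)}$, the set $U$ equals $\tilde h^{-1}(A) \cap \HH$, where $A := \{w : \re w > \log(2M)\}$ is a right half-plane. Since $\exp : A \to \{|w|>2M\}$ is a universal covering and $A$ is simply connected, the proposition reduces to the central claim that $\tilde h|_U : U \to A$ is a conformal isomorphism onto the relevant piece of $A$; from this, $U$ inherits simple connectedness from $A$, and $\hat f|_U = \exp \circ \tilde h|_U$ becomes a universal covering onto $\{|w|>2M\}$.

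To establish the central claim, I would use the fundamental estimate together with Cauchy's formula on discs of radius $1$ inside $V$ to bound $|\tilde h'(\zeta) - 1| = O(Me^{1-\re\zeta})$, giving $\re \tilde h' > 0$ and hence univalence of $\tilde h$ on the convex subdomain $V_1 := \HH \cap \{\re\zeta > \log(2M)+1\}$. A Rouch\'e-type argument comparing $\tilde h - w_0$ with $\zeta - w_0$ on a bounded contour in $V_1$ around a point $w_0 \in A$ with $\re w_0$ sufficiently large then produces a unique preimage in $V_1$. The main obstacle is handling the thin strip $U \setminus V_1$ (the set $U$ need not be contained in $V$, since $|\hat f(\zeta)|>2M$ only forces $\re \zeta > \log M$), where $\tilde h$ is not close to the identity; I would rule out extra preimages there using the argument principle applied to a contour following $\partial U \cap \HH$, along which $|\hat f| = 2M$ and hence $\re \tilde h = \log(2M)$, so $\tilde h(\partial U \cap \HH) \subset \partial A$. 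Combined with the asymptotic behavior at $\infty$ in $V$, this gives the required global count of preimages and closes the proof.
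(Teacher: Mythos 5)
Your plan shares the paper's core idea---compare $\log f$ with $\Psi$, or equivalently $\tilde h$ with the identity after conjugating by $\Psi$---but the technical route you take is genuinely different and, as written, has a real gap. The paper's proof first establishes simple connectivity of $T'$ by the minimum principle applied to $|f|$ on $T'' := \{|g|>M\}$, and then observes that $|F-\Psi|$ is bounded by a \emph{constant} on all of $T'$, not merely on a subregion: on $T'$ one has $|f|>2M$, hence $|g|\geq |f|-M>M$, hence $|f/g-1|<1$, so $f/g$ takes values in $\D_1(1)$; this simultaneously gives $|\re(F-\Psi)|<\log 2$ and pins $\arg(f/g)$ modulo $2\pi$, and by connectivity the winding is locked, yielding a uniform $|F-\Psi|\leq K$. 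Properness and the degree-$1$ count at $\infty$ then close the argument quickly.

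You instead establish the sharper estimate $|\tilde h(\zeta)-\zeta|\leq 2Me^{-\re\zeta}$, but only on $V=\{\re\zeta>\log(2M)\}$, which is exactly why the "thin strip" $U\setminus V_1$ becomes a problem. The observation you are missing is that the \emph{same} inequality $|\hat f(\zeta)|>2M$ already gives $|e^\zeta|>M$ and hence $|\hat f/e^\zeta-1|<1$ on all of $U$; choosing the appropriate branch of the logarithm as above shows $|\tilde h(\zeta)-\zeta|\leq K$ throughout $U$ (with $K$ constant rather than decaying). With this, properness of $\tilde h|_U\colon U\to A$ is immediate and the entire Rouch\'e-plus-argument-principle apparatus, in particular the contour along $\partial U\cap\HH$, becomes unnecessary. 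As it stands, that contour argument is underspecified: $\partial U$ need not be a single Jordan arc, the behaviour of the contour "at $\infty$" through the strip is uncontrolled because your estimate fails there, and the whole scheme presupposes $U$ is connected (your phrase "conformal isomorphism onto $A$" already assumes this), which you do not establish beforehand. These are the pieces you would need to supply to make the proposal into a complete proof; the paper's $O(1)$-bound on all of $T'$ sidesteps all of them.
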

 \begin{proof}
  Let us set $T'' := \{z\in T: |g(z)|>M\}$. Then $T''$ is simply-connected,
  $|f(z)|>0$ for all $z\in T''$ and $T'\subset T''$. 
  It follows from the minimum principle that $T'$ is simply-connected. We can
  define
   a branch $F:T'\to \HH$ of $\log f$. By continuity, we have $\re F(z)\to \log(2M)$ as
   $z$ tends to a point in the boundary of $T'$ (in $\C$). We claim that
   $|F(z)|\to\infty$ as $z\to\infty$. Indeed, by assumption we have, for
   all $z\in T'$, 
     \[ |\re F(z) - \re \Psi(z)| =
        \left| \log\frac{|f(z)|}{|g(z)|}\right| \leq
           \log 2. \] 
   Furthermore, the argument of $f(z)$ and $g(z)$ differs by less than
   $\pi$, and hence 
    $|\im F(z) - \im \Psi(z)|$ is contained in the union
    \[ \bigcup_{k\in\Z} \bigl ((2k-1)\pi , (2k+1)\pi\bigr ). \]
   Since $T'$ is connected, it follows that
      \[ |\im F(z) - \im \Psi(z)| \leq (2k+1)\pi \]
    for some $k\in\Z$. Hence 
     \[ |F(z) - \Psi(z) | \leq K \] 
   for a suitable constant $K>0$, which proves our claim that $|F(z)|\to\infty$ as $z\to\infty$.  

  So $F$ is a proper map, and hence has some well-defined degree $d$. $F$ extends to
   a degree $d$ map from the boundary of $T'$ (in the Riemann sphere) to 
   $\{\re z = \log(2M)\}\cup\{\infty\}$. Since $\infty$ only has one preimage,
   it follows that $d=1$. Thus $F$ is a conformal isomorphism, and
   $f = \exp\circ F$ is a universal covering map, as claimed. 
 \end{proof}

Before proving our main approximation result in Section \ref{sec:approx}, 
  let us note that Theorem
  \ref{thm:cauchy} includes the examples from \cite{polyaszego} and 
  \cite{stallardentirehausdorff3}. 
   
\begin{cor}
 Let $p>0$ and set
   \begin{align*}
      S_1 &:= \{ x + iy: x>0\text{ and } |y|<\pi \}, \quad\text{and} \\ 
      S_2 &:= \{ x + iy: |y| \leq \pi x/[(1+p)(\log (x))^p], \; x \geq 3 \}. 
   \end{align*}
   Also set
    \[ g_1(z) := e^{e^{z}}\quad\text{and}\quad g_2(z) :=\exp(e^{(\log z)^{1+p}}) \]
    (where $g_1$ is defined on $\C$, and $g_2$ on $\C\setminus(-\infty,1]$). 
   Let $\gamma_j$ be the boundary of $S_j$, described in clockwise direction. Then 
    \[ f_j(z) := \frac{1}{2\pi i} \int_{\gamma} \frac{g_j(\zeta)}{\zeta-z}d\zeta,\quad
      \notin \cl{S}, \]
  extends to an entire function $f_j:\C\to\C$. Furthermore, $f_j\in\B$ and 
    \[ f_j(z)=\begin{cases}
              g_j(z) + O(1/z) & z\in S_j \\
              O(1/z) & \text{otherwise}\end{cases} \qquad\text{as $z\to\infty$.}\]
\end{cor}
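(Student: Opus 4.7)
The plan is a direct application of Theorem \ref{thm:cauchy} to each pair $(g_j,\gamma_j)$, followed by an appeal to Proposition \ref{prop:logtract} to verify that $f_j\in\B$. I would first choose $T_j$ to be a thin simply-connected enlargement of $\cl{S_j}$ on which $g_j$ is holomorphic (trivial for the entire function $g_1$; for $g_2$, a narrow horn-shaped neighborhood of $\cl{S_j}$ contained in $\C\setminus(-\infty,1]$ suffices), so that $\gamma_j\subset T_j$ and $\tilde T_j = S_j$. I would then parameterize $\gamma_j$ by a natural parameter $\tau\in\R$ (essentially $\pm\re\zeta$ on the long ``horizontal'' sides, modulated near the cap), so that $|\gamma_j(\tau)|\asymp|\tau|$ and $|\gamma_j'(\tau)|$ is uniformly bounded. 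This verifies hypotheses (a) and (b) of Theorem \ref{thm:cauchy} with $\delta_1=0$.

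The heart of the matter is hypothesis (c), which I would establish with $\delta_2=0$. For $g_1$, the horizontal sides of $\partial S_1$ satisfy $\im\zeta=\pm\pi$, so $e^{\zeta}=-e^{\re\zeta}$ and $|g_1(\zeta)|=e^{-e^{\re\zeta}}$ is doubly exponentially small in $|\tau|$. For $g_2$, the shape of $S_2$ is designed precisely so that the Taylor expansion
\[ (\log\zeta)^{1+p} = (\log\re\zeta)^{1+p} + i(1+p)(\log\re\zeta)^{p}\arg\zeta + O(1) \]
along $\partial S_2$ gives $\im(\log\zeta)^{1+p}\approx\pm\pi$, so $e^{(\log\zeta)^{1+p}}$ has real part $\approx -e^{(\log\re\zeta)^{1+p}}$, and hence $|g_2(\zeta)|$ again decays faster than any polynomial. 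Hypothesis (d) then follows from the same estimates: a bounded perturbation of $\zeta$ changes $\re\zeta$, respectively $\log\re\zeta$, by a bounded amount, and the super-exponential decay survives with room to spare. Applying Theorem \ref{thm:cauchy} produces an entire function $f_j$ with $f_j(z)=g_j(z)+O(1/z)$ on $\tilde T_j=S_j$ and $f_j(z)=O(1/z)$ elsewhere.

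To prove $f_j\in\B$, I would express $g_j$ on a suitable subdomain as $\exp\circ\Psi_j$, where $\Psi_j$ is a conformal isomorphism onto $\HH$ satisfying the model-function boundary condition: take $\Psi_1(z)=e^z$ on $U_1=\{|\im z|<\pi/2\}$, and $\Psi_2(z)=e^{(\log z)^{1+p}}$ on the sub-horn $U_2\subset S_2$ where $\re(\log z)^{1+p}>0$. Since $|f_j - g_j|$ is bounded on $U_j$ by the conclusion of Theorem \ref{thm:cauchy}, Proposition \ref{prop:logtract} applies and shows that $f_j$ restricts to a universal covering from $\{z\in U_j:|f_j(z)|>2M\}$ onto $\{|w|>2M\}$ for sufficiently large $M$. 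Consequently $\sing(f_j^{-1})\cap\{|w|>2M\}=\emptyset$, and combined with the bound $|f_j|=O(1/|z|)$ outside $S_j$ (which caps any remaining singular values), this yields that $\sing(f_j^{-1})$ is bounded.

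The principal obstacle is the asymptotic expansion of $(\log\zeta)^{1+p}$ along $\partial S_2$: the error terms must be controlled uniformly as $|\zeta|\to\infty$ along the slowly widening horn in order to confirm that $\im(\log\zeta)^{1+p}$ stays within a bounded distance of $\pm\pi$. Once this is in hand, the remaining verifications are routine. A minor bookkeeping issue is arranging the ``slight enlargement'' $T_j$ so that the curve $\gamma_j=\partial S_j$ lies inside it while the target component $\tilde T_j$ remains exactly $S_j$; this is a straightforward topological adjustment.
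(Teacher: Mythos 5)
Your approach is the same as the paper's, which simply observes that the arc-length parametrizations of $\gamma_1,\gamma_2$ satisfy the hypotheses of Theorem \ref{thm:cauchy} with $\delta_1=\delta_2=0$ (taking $T$ to be the full domain of definition of $g_j$) and then invokes Proposition \ref{prop:logtract} for the class $\B$ membership; your thin enlargement and your natural parametrization are inessential variants. One caution: your stated criterion that $\im(\log\zeta)^{1+p}$ need only stay ``within a bounded distance of $\pm\pi$'' is too weak, since one actually needs $\cos\bigl(\im(\log\zeta)^{1+p}\bigr)$ bounded away from $0$ and negative, i.e.\ the imaginary part must stay within distance strictly less than $\pi/2$ of $\pm\pi$; fortunately the expansion you write down gives an error of order $(\log x)^{-p}\to 0$, which is more than enough. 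Also, for the final $\B$ step you should make explicit that on $S_j\setminus U_j$ one has $|g_j|\le 1$ (for $g_1$ because $\cos(\im z)\le 0$ there, and similarly for $g_2$), so that $\{|f_j|>2M\}$ really lies inside $U_j$ and Proposition \ref{prop:logtract} captures all of $\sing(f_j^{-1})$.
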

\begin{proof} 
 It is easy to see that the parametrizations of
   $\gamma_1$ and $\gamma_2$ by arc-length satisfy 
   the assumptions of Theorem \ref{thm:cauchy}, say with
   $\delta_1=\delta_2=0$, and 
   $T$ being the domain of definition of $g_j$. Hence
   $f_j(z)$ is indeed defined and extends to 
   an entire function with the stated asymptotics. 
   The fact that $f_j$ belongs to
   the class $\B$ follows from Proposition \ref{prop:logtract}. 
\end{proof} 

\section{The hyperbolic metric of simply-connected domains}
\label{sec:hyperbolicmetric}
  We frequently use the \emph{hyperbolic metric} in a domain
   $U\subset\C$ that omits more than two points. (For an introduction to the
   hyperbolic metric, see e.g.\ \cite{beardonminda}.) We denote distance
   with respect to this metric by $\dist_{U}$, and the density of the
   metric by $\rho_U$. That is,
    \[ \dist_{U}(z,w) = 
  \inf_{\gamma} \int_{0}^{1} |\gamma'(t)|\rho_U(\gamma(t))\,dt, \]
    where the infimum is taken over all curves $\gamma:[0,1]\to U$ with
   $\gamma(0)=z$ and $\gamma(1)=w$. 

  We shall routinely use a number of standard facts about the hyperbolic metric.

 \begin{prop}[(Properties of the hyperbolic metric)] \label{prop:hyp_metric} 
   \mbox{}
   \begin{enumerate}[(a)]
    \item The hyperbolic metric in the right half plane $\HH$ is given by
      $\rho_{\HH} = \frac{1}{\re z}$. In particular,
      $\dist_{\HH}(1,x)=\log x$ for $x\geq 1$ and
      $\dist_{\HH}(x,x+ix)\leq 1$ for every $x>0$. 
      \label{item:halfplane}
    \item In the strip $S=\{|\im z|<\pi\}$, we have   
         $\dist_S(z,w)\geq |\re z -\re w |/2$ for all $z,w\in S$. 
      \label{item:strip}
    \item If $V\subset W$, then $\rho_{V}(z)\geq \rho_W(z)$ for all $z\in V$.
     \label{item:pick}
    \item If $V,W\subset\C$ are hyperbolic
      and $f:V\to W$ is a conformal isomorphism, then $f$ is a hyperbolic 
      isometry; i.e. $\rho_V(z)=|f'(z)|\cdot \rho_W(f(z))$. 
       \label{item:isomorphism}
    \item If $V\subset\C$ is simply connected, then
            $1/(2\dist(z,\partial V))\leq \rho_V(z)\leq 2/\dist(z,\partial V)$ for all $z\in V$. \label{item:simplyconnected}
   \end{enumerate}
  \end{prop}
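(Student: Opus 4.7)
Since this proposition is essentially a catalogue of well-known facts about the hyperbolic metric, I would not give long arguments but rather indicate in each case the standard reference or the one-line derivation, citing \cite{beardonminda} for background. The plan is to handle (a) and (b) by explicit computation using the uniformizing maps, (c) and (d) by the Schwarz--Pick lemma, and (e) by Koebe's theorems.

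For (a), I would start from the fact that the Cayley transform $w \mapsto (1-w)/(1+w)$ is a conformal isomorphism from the unit disk $\D$ to $\HH$, and pull back the standard density $\rho_{\D}(w) = 2/(1-|w|^2)$ to obtain $\rho_{\HH}(z) = 1/\re z$. The two specific distance estimates then follow by direct integration: along the real segment from $1$ to $x$ the length is $\int_1^x dt/t = \log x$, and along the vertical segment from $x$ to $x+ix$ the length is $\int_0^x dt/x = 1$, so the infimum over all curves is at most $1$.

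For (b), I would use the conformal isomorphism $\phi: S \to \HH$ given by $\phi(z) = \exp(z/2)$, which maps $\{|\im z| < \pi\}$ onto the right half plane. Combined with (a) and the conformal invariance from (d), this yields
\[ \rho_S(z) = \frac{|\phi'(z)|}{\re \phi(z)} = \frac{1}{2\cos(\im z/2)} \geq \frac{1}{2}, \]
and the stated bound follows by integrating the density along any curve and projecting onto the real axis. Part (c) is the monotonicity principle, an immediate consequence of the Schwarz--Pick lemma applied to the inclusion map $V \hookrightarrow W$, lifted via universal coverings. Part (d) is the standard conformal invariance of the hyperbolic metric, which follows from uniqueness of the hyperbolic metric (or directly by pulling back via universal covers to $\D$ and applying the Schwarz lemma in both directions).

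For (e), the two inequalities are separate. For the upper bound, observe that the disk $\D_{\dist(z, \partial V)}(z)$ is contained in $V$, so by (c) and (a) (transported to a disk) we have $\rho_V(z) \leq \rho_{\D_{\dist(z,\partial V)}(z)}(z) = 2/\dist(z,\partial V)$. For the lower bound, use the Koebe one-quarter theorem: if $\psi: \D \to V$ is a Riemann map with $\psi(0) = z$, then $|\psi'(0)|/4 \leq \dist(z, \partial V)$, and since $\rho_V(z) = 2/|\psi'(0)|$, we obtain $\rho_V(z) \geq 1/(2\dist(z, \partial V))$. None of the steps is a real obstacle --- this is a routine compilation --- so the only thing to watch is that the constants in (b) and (e) are stated in the form the author will later use without tightening.
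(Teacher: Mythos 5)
The paper does not actually prove this proposition; it is presented as a list of standard facts with a pointer to \cite{beardonminda}, so there is no paper argument to compare against. Your sketches are all correct and are the usual derivations: the Cayley transform computation for (a), pulling back via $z\mapsto e^{z/2}$ for (b), Schwarz--Pick for (c) and (d), and comparison with inscribed disks plus the Koebe one-quarter theorem for (e); the constants you obtain ($1/2$ in (b), the factors $1/2$ and $2$ in (e)) match what the paper states and uses later.
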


 Let us make two more simple observations about the hyperbolic metric
  in simply-connected domains.
  \begin{lem}[(Hyperbolic distance and Euclidean distance)]\label{lem:distance}
   Let $V\subset\C$ be a simply-connected domain, and let $z,w\in V$.
    Then
    \[ \dist_V(z,w)\geq \frac{1}{2}\log\left(1+\frac{|z-w|}{\dist(z,\partial V)}\right). \]
  \end{lem}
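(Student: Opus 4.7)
The plan is to bound the hyperbolic length of an arbitrary curve from $z$ to $w$ from below by exhibiting an explicit ``potential'' whose Euclidean gradient is pointwise dominated by the hyperbolic density. Set $d := \dist(z,\partial V)$ and define
\[ \phi(\zeta) := \tfrac{1}{2}\log\!\left(1 + \frac{|\zeta-z|}{d}\right), \]
so that the right-hand side of the claim is exactly $\phi(w)$ (note $\phi(z)=0$).

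First I would estimate the Euclidean gradient of $\phi$: since $|\nabla_\zeta |\zeta-z|\,|=1$ away from $z$, a direct differentiation gives
\[ |\nabla \phi(\zeta)| = \frac{1}{2\bigl(d + |\zeta-z|\bigr)}. \]
Next, combining the triangle inequality $\dist(\zeta,\partial V)\leq d + |\zeta-z|$ with the Koebe lower bound from Proposition \ref{prop:hyp_metric}(\ref{item:simplyconnected}) yields
\[ \rho_V(\zeta)\ \geq\ \frac{1}{2\dist(\zeta,\partial V)}\ \geq\ \frac{1}{2\bigl(d+|\zeta-z|\bigr)}\ =\ |\nabla\phi(\zeta)| \]
for every $\zeta\in V$. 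Thus the hyperbolic density dominates the Euclidean gradient of $\phi$ pointwise in $V$.

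Then I would apply this to an arbitrary piecewise smooth curve $\gamma:[0,1]\to V$ with $\gamma(0)=z$ and $\gamma(1)=w$. By the fundamental theorem of calculus applied to $\phi\circ\gamma$, followed by Cauchy--Schwarz,
\[ \phi(w) = \phi(w)-\phi(z) = \int_0^1 \nabla\phi(\gamma(t))\cdot\gamma'(t)\,dt \leq \int_0^1 |\nabla\phi(\gamma(t))|\,|\gamma'(t)|\,dt \leq \int_0^1 \rho_V(\gamma(t))\,|\gamma'(t)|\,dt. \]
The right-hand side is the hyperbolic length of $\gamma$, so taking the infimum over all such curves gives $\phi(w)\leq \dist_V(z,w)$, which is exactly the desired inequality.

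There is no real obstacle; the only subtlety is the triangle inequality step, which converts the pointwise lower bound on $\rho_V$ (in terms of $\dist(\zeta,\partial V)$) into one in terms of the fixed quantity $d$ plus $|\zeta-z|$, so that it can be matched against a Euclidean gradient and hence integrated along $\gamma$.
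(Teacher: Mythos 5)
Your proof is correct and rests on the same two ingredients as the paper's: the Koebe lower bound $\rho_V \geq 1/(2\dist(\cdot,\partial V))$ and the triangle inequality $\dist(\zeta,\partial V) \leq d + |\zeta - z|$. The paper phrases the argument via an arc-length parametrization (so $\dist(\gamma(t),\partial V) \leq d + t$), while you package the same bound as a pointwise gradient estimate for the potential $\phi$ and invoke the fundamental theorem of calculus; this is a cosmetic reformulation of the identical computation.
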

 \begin{proof}
  Set $\delta := \dist(z,\partial V)$. 
  Let $\gamma:[0,T]\to V$ be a curve connecting $z$ and $w$,
   parametrized by Euclidean arc-length. So $T\geq \dist(z,w)$. Then we have
   $\dist(\gamma(t),\partial V)\leq \delta+t$. Thus, by
    Proposition \ref{prop:hyp_metric} (\ref{item:simplyconnected}), 
   \begin{align*} \notag
     \int_0^T |\gamma'(t)| \rho_V(t)\, dt &\geq
       \int_0^T \frac{dt}{2\dist(\gamma(t),\partial V)} \geq
       \frac{1}{2}\int_0^T \frac{dt}{\delta+t}  \\
       &= \frac{1}{2}\left(\log(\delta+T)-\log(\delta)\right) =
         \frac{1}{2}\log\left(1+\frac{T}{\delta}\right).\qedhere
\end{align*} 
 \end{proof}

 \begin{lem}[(Bounded hyperbolic diameter of Euclidean disks)]
    \label{lem:disks}
  Let $V\subset\C$ be a simply-connected domain, let $z_0\in V$ and
   let $\Delta\in(0,2]$. Define
   $\delta := \Delta\cdot \dist(z_0,\partial V)/4$.

  If $z\in V$ with $|z-z_0|\leq \delta$, then
    $\dist_V(z,z_0)\leq \Delta$.
 \end{lem}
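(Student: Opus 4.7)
The plan is to estimate the hyperbolic length of the straight line segment from $z_0$ to $z$ using the standard upper bound on the hyperbolic density in a simply-connected domain.

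First, I would set $R := \dist(z_0, \partial V)$ and $a := |z-z_0|$, so that by hypothesis $a \leq \Delta R/4 \leq R/2$. For $t \in [0,1]$, the point $w(t) := z_0 + t(z-z_0)$ lies in $V$ (since the open Euclidean disk $\D_R(z_0)$ is contained in $V$), and the triangle inequality for $\dist(\cdot, \partial V)$ gives
\[
\dist(w(t),\partial V) \;\geq\; R - t\,a.
\]
Combining this with Proposition \ref{prop:hyp_metric}\,(\ref{item:simplyconnected}), the hyperbolic density satisfies $\rho_V(w(t)) \leq 2/(R-ta)$ along the segment.

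Next I would integrate to obtain an upper bound for $\dist_V(z,z_0)$ in terms of $\Delta$. Using the parametrization above:
\[
\dist_V(z,z_0) \;\leq\; \int_0^1 a \cdot \frac{2}{R - ta}\,dt \;=\; 2\log\frac{R}{R-a} \;\leq\; 2\log\frac{1}{1 - \Delta/4}.
\]

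The final step is the elementary verification that $-2\log(1-\Delta/4) \leq \Delta$ for all $\Delta \in (0,2]$, which is equivalent to $1 - \Delta/4 \geq e^{-\Delta/2}$. Setting $\varphi(\Delta) := 1 - \Delta/4 - e^{-\Delta/2}$, one computes $\varphi(0) = 0$ and $\varphi'(\Delta) = -1/4 + \tfrac12 e^{-\Delta/2}$, which is positive on $[0, 2\log 2)$ and negative on $(2\log 2, 2]$; since $\varphi(2) = \tfrac12 - e^{-1} > 0$, $\varphi$ is non-negative throughout $(0,2]$, giving the bound. This step is routine calculus and is the only place where the specific restriction $\Delta \leq 2$ is used; no serious obstacle is anticipated.
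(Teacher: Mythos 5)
Your proof is correct and follows essentially the same strategy as the paper: parametrize the straight segment from $z_0$ to $z$ and integrate the upper bound $\rho_V \leq 2/\dist(\cdot,\partial V)$ from Proposition~\ref{prop:hyp_metric}\,(\ref{item:simplyconnected}). The paper takes a slightly simpler route by using the cruder constant bound $\dist(\gamma(t),\partial V)\geq \dist(z_0,\partial V)/2$ along the whole segment (valid since $|z-z_0|\leq\delta\leq\dist(z_0,\partial V)/2$), which gives $\dist_V(z,z_0)\leq 4|z-z_0|/\dist(z_0,\partial V)\leq\Delta$ directly and avoids your final calculus verification that $-2\log(1-\Delta/4)\leq\Delta$.
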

 \begin{proof}
  Set $d:=|z-z_0|$ and let
    $\gamma:[0,d]\to V$ be the straight line segment connecting
    $z_0$ and $z$, parametrized by arc-length. 
    Then 
    \[ \dist(\gamma(t),\partial V)\geq \frac{\dist(z_0,\partial V)}{2}, \]
    and thus, again using
    Proposition \ref{prop:hyp_metric} (\ref{item:simplyconnected}), 
    \[
       \dist_V(z,z_0)\leq \int_{0}^d \rho_V(\gamma(t))dt \leq
         \int_0^d\frac{4dt}{\dist(z_0,\partial V)} =
            \frac{4\cdot d}{\dist(z_0,\partial V)} \leq \Delta.\qedhere
     \]
 \end{proof}

 Finally, we will on occasion use the following version of the 
  Ahlfors distortion theorem \cite[Corollary to Theorem 4.8]{ahlforsconformal}.
 
 \begin{thm}[(Ahlfors distortion theorem)] \label{thm:ahlforsdistortion}
  Let $V\subset\C$ be a simply connected domain, and let
   $z,w\in V$ with $a := \re z < \re z =: b$. 
   Let $\sigma_z, \sigma_w\subset V$ be the
   maximal vertical line segments passing through $z$ resp.\ $w$.

  Set $S=\{a+ib: |b|\leq\pi\}$, and let 
   $\phi:V\to S$ be a conformal isomorphism such that
   $\phi(\sigma_z)$ and $\phi(\sigma_w)$ both separate $-\infty$ from
   $+\infty$ in $S$ (i.e., they connect the upper and lower boundaries
   of the strip $S$), and such that $\phi(\sigma_z)$ separates
   $\phi(\sigma_w)$ from $-\infty$ (i.e., $\phi(\sigma_z)$ is to
   the left of $\phi(\sigma_w)$ in $S$). 
   
  For $aleq x \leq b$, let $\theta(x)$ denote the 
   shortest length of
   a vertical line segment at real part $w$ that
   separates $z$ from $w$ in $V$. 
 
  If $\int_a^b dx/\theta(x) \geq 1/2$, then
    \[ \phi(b) - \phi(a) \geq 2\pi \int_a^b \frac{dx}{\theta(x)} - 2\log 32. \]
 \end{thm}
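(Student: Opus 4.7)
This result is the classical Ahlfors distortion theorem and is cited from \cite[Corollary to Theorem 4.8]{ahlforsconformal} as a black box, so I would not reprove it from scratch. That said, the canonical argument uses the length-area method, and I would sketch it as follows.

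For each $x \in [a, b]$, pick a vertical segment $I(x) \subset V \cap \{\re\zeta = x\}$ of length $\theta(x)$ that realizes the infimum defining $\theta(x)$; by construction $I(x)$ separates $z$ from $w$ in $V$. Since $\phi$ is a conformal isomorphism onto $S = \{|\im \zeta| < \pi\}$, and since $\phi(\sigma_z)$, $\phi(\sigma_w)$ separate the two ends of $S$ with $\phi(\sigma_z)$ to the left of $\phi(\sigma_w)$, the image $\phi(I(x))$ is a crosscut of $S$ that still separates $\phi(z)$ from $\phi(w)$. As such it must join the two horizontal boundary components of $S$, and hence its Euclidean length
\[ L(x) := \int_{I(x)} |\phi'(x + iy)|\,dy \]
is at least $2\pi$, the width of $S$.

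Now Cauchy--Schwarz yields
\[ 4\pi^2 \leq L(x)^2 \leq \theta(x)\int_{I(x)} |\phi'(x+iy)|^2\, dy. \]
Dividing by $\theta(x)$ and integrating in $x$ over $[a,b]$, the right-hand side becomes at most the Euclidean area $A$ of $\phi(W)$, where $W \subset V$ is the union of the $I(x)$. Because $\phi(W) \subset S$ and, up to controlled boundary effects near $I(a)$ and $I(b)$, is contained in the horizontal sub-strip of $S$ between real parts $\re\phi(z)$ and $\re\phi(w)$, one has $A \leq 2\pi(\phi(b) - \phi(a))$ plus a bounded error. Rearranging gives $\phi(b) - \phi(a) \geq 2\pi\int_a^b dx/\theta(x) - C$ for a universal constant $C$, and a careful accounting of the edge terms produces $C = 2\log 32$.

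The hypothesis $\int_a^b dx/\theta(x) \geq 1/2$ is exactly what is needed to guarantee that the right-hand side of the conclusion is not dominated by the additive error, so that the inequality is non-vacuous. The main obstacle in a self-contained proof is precisely this bookkeeping of $2\log 32$: one has to control how much $\phi(W)$ can protrude beyond the real-part range $[\re\phi(z), \re\phi(w)]$, which is where the crosscut-projection argument and the explicit numerical constant enter. Since the remainder of the paper uses only the leading term $2\pi\int dx/\theta$, a direct citation of Ahlfors is the cleanest route.
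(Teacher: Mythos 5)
The paper gives no proof of this theorem; it simply cites \cite[Corollary to Theorem 4.8]{ahlforsconformal}, exactly as you propose to do. Your length--area sketch is the standard argument underlying the cited result and is accurate as far as it goes, so citing Ahlfors is indeed the right call and matches the paper.
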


 We also note the following fact, which is closely related to
  the distortion theorem:

 \begin{lem}[(Geodesics in quadrilaterals)] \label{lem:modulus}
  Let $V$ be a simply-connected domain that is symmetric with respect to the
  real axis. Let $\gamma_1$ and $\gamma_2$ be two cross-cuts of $V$ that
  are symmetric with respect to the real axis, with 
  $\gamma_1\cap \gamma_2=\emptyset$, 
  and suppose that the quadrilateral $Q$ bounded by $\gamma_1$ and
  $\gamma_2$ in $V$ has modulus at least $1/2$. (I.e., the extremal
  length of the family of curves connecting $\gamma_1$ and $\gamma_2$ in
   $V$ is at least $1/2$.)

  Then $\cl{Q}$ contains a geodesic of $V$ that is symmetric with respect to 
   the real axis.
 \end{lem}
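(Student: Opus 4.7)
The plan is to uniformize $V$ by a symmetric conformal map to a horizontal strip, where symmetric complete geodesics can be made explicit, and then use the modulus hypothesis to locate one inside $\overline{Q}$. Since $V$ is simply connected and symmetric about $\R$, the intersection $V\cap\R$ is nonempty and actually connected: if two points of $V\cap\R$ lay in different components, a path in $V$ between them, completed by its reflection, would form a loop bounding a topological disk in $V$ (by simple connectedness) that would have to contain a real point outside $V$---a contradiction. Any Riemann map from $V$ to the strip $S:=\{|\im z|<\pi\}$ can therefore be post-composed with a real Möbius automorphism of $S$ to yield a conformal isomorphism $\phi:V\to S$ with $\phi(\bar z)=\overline{\phi(z)}$. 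Under $\phi$ the cross-cuts $\gamma_i$ go to disjoint symmetric cross-cuts $c_i:=\phi(\gamma_i)$ of $S$ meeting $\R$ at unique points $p_1<p_2$, and $Q':=\phi(Q)$ inherits the same modulus $\geq 1/2$.

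In this strip model the symmetric complete geodesics are easy to identify: the conformal map $w=\exp(z/2)$ sends $S$ onto the right half-plane $\HH$, sends $\R$ to $(0,\infty)$, and sends each full vertical segment $\ell_{x_0}:=\{x_0+iy:|y|<\pi\}$ to a semicircle centered at $0$. These are geodesics of $\HH$ and are precisely the ones fixed setwise by the transported conjugation $w\mapsto\bar w$, so the complete geodesics of $S$ symmetric about $\R$ are exactly $\R$ and the $\ell_{x_0}$. Now $\overline{Q'}$ is bounded---disjointness and symmetry of $c_1,c_2$ force each $c_i$ to be a ``vertical'' cross-cut, with its two endpoints on the upper and lower components of $\partial S$---so $\R$ itself cannot fit, and it suffices to place some $\ell_{x_0}$ inside $\overline{Q'}$.

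This is the core step. If the horizontal projections $\pi(c_1),\pi(c_2)\subset\R$ are disjoint, then any $x_0$ strictly between them gives $\ell_{x_0}$ disjoint from $c_1\cup c_2$ and meeting $(p_1,p_2)\subset Q'$, hence $\ell_{x_0}\subset Q'$ by connectedness. If the projections overlap, I would apply Theorem~3.3 (Ahlfors distortion) to $Q'$ viewed as a simply connected subdomain of $\C$: denote by $\theta(x)$ the shortest vertical segment separating $p_1$ from $p_2$ in $Q'$ at real part $x$. Assuming for contradiction that no $\ell_{x_0}$ lies in $\overline{Q'}$ forces $\theta(x)<2\pi$ throughout $[p_1,p_2]$, and the length-area/Ahlfors estimate then bounds $\operatorname{mod}(Q')$ strictly below $1/2$, contradicting the hypothesis. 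It is satisfying that the threshold $1/2$ in the lemma matches exactly the hypothesis $\int dx/\theta(x)\geq 1/2$ under which the distortion inequality applies.

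Finally, pulling back, the set $\alpha:=\phi^{-1}(\ell_{x_0})\subset\overline{Q}$ is a complete geodesic of $V$ by conformal invariance of the hyperbolic metric, and it is symmetric about $\R$ because $\ell_{x_0}$ is symmetric in $S$ and $\phi$ commutes with conjugation. The principal obstacle is the overlapping-projection case above: converting ``no $\ell_{x_0}$ fits in $\overline{Q'}$'' into a strict modulus bound requires quantitatively pinning down the constants in Theorem~3.3 and careful bookkeeping of how the $c_i$ interleave, which I expect to be the main calculation; the remaining steps are more routine.
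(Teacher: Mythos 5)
Your reduction to the strip model is the same first step the paper takes: uniformize $V$ to $S$ by a symmetric Riemann map and look for a full vertical segment $\ell_{x_0}\subset\overline{Q'}$; the disjoint-projections case is also handled correctly. The gap is in the overlapping-projections case, and it is not merely a matter of ``pinning down constants'': the Ahlfors distortion inequality runs in the wrong direction for what you need. Theorem~\ref{thm:ahlforsdistortion} (and the length--area/serial estimate behind it) provides a \emph{lower} bound on the modulus in terms of $\int dx/\theta(x)$. The observation $\theta(x)<2\pi$ only gives a \emph{lower} bound on $\int dx/\theta(x)$, hence again a lower bound on $\operatorname{mod}(Q')$; it cannot force $\operatorname{mod}(Q')<1/2$. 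The apparent ``satisfying match'' with the hypothesis $\int dx/\theta\ge 1/2$ of Theorem~\ref{thm:ahlforsdistortion} is a coincidence of thresholds, not a usable implication. To convert ``no vertical segment fits'' into an upper bound on $\operatorname{mod}(Q')$ you would need a reverse distortion estimate, i.e.\ an \emph{upper} bound on $\int dx/\theta$, which you do not have and which is false in general for such configurations.

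The paper's proof sidesteps the case analysis entirely. Having placed $\wt Q$ symmetrically in the strip $S$, it applies $\exp$: the image is an annulus slit along a radial interval of the real axis, and the ring domain obtained by filling in the slit separates $0$ from $\infty$ and has modulus equal to $\operatorname{mod}(\wt Q)\ge 1/2$. Teichm\"uller's modulus theorem then says the closure of such a ring domain contains a round circle centered at the origin; its $\exp$-preimage is the desired vertical segment. I would replace your overlapping case with this argument; the rest of your set-up (symmetric uniformization, identification of the symmetric geodesics of $S$ with $\R$ and the vertical cross-cuts, and the pull-back at the end) is sound and matches the paper.
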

 \begin{proof}
   Let $S$ denote the strip $\{a+ib: |b|\leq \pi\}$ and let
    $\phi:V\to S$ be a conformal isomorphism that takes 
    $\R\cap V$ to the real axis. Set $\wt{Q} := \phi(Q)$; then
    $\wt{Q}$ is a quadrilateral in $S$, symmetric with respect to the
    real axis, of modulus at least $1/2$. We must show that
    $\wt{Q}$ contains a vertical segment connecting the two boundary 
    components of $S$. 

   The exponential map takes $\wt{Q}$ to an annulus of modulus 
   at least $1/2$, slit along an interval of the positive real axis,
    which separates $0$ from $\infty$. By Teichm\"uller's modulus
    theorem \cite[Theorem 4-7]{ahlforsconformal}, the closure of this annulus contains
    a round circle centered at the origin, which completes the proof. 
 \end{proof}

\section{Approximation of model functions} \label{sec:approx}

 We now turn to proving Theorem \ref{thm:approx}. As already mentioned,
  this result is ``best'' possible with our method,
  in the sense that the domain $H$ is chosen as
  close to a right half plane as possible while still guaranteeing
  convergence of the Cauchy integral. However, sometimes it is convenient
  to use other image domains, e.g.\ because it might be possible to
  write down an explicit mapping function for these. 
  We will therefore work in a somewhat
  more general setting. In particular, we recover the results of
  \cite{strahlen} as a special case. 

 \begin{standingassumption}[(Assumption on $H$ and $\gamma$)]%
    \label{ass:initialconfiguration}
  $H\subset\C$ is a simply connected domain containing the right half plane
   $\HH$. Furthermore, 
     \[ \alpha:(-\infty,\infty)\to H \] 
   is a piecewise
   smooth injective curve for which there 
   exist positive constants $A_1$, $A_2$, $A_3$, $A_4$ and $\Delta$ with $A_1,A_4>1$ such that, 
    for all $t\in\R$:
   \begin{enumerate}[(a)]
    \item $\re \alpha(t)\leq -13 \log_+(t)+\log A_1$.\label{item:left}
    \item $|\alpha'(t)|\leq A_2$. (If $t$ belongs to the discrete set
       where $\alpha$ is not differentiable, this means that both the left and
       right derivatives are bounded by $A_2$.) 
       \label{item:derivative}
    \item $\dist_H(\alpha(t),1+it)\leq A_3$. (Recall that
       $\dist_H$ denotes the
       hyperbolic distance in $H$.)
      \label{item:hypdist}
    \item If $\zeta\in H$ with $\dist_H(\alpha(t),\zeta)\leq \Delta$, then 
                   $\re\zeta \leq -4\log_+|t| + \log A_4$.
      \label{item:smalldisk}
   \end{enumerate}
  We refer to the pair of $H$ and $\alpha$ as the \emph{initial configuration}.
   The bounds below will depend on this initial choice.
 \end{standingassumption}
 \begin{remark}[Remark 1]
   The final two conditions may seem somewhat technical. Roughly, they
    mean that the curve $\alpha$ stays within a comparable distance from both
    $\partial H$ and the line $\{\re\zeta=1\}$; compare 
   Section~\ref{sec:initial}. 
 \end{remark}
 \begin{remark}[Remark 2]
  It is not difficult to see that the choice
    \[ H := \{x+iy: x>-14 \log_+|y|\} \]
  used in the statement of Theorem \ref{thm:approx} and the
  curve
    \[ \alpha(t) := it - 13\log_+|t| + 1 \]
  satisfy our standing assumption. For completeness, we 
  provide the argument in Section \ref{sec:initial}.
 \end{remark}
 \begin{remark}[Remark 3]
  In applications, the domain $H$ and the curve $\alpha$ will be
   fixed, so dependence on the initial configuration will not
   usually be important. However, we note 
   that our bounds will depend only on the constants
   $A_1$ to $A_4$ and $\Delta$, but not otherwise on $\alpha$ and $H$.
 \end{remark}

 \begin{standingassumption}[(Model function)]
   \label{ass:tract}
 Furthermore, 
    \[ \Psi: T\to H \]
  is a model function in the sense of Definition
  \ref{defn:modelfunction} (where $H$ is the domain from 
  Standing Assumption \ref{ass:initialconfiguration}). We additionally assume, by way of normalization, that
  that $1\in T$, $0\in \partial T$, $\dist(1,\partial T)=0$ and $\Psi(1)=1$.  

  Let $V$ be a component of $\exp^{-1}(T)$ and let
   $G:V\to H$ be the conformal isomorphism 
   $G:=\Psi\circ\exp$. We also set $g := \exp\circ\Psi$. Note that
   we have $g\circ\exp=\exp\circ G$.

 Finally, we set $\beta := G^{-1}\circ\alpha$ and
   $\gamma := \exp\circ\beta=\Psi^{-1}\circ\alpha$. 
   Let $\tilde{T}$ be the component of
   $\C\setminus\gamma$ that is contained in $T$.
 \end{standingassumption}

We will now show that (under these assumptions), we can apply Theorem \ref{thm:cauchy}
 to $T$ and a reparametrization of $\gamma$.

\begin{lem}[(Growth and distance to boundary)] \label{lem:hypdist}
  There are constants $M_1$ and $M_2$, depending only on
   the initial configuration, such that 
     \[ |\gamma(t)|\leq M_1 \cdot |t|_+^4 \quad\text{and}\quad 
      \dist(\gamma(t),\partial T)\geq 
           M_2\cdot |t|_+^{-4} \]
   for all $t\in\R$. 
\end{lem}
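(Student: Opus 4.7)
The plan is to reduce both assertions to a hyperbolic-distance estimate in $T$ and then convert to Euclidean data using the normalization at the base point $1\in T$ together with the simply-connected estimates from Section~\ref{sec:hyperbolicmetric}.

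First I would establish that $\dist_T(1,\gamma(t))\leq 2\log|t|_++O(1)$. Since $\Psi:T\to H$ is a hyperbolic isometry (Proposition~\ref{prop:hyp_metric}(\ref{item:isomorphism})), $\dist_T(1,\gamma(t))=\dist_H(1,\alpha(t))$. By the triangle inequality and condition~(\ref{item:hypdist}) of Standing Assumption~\ref{ass:initialconfiguration},
\[ \dist_H(1,\alpha(t)) \leq \dist_H(1,1+it)+A_3. \]
Since $\HH\subset H$, Pick's theorem (Proposition~\ref{prop:hyp_metric}(\ref{item:pick})) gives $\dist_H(1,1+it)\leq\dist_\HH(1,1+it)$, and the explicit form of $\rho_\HH$ in Proposition~\ref{prop:hyp_metric}(\ref{item:halfplane}), integrated along the geodesic from $1$ to $1+it$, shows that this is $\leq 2\log|t|_++O(1)$.

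Next, for the upper bound on $|\gamma(t)|$, I would apply Lemma~\ref{lem:distance} with $V=T$, $z=1$, and $w=\gamma(t)$, and invert. Using the normalization of $\dist(1,\partial T)$, this yields
\[ |\gamma(t)-1| \leq \dist(1,\partial T)\cdot\left(e^{2\dist_T(1,\gamma(t))}-1\right) = O(|t|_+^4), \]
and hence $|\gamma(t)|\leq M_1|t|_+^4$. For the lower bound on $\dist(\gamma(t),\partial T)$, I would use the standard consequence of the Koebe distortion theorem that, for any simply connected domain $U$ and $z,w\in U$,
\[ \rho_U(z) \leq \rho_U(w)\,e^{2\dist_U(z,w)} \]
(obtained by transferring to the unit disk via a uniformizing map and applying Koebe distortion together with the isometry property of $\rho_U$). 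Applied with $U=T$, $w=1$, and $z=\gamma(t)$, together with the hyperbolic distance bound, this gives $\rho_T(\gamma(t))=O(|t|_+^4)$; here I use that $\rho_T(1)$ is bounded, which follows from the normalization and Proposition~\ref{prop:hyp_metric}(\ref{item:simplyconnected}). The other half of the same item then yields
\[ \dist(\gamma(t),\partial T)\geq\frac{1}{2\rho_T(\gamma(t))}\geq\frac{M_2}{|t|_+^4}. \]

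The main subtlety is the final step, where we must convert a hyperbolic-density upper bound into a Euclidean lower bound on distance to $\partial T$; this is where the Koebe-ratio inequality above is essential, although it is not stated explicitly in the excerpt. All remaining constants depend only on $A_3$ and the normalization at $1$, so they depend only on the initial configuration, as required. The exponent $4$ in both conclusions arises naturally by combining the factor $\tfrac{1}{2}$ in Lemma~\ref{lem:distance} (respectively the factor $2$ in the Koebe-ratio inequality) with the $2\log|t|_+$ growth of the underlying hyperbolic distance.
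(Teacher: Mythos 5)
Your proposal is correct, but it is structured differently from the paper's proof at both halves. For the upper bound on $|\gamma(t)|$ the paper passes to logarithmic coordinates: it bounds $\re\beta(t)$ (where $\gamma=\exp\circ\beta$) using the Ahlfors distortion theorem in the domain $V\subset\exp^{-1}(T)$, exploiting the fact that $V$ contains no vertical segments longer than $2\pi$; you instead apply Lemma~\ref{lem:distance} directly in $T$ with base point $z=1$ and $w=\gamma(t)$, invert, and use $\dist(1,\partial T)=1$. Your route is shorter for this estimate since it avoids the detour through log coordinates and a deeper theorem, while still landing on the same exponent $4$. For the lower bound on $\dist(\gamma(t),\partial T)$ the situation is reversed: the paper again uses Lemma~\ref{lem:distance}, this time with $z=\gamma(t)$ as the base point and $w=1$, together with $|z-1|\ge 1-\delta$; you instead introduce the Koebe-ratio density inequality $\rho_U(z)\le\rho_U(w)e^{2\dist_U(z,w)}$ for simply connected $U$ (a correct but unstated consequence of Koebe distortion), bound $\rho_T(\gamma(t))$ in terms of $\rho_T(1)$, and then convert via Proposition~\ref{prop:hyp_metric}~(\ref{item:simplyconnected}). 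Both give $\Omega(|t|_+^{-4})$; the paper's version stays strictly inside the toolbox already assembled in Section~\ref{sec:hyperbolicmetric}, whereas yours requires the extra Koebe-ratio lemma but is otherwise self-contained. The dependence of the constants on the initial configuration is tracked correctly in both.
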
 
\begin{proof}
We set $C:=A_3+1$. Using the fact that hyperbolic distances in $H$ are smaller than those 
  in the half plane $\HH$ (recall Proposition \ref{prop:hyp_metric}), we see that
   \begin{align*}
         \dist_{H}(1,\alpha(t)) &\leq
          \dist_{\HH}(1,|t|_+)+
          \dist_{\HH}(|t|_+,|t|_++ti) \\
        &\quad + \dist_{\HH}(|t|_++ti,1+ti) 
        +\dist_H(1+ti,\alpha(t)) \\ &\leq
          \log_+ |t| + 1 + \log_+|t| + A_3 =
           C + 2\log_+|t|.
   \end{align*}
   
  We now use the Ahlfors distortion theorem,  Theorem
   \ref{thm:ahlforsdistortion}, 
   to deduce the desired estimate. Let $\sigma_1$ be the maximal vertical
   line segment in $V$ containing $0=G^{-1}(1)$, and let $\sigma_2$ be the maximal
   vertical line segment containing $\beta(t)$. Set 
   $S=\{a+ib:|b|<\pi\}$ and let $\phi:V\to S$ be a conformal 
   isomorphism such that $\re \phi(0)=0$, such that
   $\phi(\sigma_1)$ and $\phi(\sigma_2)$ both connect the upper and lower
   boundaries of $S$, and such that $\phi(\sigma_2)$ is to the right
   of $\phi(\sigma_1)$. This is always possible: pick two prime ends
   $\zeta_1$ and $\zeta_2$ (if $V$ is a Jordan domain, this simply
   means picking two points on $\partial V$) such that $\sigma_1$ separates
   $\zeta_1$ from $\sigma_2$ and $\sigma_2$ separates $\sigma_1$ from
   $\zeta_2$. We then choose $\phi$ such that $\phi(\sigma_1)=-\infty$
   and $\phi(\sigma_2)=+\infty$. 
 
  Recall that
   $V$ does not intersect its own translates by integer multiples of $2\pi i$,
   and hence does not contain any vertical segments of height 
   greater than $2\pi$.
    If
    $\re\beta(t)\geq \pi$, then Theorem
    \ref{thm:ahlforsdistortion} 
    and Proposition \ref{prop:hyp_metric} (\ref{item:strip}) imply that 
   \[ 2\dist_S(\phi(0),\phi(\beta(t))) \geq 
        |\phi(\beta(t)) - \phi(0)| \geq
          \re\beta(t) - D, \]
    where $D=2\log 32$ is a universal constant. Thus
     \[ \re\beta(t) \leq \max(\pi,D + 2C) + 4\log_+|t|. \]
     Recalling that $\gamma(t)=\exp(\beta(t))$, the first claim is proved.

  Similarly, we can estimate 
   $\dist(\gamma(t),\partial T)$, using Lemma \ref{lem:distance}. 
   Indeed, set $z := \gamma(t)$ and 
    $\delta := \dist(z,\partial T)$. Recall that $\dist(1,\partial T)=1$, so 
    $|z-1| \geq 1 - \delta$, and hence
    \[ C+2\log_+|t|\geq \dist_H(1,\alpha(t)) = 
        \dist_T(1,z)\geq \frac{1}{2}\log\left(1+\frac{|z-1|}{\delta}\right) \geq 
          \frac{1}{2}\log\frac{1}{\delta} \]
    by Lemma \ref{lem:distance}. Exponentiating this inequality and
     rearranging, we see that
      \[ \delta \geq (|t|_+)^{-4}\cdot \exp(-2C), \]
   as desired. 
\end{proof}

\begin{cor} \label{cor:deriv}
  There is a constant $M_3$, depending only on the initial configuration, such that
    \[ |\gamma'(t)| \leq M_3\cdot |t|_+^4 \]
   for all $t\in\R$.

  Furthermore, there is a constant $M_4$, depending only on the
   initial configuration, with
   the following property. If $t\in\R$ and $z\in\C$ with 
   \[ |z-\gamma(t)|\leq M_4\cdot |t|_+^{-4}, \]
   then $z\in T$ and $|g(z)|\leq A_4\cdot |t|_+^{-4}$. 
\end{cor}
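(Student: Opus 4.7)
The plan is to prove the two statements separately, exploiting the fact that $\gamma=\Psi^{-1}\circ\alpha$ and that $\Psi\colon T\to H$ is a hyperbolic isometry (Proposition~\ref{prop:hyp_metric}(d)).

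For the derivative bound, I would differentiate to obtain $\gamma'(t)=(\Psi^{-1})'(\alpha(t))\cdot\alpha'(t)$. The factor $|\alpha'(t)|$ is bounded by $A_2$ via Standing Assumption~\ref{ass:initialconfiguration}(b). The other factor I would rewrite using conformal invariance as
$$|(\Psi^{-1})'(\alpha(t))|\;=\;\frac{\rho_H(\alpha(t))}{\rho_T(\gamma(t))}.$$
For the numerator, since $\HH\subset H$ we have $\C\setminus H\subset\{\re z\leq 0\}$, hence $\dist(1+it,\partial H)\geq 1$ and $\rho_H(1+it)\leq 2$ by Proposition~\ref{prop:hyp_metric}(e); the bound $\dist_H(\alpha(t),1+it)\leq A_3$ from Standing Assumption~\ref{ass:initialconfiguration}(c), combined with the standard Harnack-type comparison of hyperbolic densities along a geodesic, then yields an upper bound on $\rho_H(\alpha(t))$ depending only on $A_3$. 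For the denominator, Proposition~\ref{prop:hyp_metric}(e) and the normalization $0\in\partial T$ give
$$\frac{1}{\rho_T(\gamma(t))}\;\leq\;2\dist(\gamma(t),\partial T)\;\leq\;2|\gamma(t)|\;\leq\;2M_1|t|_+^4,$$
using Lemma~\ref{lem:hypdist}. Multiplying the three bounds produces $|\gamma'(t)|\leq M_3|t|_+^4$ for a suitable $M_3$.

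For the disk statement, my strategy is to choose $M_4$ small enough that the Euclidean disk of radius $M_4|t|_+^{-4}$ around $\gamma(t)$ is both contained in $T$ and mapped by $\Psi$ into the hyperbolic $\Delta$-ball around $\alpha(t)$ in $H$, to which Standing Assumption~\ref{ass:initialconfiguration}(d) applies directly. Concretely, set $\Delta':=\min(\Delta,2)$ and $M_4:=\Delta'M_2/4$, with $M_2$ from Lemma~\ref{lem:hypdist}. If $|z-\gamma(t)|\leq M_4|t|_+^{-4}$, then a fortiori $|z-\gamma(t)|\leq M_2|t|_+^{-4}\leq\dist(\gamma(t),\partial T)$, so $z\in T$. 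Lemma~\ref{lem:disks} then gives $\dist_T(z,\gamma(t))\leq\Delta'\leq\Delta$, and conformal invariance of the hyperbolic metric yields $\dist_H(\Psi(z),\alpha(t))\leq\Delta$. Standing Assumption~\ref{ass:initialconfiguration}(d) now produces $\re\Psi(z)\leq -4\log_+|t|+\log A_4$, whence $|g(z)|=e^{\re\Psi(z)}\leq A_4|t|_+^{-4}$.

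The only mildly non-routine step is the lower bound on $\dist(\alpha(t),\partial H)$ (equivalently, the upper bound on $\rho_H(\alpha(t))$) used in Part~1; the remainder is a direct assembly of Lemmas~\ref{lem:hypdist} and~\ref{lem:disks} together with the hypotheses of Standing Assumption~\ref{ass:initialconfiguration}.
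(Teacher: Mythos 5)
Your proposal is correct and, for the disk statement (Part~2), identical to the paper's proof. For the derivative bound (Part~1) the paper works in logarithmic coordinates, writing $\gamma'(t) = \exp(\re\beta(t))\cdot(G^{-1})'(\alpha(t))\cdot\alpha'(t)$ and using that $\dist(\beta(t),\partial V)\le\pi$ (because $V$ is disjoint from its $2\pi i$-translates) together with $\exp(\re\beta(t))=|\gamma(t)|\le M_1|t|_+^4$; you instead factor through $\Psi^{-1}$ directly and absorb the growth into $1/\rho_T(\gamma(t))\le 2\dist(\gamma(t),\partial T)\le 2|\gamma(t)|$ via the normalization $0\in\partial T$. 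Both decompositions yield the same bound and use the same ingredients (the density ratio, Lemma~\ref{lem:hypdist}, and the lower bound on $\dist(\alpha(t),\partial H)$ from condition~(c)), so this is a cosmetic variation rather than a genuinely different argument; one could also replace your appeal to a ``Harnack-type'' density comparison by the paper's more elementary route of bounding $\dist(\alpha(t),\partial H)$ below directly via Lemma~\ref{lem:distance} and then applying Proposition~\ref{prop:hyp_metric}(e).
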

\begin{proof}
We have 
     \[ |\gamma'(t)| = |\alpha'(t)|\cdot |(G^{-1})'(\alpha(t))|\cdot 
                          \exp(\re\beta(t)). \] 
  The first term is bounded by our standing assumption that
    $|\alpha'(t)|\leq A_2$. To estimate the second term, we use hyperbolic
    geometry: $G$ is a conformal isomorphism and $V$ and $H$ are both
    simply connected, so
     \[ (G^{-1})'(\alpha(t)) =
           \frac{\rho_{H}(\alpha(t))}{\rho_V(\beta(t))} 
   \leq 4\frac{\dist(\beta(t),\partial V)}{\dist(\alpha(t),\partial H)}. \]
     Since $\exp$ is injective on $V$,
     we have  $\dist(z,\partial V)\leq \pi$ for all $z\in V$.
    We also note that $\dist(\alpha(t),\partial H)\geq 1/D$ for
     some constant $D$ that depends only on $A_3$. Indeed, if
     $\re \alpha(t)\geq 1/2$, there is nothing to prove (since
     $\HH\subset H$). Otherwise, we have
     $\dist(\alpha(t),1+it)\geq 1/2$ and 
     $\dist_H(\alpha(t),1+it)\leq A_3$, and the claim follows from
     Lemma \ref{lem:distance}. 

  Finally, 
     \[ \exp(\re\beta(t)) = 
        |\gamma(t)|\leq M_1 \cdot |t|_+^4 \]
  by 
    Lemma \ref{lem:hypdist}.
  Combining these estimates, we see that
    \[ |\gamma'(t)| \leq 
      4\pi\cdot A_2\cdot D \cdot M_1\cdot  |t|_+^4. \]

  To prove the second claim, let us assume without loss of generality that
   $\Delta\leq 2$ and set
\[      M_4 :=  \frac{M_2\cdot \Delta}{4}. \]
   Suppose that $t\in\R$ and $z\in\C$ are as in the claim; then 
\[
       |z-\gamma(t)|\leq M_4 \cdot |t|_+^{-4}  
          = \frac{\Delta}{4}\cdot M_2 \cdot |t|_+^{-4}
\leq \frac{\Delta}{4}\cdot \dist(\gamma(t),\partial T) \]
    by Lemma \ref{lem:hypdist}. Hence we can apply Lemma \ref{lem:disks} to see that
    $\dist_T(z,\gamma(t)) \leq \Delta$, and thus
    $\dist_H(\Psi(z),\alpha(t))\leq\Delta$.  By the standing assumption, it follows that
   \[
     |g(z)| = \exp(\re \Psi(z)) \leq A_4\cdot |t|_+^{-4}. \qedhere
    \]
\end{proof}

 We now ready to apply Theorem \ref{thm:cauchy} to conclude:

\begin{cor}[(Approximation by entire functions)] \label{cor:approximation}
  In the setting of Standing Assumption \ref{ass:initialconfiguration},
  let $\Psi:T\to H$ be any model function. Define $g(z) := \exp(\Psi(z))$, $\gamma := \Psi^{-1}\circ\alpha$ and $z_0 := \Psi^{-1}(1)$. Then 
    \[ h(z) := \frac{1}{2\pi i}\int_{\gamma} \frac{g(\zeta)}{\zeta-z}d\zeta  \]
   defines a holomorphic function for $z\notin\gamma$. This function satisfies
     \[ |h(z)|\leq M_5 \quad\text{and}\quad |h(z)| \leq \max(|z_0|_+,\dist(z_0,\partial T))\cdot \frac{M_6}{|z|_+} \]
   for all $z$. Here the constants $M_5$ and $M_6$ depend only on the initial configuration. 

   Furthermore, 
    \[ f(z) := \begin{cases}
                h(z)+g(z) & z\in\tilde{T} \\
                 h(z) & z\notin \tilde{T} \end{cases} \]
    extends to an entire function $f:\C\to\C$ with
    $S(f)\subset\cl{\D_{2M_5}(0)}$. 
\end{cor}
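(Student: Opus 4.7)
The proof reduces to verifying the hypotheses of Theorem~\ref{thm:cauchy} for a reparametrized curve, then invoking that theorem. I first treat the normalized case under Standing Assumption~\ref{ass:tract}, for which Lemma~\ref{lem:hypdist} and Corollary~\ref{cor:deriv} already supply all the polynomial estimates needed. The general (non-normalized) case then follows by an affine change of variables, and the bound on $S(f)$ follows from Proposition~\ref{prop:logtract}.

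\textbf{Normalized case.} Under Standing Assumption~\ref{ass:tract}, hypothesis~(\ref{item:left}) gives $|g(\gamma(t))| = \exp(\re\alpha(t)) \leq A_1 |t|_+^{-13}$. Combined with $|\gamma(t)| \leq M_1 |t|_+^{4}$, $|\gamma'(t)| \leq M_3 |t|_+^{4}$, and the disk condition of Corollary~\ref{cor:deriv} (radius and $|g|$-bound both of order $|t|_+^{-4}$), all hypotheses of Theorem~\ref{thm:cauchy} are satisfied \emph{except} the linear bound~(\ref{item:gamma}), since $|\gamma|$ grows like $|t|^{4}$ rather than $|t|$. To remedy this, reparametrize $\gamma$ by an orientation-preserving smooth diffeomorphism $\psi:\R\to\R$ with $\psi(t) = \sign(t) \cdot M_1 |t|^4$ for $|t|$ large. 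Writing $\tilde\gamma(\tau) := \gamma(\psi^{-1}(\tau))$, a direct check shows $|\tilde\gamma(\tau)|\leq C_1 |\tau|_+$, $|\tilde\gamma'(\tau)|\leq C_2 |\tau|_+^{1/4}$ (giving (\ref{item:gamma'}) with $\delta_1=1/4$), and both the decay of $|g\circ\tilde\gamma|$ and the disk condition translate to the required form with $\delta_2 = 1$. Crucially $2+\delta_1+\delta_2 = 13/4$ matches exactly the exponent to which the original $-13$ is converted by the reparametrization, so condition~(\ref{item:g}) holds with equality in the exponent. Theorem~\ref{thm:cauchy} then yields $|h(z)|\leq M_6/|z|_+$ with $M_6$ depending only on the initial configuration, the extension of $f$ to an entire function $\C\to\C$, and---since $|z|_+\geq 1$---the uniform bound $|h(z)|\leq M_6 =: M_5$.

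\textbf{Singular set.} Enlarging $M_5$ if needed so that $M_5\geq 1$: outside $\tilde T$, $|f|=|h|\leq M_5$, so any $f$-preimage of a value $w$ with $|w|>2M_5$ lies in $\tilde T$; and on $\tilde T$, $|f-g|=|h|\leq M_5$ forces such a preimage to lie in $U := \{z\in\tilde T : |g(z)|>M_5\}$. Since $\Psi$ maps $U$ conformally onto $\{w\in H : \re w > \log M_5\}$, which is a translate of $\HH$, the argument of Proposition~\ref{prop:logtract} applies verbatim (with $\HH$ replaced by that translate) to show that $f : \{z\in U : |f(z)|>2M_5\} \to \{|w|>2M_5\}$ is a universal covering. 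Consequently $S(f) \subset \cl{\D_{2M_5}(0)}$.

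\textbf{Non-normalized case.} For a general $\Psi:T\to H$, let $r := \dist(z_0,\partial T)$ and take $L(w) := e^{i\theta}(w-z_0)/r + 1$ for a rotation angle $\theta$ placing the closest boundary point of $L(T)$ at $0$. Then $\Psi^* := \Psi\circ L^{-1}$ is normalized, and substituting $\zeta = L^{-1}(\xi)$ in the Cauchy integral gives $h(z) = h^*(L(z))$. Since $M_5$ in the normalized case depends only on the initial configuration, the uniform bound $|h(z)|\leq M_5$ persists. A direct estimate gives $|L(z)|_+ \geq |z|/(2\max(|z_0|_+, r))$ outside a bounded set, and combining this with the normalized inequality $|h^*(w)|\leq M_6/|w|_+$ yields the second bound. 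The least routine step throughout is the reparametrization: the exponent balance in (\ref{item:g}) is exactly tight, and it is the $-13$ exponent built into (\ref{item:left}) that just makes the argument succeed.
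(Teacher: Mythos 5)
Your proof is correct and follows the same route as the paper: reparametrize $\gamma$ by $\tau\sim t^4$ so that Lemma~\ref{lem:hypdist} and Corollary~\ref{cor:deriv} give the hypotheses of Theorem~\ref{thm:cauchy} with $\delta_1=1/4$, $\delta_2=1$ (exploiting the exact exponent match $2+1/4+1=13/4$), invoke Proposition~\ref{prop:logtract} for the singular set, and reduce the general case to the normalized one by an affine change of variables. Your explicit remark that Proposition~\ref{prop:logtract} is being applied with a translate of $\HH$ in place of $\HH$ is a small but useful clarification that the paper leaves implicit.
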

\begin{proof}
 Let us first assume that $T$ and $\Psi$ are normalized as in Standing Assumption \ref{ass:tract}; i.e.,\
   $0\in \partial T$, $\dist(1,\partial T)=1$ and $\Psi(1)=1$. 

 We reparametrize the curve $\gamma$ by the substitution
   \[ \tau := \begin{cases}
                t^4 & t\geq 1 \\
                t & |t|<1 \\
                -t^4 & t\leq -1. \end{cases} \] 
 By Lemma \ref{lem:hypdist} and Corollary \ref{cor:deriv}, we have 
   \begin{enumerate}[(a)] 
     \item $|\gamma(\tau)| \leq 
                M_1\cdot |\tau|_+$;
     \item $|d\gamma(\tau)/d\tau|= \frac{1}{4}\cdot |\gamma'(t)|\cdot |\tau|_+^{-3/4} \leq
                    \frac{M_4}{4}\cdot |\tau|_+^{1/4}$;
     \item $|g(\gamma(\tau))|=\exp(\re\alpha(t))\leq 
             A_1\cdot |t|_+^{-13} = A_1\cdot |\tau|_+^{-(2+1/4+1)}$;
     \item 
        if $|z-\gamma(\tau)|\leq M_4\cdot |\tau|_+^{-1}$, then
        $z\in T$ and $|g(z)|\leq A_4\cdot |\tau|_+^{-1}$. 
   \end{enumerate} 
 
 Thus the claims on the convergence and asymptotics of $h(z)$ and the analytic continuation 
  of $f(z)$ follow from Theorem \ref{thm:cauchy}. The claim regarding the singular values
  of $f$ follows from Proposition \ref{prop:logtract}.

 For general $T$ and $\Psi$, we normalize and apply the case that was just established. More precisely, 
  set $z_0 := \Psi^{-1}(1)$ and let $a\in \partial T$ be a point whose distance to $z_0$ is minimal;
  define $\alpha := z_0-a$. Then the function 
   \[ \tilde{\Psi}(z) := \Psi\left(\alpha z + a\right) \]
  satisfies Standing Assymption \ref{ass:tract}. Let $\tilde{h}$ be the corresponding function
     \[ \tilde{h}(z) =  \frac{1}{2\pi i}\int_{\tilde{\gamma}} \frac{\tilde{g}(\zeta)}{\zeta-z}d\zeta =
            h(\alpha z + a), \]
   where $\tilde{\gamma}(t)=(\gamma(t)-a)/\alpha$. As we have just seen, 
      \[ |h(z)| = \left|\tilde{h}\left(\frac{z - a}{\alpha} \right)\right|\leq \frac{M_5}{|(z-a)/\alpha|_+} \leq M_5 \]
    for a constant $M_5$ depending only on the initial configuration. It is elementary to verify that
       \[ \left|\frac{z-a}{\alpha}\right|_+ \geq \frac{|z|_+}{2\max(|a|_+,|\alpha|_+)}. \]
  (E.g., distinguish between the cases $|z|<2|a|_+$ and $|z|\geq 2|a|_+$.) 
    Hence 
     \[ |h(z)| \leq 2\max(|a|_+,|\alpha|_+)\cdot \frac{M_5}{|z|_+} \leq 4\max(|z_0|_+,|\alpha|)\cdot \frac{M_5}{|z|_+}, \]
   as desired. 
\end{proof}
%
%

\begin{rem}[(Dependence of the bounds in Theorem {\ref{thm:approx}})]
 \label{rem:bounddependence}
 In the next section, we shall carry out the simple verification that the choice of $H$ and $\gamma$ from Theorem
   \ref{thm:approx} satisfies Standing Assumption \ref{ass:initialconfiguration}. Hence Corollary \ref{cor:approximation} implies
    the theorem.

 In particular, we note that the approximating function $f$ from Theorem \ref{thm:approx} satisfies
   $|f(z)|\leq M$ outside $T$ and $|f(z)-g(z)|\leq M$ in $T$, where $M=M_5$ is a universal constant. 
   This fact will be used in the second part of the paper. 
\end{rem}

\section{Valid initial configurations} \label{sec:initial}

 \begin{prop}
  Let $\rho:\R\to [0,\infty)$ be a continuous function that is
   increasing on $[0,\infty)$ and decreasing on $(-\infty,0]$. Set
    \[ H := \{z\in\C: \re z > -\rho(\im z)\}. \]
   Suppose that 
     \[ \alpha:(-\infty,\infty)\to H \] 
   is a piecewise
   smooth injective curve such that 
   (\ref{item:left}) and (\ref{item:derivative}) of Standing
   Assumption \ref{ass:initialconfiguration} are satisfied.
   If furthermore
    $\im \alpha(t) = t$ for all $t\in\R$ and  
     \[ c\cdot |\re \alpha(t) | \leq
          \dist(\alpha(t),\partial H) \leq C |\re\alpha(t)| \]
    when $|t|$ is sufficiently large, where $c$ and $C$ are positive constants,
   then Standing
   Assumption \ref{ass:initialconfiguration} is satisfied. 
 \end{prop}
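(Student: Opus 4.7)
Conditions~(\ref{item:left}) and~(\ref{item:derivative}) of Standing Assumption~\ref{ass:initialconfiguration} are exactly those assumed in the proposition, so the task reduces to verifying~(\ref{item:hypdist}) and~(\ref{item:smalldisk}); bounded $|t|$ is handled by compactness and continuity of the hyperbolic metric, and throughout I would focus on $|t|$ large.

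For~(\ref{item:smalldisk}), the key tool is Lemma~\ref{lem:distance}. If $\zeta\in H$ satisfies $\dist_H(\alpha(t),\zeta)\leq \Delta$, then
\[ |\zeta-\alpha(t)|\leq \dist(\alpha(t),\partial H)(e^{2\Delta}-1)\leq C|\re\alpha(t)|(e^{2\Delta}-1) \]
by the upper Euclidean hypothesis. Since~(\ref{item:left}) forces $\re\alpha(t)<0$ for $|t|$ large, this yields $\re\zeta\leq \re\alpha(t)\bigl(1-C(e^{2\Delta}-1)\bigr)$; choosing $\Delta$ small enough that $C(e^{2\Delta}-1)\leq 9/13$ and combining with~(\ref{item:left}) gives the desired inequality $\re\zeta\leq -4\log_+|t|+\log A_4$ for a suitable constant $A_4$.

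For~(\ref{item:hypdist}), my plan is to bound the hyperbolic length of the horizontal segment $\sigma(x)=x+it$, $x\in[\re\alpha(t),1]$, using the density estimate $\rho_H\leq 2/\dist(\cdot,\partial H)$ from Proposition~\ref{prop:hyp_metric}. Writing $r_t:=|\re\alpha(t)|$, the first step is a preliminary comparison: the lower Euclidean hypothesis combined with the elementary upper bound $\dist(\alpha(t),\partial H)\leq \rho(t)-r_t$ (horizontal distance at height $t$) gives $\rho(t)\geq (1+c)r_t$, while a Pythagorean argument based on the monotonicity of $\rho$ and the upper Euclidean hypothesis shows the boundary point realizing $\dist(\alpha(t),\partial H)$ must lie near height $t$, forcing $\rho(t)\leq M\cdot r_t$ for some constant $M$. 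The same Pythagorean argument applied pointwise along $\sigma$ yields $\dist(x+it,\partial H)\geq x+\rho(t)$ for $x\in[\re\alpha(t),1]$, so integration gives
\[ \int_{-r_t}^{1}\frac{2\,dx}{x+\rho(t)}=2\log\frac{1+\rho(t)}{\rho(t)-r_t}\leq 2\log\frac{1+Mr_t}{c\,r_t}, \]
which is bounded independently of $t$ because $r_t\geq 13\log_+|t|-\log A_1$ is bounded below for $|t|$ large.

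The main obstacle is establishing the pointwise lower bound $\dist(x+it,\partial H)\geq x+\rho(t)$: one must rule out that the minimum over $s\in\R$ of $\sqrt{(x+\rho(s))^2+(t-s)^2}$ is attained at a ``neck'' of $H$ with $\rho(s)$ small. The monotonicity of $\rho$ ensures that such necks lie only near $s=0$, which for $|t|$ large is at height at least $|t|\gg Cr_t$ from $\sigma$, so the Pythagorean distance through any such neck greatly exceeds the budget $Cr_t$ permitted by the upper Euclidean hypothesis; this is the point at which both structural assumptions of the proposition enter essentially.
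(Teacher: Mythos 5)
Your verification of condition~(\ref{item:smalldisk}) is correct and matches the paper's argument: apply Lemma~\ref{lem:distance}, use the upper bound $\dist(\alpha(t),\partial H)\leq C|\re\alpha(t)|$, and choose $\Delta$ small enough that the resulting Euclidean disk has radius at most $(9/13)|\re\alpha(t)|$, so that $\re\zeta\leq(4/13)\re\alpha(t)\leq -4\log_+|t|+\tfrac{4}{13}\log A_1$. Fine.

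The argument for~(\ref{item:hypdist}), however, has a genuine gap, and it is exactly at the step you flag as the ``main obstacle.'' The inequality
\[
\dist(x+it,\partial H)\;\geq\; x+\rho(t)
\]
is in the wrong direction and cannot hold in general: $x+\rho(t)$ is precisely the distance from $x+it$ to the boundary point $-\rho(t)+it$ at the same height, and is therefore an \emph{upper} bound for $\dist(x+it,\partial H)$, with equality only when that horizontal foot happens to be the nearest boundary point. Concretely, if $\rho(s)=\max(0,|s|-1)$ and $t=10$, $x=0$, then $x+\rho(t)=9$ while $\dist(x+it,\partial H)=9/\sqrt{2}<9$. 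No Pythagorean or ``neck-avoidance'' argument can repair this, because the failure already occurs when the minimizing boundary point lies arbitrarily close to height $t$: for $|s^*|$ slightly less than $|t|$ one has $\rho(s^*)<\rho(t)$, which is enough to make the perpendicular distance strictly smaller than the horizontal one. Consequently the density bound $\rho_H(x+it)\leq 2/(x+\rho(t))$ that your integral relies on is unjustified.

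The fix is simpler than what you attempt and sidesteps the preliminary comparisons $\rho(t)\geq(1+c)r_t$ and $\rho(t)\leq Mr_t$ entirely (the latter, incidentally, is also not obvious without further argument). Because $H=\{\re z>-\rho(\im z)\}$ is invariant under translation to the right, one has, for any fixed height $b$ and any $-\rho(b)<a_1<a_2$,
\[
\dist(a_1+bi,\partial H)\;\leq\;\dist(a_2+bi,\partial H):
\]
if some $w\notin H$ satisfied $|w-(a_2+bi)|<\dist(a_1+bi,\partial H)$, then $w-(a_2-a_1)$ would be a point outside $H$ within the same distance of $a_1+bi$, a contradiction. With this monotonicity you may bound the density pointwise along the horizontal segment by the \emph{constant} $2/\dist(\alpha(t),\partial H)$, giving
\[
\dist_H(\alpha(t),1+it)\;\leq\;\int_0^{1-\re\alpha(t)}\frac{2\,dx}{\dist(\alpha(t),\partial H)}
\;\leq\;\frac{2(1-\re\alpha(t))}{c\,|\re\alpha(t)|}\;\leq\;\frac{4}{c}
\]
once $|t|$ is large enough that $\re\alpha(t)<-1$; only the lower Euclidean hypothesis is needed here. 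This is exactly the route the paper takes, and it replaces your false pointwise lower bound with a trivially correct one.
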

 \begin{proof}
  Observe first that, if $a_1,a_2,b\in\R$ are such that
   $-\rho(b)<a_1<a_2$, then
    \[ \dist(a_1+bi,\partial H) < \dist(a_2+bi,\partial H). \]

  Now let $t\in\R$; by
   (\ref{item:left}) of Standing Assumption \ref{ass:initialconfiguration},
   we may assume that $|t|$ is sufficiently large that
   $\re\alpha(t)<-1$. Then
    \begin{align*} \dist_H(\alpha(t),1+it) &\leq
        \int_{0}^{1-\re\alpha(t)} \rho_H(\alpha(t)+x)dx \leq
       \int_{0}^{1-\re\alpha(t)} \frac{2\,dx}{\dist(\alpha(t),\partial H)} \\ &\leq
       2\frac{1-\re\alpha(t)}{c\cdot |\re\alpha(t)|} \leq
       \frac{4}{c}.
    \end{align*}
 So requirement (\ref{item:hypdist}) holds when $|t|$ is large enough. 
  
  Finally, let $\Delta>0$. Then by Lemma
   \ref{lem:distance}, the hyperbolic disk of radius $\Delta$ around 
   $\alpha(t)$ is contained in the Euclidean disk around $\alpha(t)$ of radius
   \[ (e^{2\Delta}-1)\cdot \dist(\alpha(t),\partial H). \]
  On the other hand, we have  
   \[ -\re\alpha(t) > 13 \log_+|t| - \log A_1 > 8\log_+|t|, \]
  provided $|t|$ is sufficiently large, and hence
    \[ -\re\alpha(t) - 4\log_+|t| > -\re\alpha(t)/2 > \frac{\dist(\alpha(t),\partial H)}{2C}. \] 
  So if we choose $\Delta := \log(1+1/(4C))/2$, we have 
    \[ \re \zeta < -4\log_+|t| \]
  whenever $\dist_H(\zeta,\gamma(t))\leq \Delta$ (still under the assumption
  that $t$ is sufficiently large). Hence requirement (\ref{item:smalldisk}) also holds when $|t|$ is sufficiently large.

 We obtain the requirements for all $t\in\R$ by choosing  $A_3$ and
   $A_4$ sufficiently large.
 \end{proof}

 \begin{cor}[(Two valid initial configurations)] \label{cor:initial}
  Set either
    \begin{align*} H &:= \{x+iy: x>-14 \log_+|y|\} \quad\text{and}
   \\ \alpha(t) &:= it - 13\log_+|t| + 1 
    \end{align*}
  or
   \begin{align*} 
      H &:= \{ x + iy: x > -2M\cdot |y| \} \quad\text{and} 
   \\ \alpha(t) &:= it - M|t| + 1 \end{align*}
   (where $M>0$ is a constant). 

 Then $H$ and $\alpha$ satisfy the conditions of Standing Assumption
   \ref{ass:initialconfiguration}.
 \end{cor}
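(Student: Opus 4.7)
The plan is to apply Proposition~5.1 to each of the two configurations, reducing the corollary to a short verification of its hypotheses. In both cases $H$ has the form $\{z : \re z > -\rho(\im z)\}$ with $\rho$ continuous, monotone increasing on $[0,\infty)$ and monotone decreasing on $(-\infty,0]$: one takes $\rho(y) = 14\log_+|y|$ in the first case and $\rho(y) = 2M|y|$ in the second. In both cases $\alpha$ is piecewise smooth and injective, with $\im\alpha(t) = t$, so the geometric framework of Proposition~5.1 applies.

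Next I would verify conditions~(a) and~(b) of Standing Assumption~4.1 by direct computation. Condition~(a) is immediate in case~1 (with $A_1 = e$). In case~2, it suffices to observe that $13\log_+|t| - M|t|$ is bounded above on $\R$ since the linear term dominates for $|t| \geq 13/M$, so any $A_1$ exceeding $\exp\bigl(1 + \sup_{t\in\R}(13\log_+|t| - M|t|)\bigr)$ works. For condition~(b), differentiation gives $|\alpha'(t)|^2 \leq 1 + 169/t^2 \leq 170$ away from $|t|=1$ in case~1 and $|\alpha'(t)|^2 = 1 + M^2$ away from $t=0$ in case~2; the one-sided derivatives at the corners obey the same bounds.

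The main remaining hypothesis is the two-sided Euclidean distance estimate $c\,|\re\alpha(t)| \leq \dist(\alpha(t),\partial H) \leq C\,|\re\alpha(t)|$ for large $|t|$. In case~2 this is immediate from the point-to-line distance formula: $\partial H$ consists of two half-lines $x = \pm 2My$, and the nearer one lies at distance $(M|t|+1)/\sqrt{1+4M^2}$ from $\alpha(t)$, which is comparable to $|\re\alpha(t)| = M|t| - 1$ with constants depending only on $M$. In case~1, the horizontal (same-height) distance from $\alpha(t) = 1 - 13\log_+|t| + it$ to $\partial H$ is exactly $\log_+|t|+1$; since the tangent vector to $\partial H$ at $(-14\log|t|, t)$ is $(-14/t, 1)$, which is nearly vertical for large $|t|$, the Euclidean distance differs from the horizontal distance by a factor $1 + O(1/|t|)$. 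Hence it is comparable to $|\re\alpha(t)| = 13\log_+|t| - 1$ as required.

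Once the hypotheses are in hand, Proposition~5.1 delivers the corollary. The only step with genuine geometric content is the asymptotic-verticality argument for the curved boundary in case~1; but since $\partial H$ is given explicitly as $x = -14\log|y|$, this reduces to a short tangent-direction computation (or, if desired, a direct Lagrange-multiplier calculation on the squared distance $(x_0 + 14\log y)^2 + (y - y_0)^2$), so there is no serious obstacle.
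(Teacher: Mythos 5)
Your proof is correct and follows the same route as the paper's: invoke Proposition 5.1 and verify its hypotheses directly for each configuration. You are in fact slightly more complete than the paper, which declares that ``we only need to check (a) and (b)'' and never spells out the two-sided distance estimate $c\,|\re\alpha(t)| \le \dist(\alpha(t),\partial H) \le C\,|\re\alpha(t)|$; your explicit verification (point-to-line formula in case 2, near-vertical tangent argument in case 1) fills in what the paper treats as self-evident.
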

 \begin{proof}
  By the previous proposition, we only need to check 
   (\ref{item:left}) and (\ref{item:derivative}). The first of these is
   self-evident. The second is also immediate; in the first case we have
    \[ \alpha'(t) = \begin{cases}
                       i - \frac{13}{t} & |t| > 1 \\
                       i   & -1<t<1. \end{cases}   \]
   and in the second  
    \[
      \alpha'(t) = i-M\cdot \sign(t).\qedhere
     \]
 \end{proof}

\section{Quasiconformal equivalence and conjugacy}
\label{sec:conjugacy}
  We begin with a simple lemma on obtaining a quasiconformal map on a 
   vertical strip that
   extrapolates between the identity on one boundary and a map that is
   not too far from the identity on the other.

  \begin{lem} \label{lem:qcstrip}
   Let $R>0$, and let $\tau:\R\to\C$ be a differentiable function with the property
    that there is $\eta<1/2$ with
    $|\tau'(t)|\leq \eta$ and $|\tau(t)| \leq \eta R$ for all $t\in\R$. 

   Let $S$ denote the strip $\{a+ib: 0<a<R\}$. Then there
    is a quasiconformal map $\theta$, defined on $S$, such that
    $\theta(it) = it$ and
      \[ \theta(R + it) = R + it+ \tau(t) \]
    for all $t\in\R$.

   Furthermore, the complex dilatation $\mu_{\theta}=(d\theta/d\bar{z})/(d\theta/dz)$ is bounded by
    $\eta/(1-\eta)$ almost everywhere. 
  \end{lem}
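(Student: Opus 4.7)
The natural approach is to define $\theta$ by linear interpolation in the horizontal direction: for $z = a + ib \in \overline{S}$, set
\[
  \theta(a+ib) := a + ib + \frac{a}{R}\,\tau(b).
\]
The boundary conditions are then built in: at $a=0$ we get $\theta(ib) = ib$, and at $a=R$ we get $\theta(R+ib) = R + ib + \tau(b)$. Since $|\tau(b)|\leq \eta R < R/2$, the map is clearly a homeomorphism of $\overline{S}$ onto its image (each horizontal line at height $b$ is sent affinely to a horizontal line).

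The remaining task is to compute the Wirtinger derivatives and check that the dilatation is bounded. Using $a = (z+\bar z)/2$ and $b = (z-\bar z)/(2i)$, so that $\partial b/\partial z = -i/2$ and $\partial b/\partial \bar z = i/2$, a direct calculation gives
\[
  \frac{\partial \theta}{\partial z} = 1 + \frac{\tau(b)}{2R} - \frac{i\,a\,\tau'(b)}{2R},
  \qquad
  \frac{\partial \theta}{\partial \bar z} = \frac{\tau(b)}{2R} + \frac{i\,a\,\tau'(b)}{2R}.
\]
Using $a \leq R$, $|\tau(b)| \leq \eta R$ and $|\tau'(b)| \leq \eta$, one obtains $|\partial_{\bar z}\theta| \leq \eta$ and $|\partial_z\theta| \geq 1 - \eta$, so
\[
  |\mu_\theta(z)| = \frac{|\partial_{\bar z}\theta|}{|\partial_z \theta|} \leq \frac{\eta}{1-\eta} < 1,
\]
as required. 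Since $\tau$ is only assumed differentiable (not $C^1$), the computation is done pointwise where $\tau'$ exists, and the resulting $\mu_\theta$ is measurable with $\|\mu_\theta\|_\infty \leq \eta/(1-\eta)$; combined with the fact that $\theta$ is a homeomorphism with locally integrable distributional derivatives (being Lipschitz in each variable), this gives that $\theta$ is quasiconformal on $S$ in the analytic sense.

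There is no real obstacle here: once the right interpolation formula is written down the verification is routine. The only mild subtlety is that the problem gives us only differentiability rather than $C^1$-regularity of $\tau$, so one should note that the definition above automatically produces an ACL homeomorphism whose distributional Beltrami coefficient coincides with the pointwise one almost everywhere, which suffices.
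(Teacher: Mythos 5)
Your interpolation formula and Wirtinger derivative computation are exactly the paper's, and the dilatation bound is correctly derived. However, there is a genuine gap in the injectivity step. You assert that $\theta$ is ``clearly a homeomorphism'' because ``each horizontal line at height $b$ is sent affinely to a horizontal line,'' but this is false: $\tau$ is complex-valued, so the image of the segment $\{a+ib : 0\leq a\leq R\}$ is the straight segment from $ib$ to $R+ib+\tau(b)$, which is \emph{tilted}, not horizontal, whenever $\im\tau(b)\neq 0$. Once you recognize that the image segments $L_b$ are tilted, their pairwise disjointness is no longer obvious --- indeed this is where both hypotheses $|\tau(b)|\leq\eta R$ and $|\tau'(b)|\leq\eta$ with $\eta<1/2$ are genuinely needed, and neither bound alone suffices. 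The paper proves disjointness by observing that $|\tau(b)|/R<1/2$ forces $L_b$ to have slope strictly between $-\pi/4$ and $\pi/4$, while $|\tau'|<1/2$ forces the tangent $i+\tau'(t)$ of the right-endpoint curve $t\mapsto R+it+\tau(t)$ to have argument strictly between $\pi/4$ and $3\pi/4$; consequently for $t>t_0$ the right endpoint lies strictly above the line extending $L_{t_0}$, and for $t<t_0$ strictly below, so no two segments can cross. A pointwise bound $|\mu_\theta|\leq\eta/(1-\eta)<1$ does not by itself promote a smooth map to a global homeomorphism, so this geometric argument cannot be dispensed with. You should replace your parenthetical justification with (some version of) this slope argument; the rest of your proof is fine.
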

 \begin{proof} 
   The map $\theta$ is defined simply by linear interpolation along
    horizontal line segments:
    \begin{equation}\label{eqn:theta}
     \theta(z) := z + \frac{\re z }{R}\cdot \tau(\im(z)). \end{equation} 
   This map satisfies the required boundary conditions. Clearly $\theta$ is injective
   when restricted to a fixed horizontal line segment $\{z\in S: \im z = t\}$, and this
   line segment is mapped to the straight segment $L_t$ that connects $it$ and
   $R + it + \tau(t)$. In order to prove that $\theta$
    is a homeomorphism onto its image, we must show that no two of these image
    line segments intersect. 

  This follows from the assumptions on $\tau$. Indeed, let $t_0\in\R$. 
    From $|\tau(t_0)|/R\leq \eta<1/2$, 
    we see that the line segment $L_{t_0}$ is sloped at an
    angle strictly between $-\pi/4$ and $\pi/4$. On the other hand, from 
     $|\tau'|\leq \eta < 1/2$
    we see that the argument of the derivative
      \[ \frac{d\theta(R+it)}{dt} = i + \tau'(t) \]
    lies strictly between $\pi/4$ and $3\pi/4$. This implies that, for $t>t_0$, the 
     point $\theta(R+it)$ lies above the line through $it$ and $R+it$, and for $t<t_0$
     it lies below this line. Hence the line segments $L_t$ and $L_{t_0}$ do not
     intersect for $t\neq t_0$.
 
   So it remains to estimate the complex dilatation of $\theta$. 
   If we set $h(z) := \frac{\re z}{R}\cdot \tau(\im(z))$, then we have
     \[ \frac{\partial \theta}{\partial z} = 1 + \frac{\partial h}{\partial z}
     \quad\text{and}\quad\frac{\partial \theta}{\partial \bar{z}} = \frac{\partial h}{\partial \bar{z}}. \]
    Furthermore, writing $z=x+iy$, we have $h(z)=\tau(y)\cdot x/R$, and hence 
     \[ \frac{\partial h(z)}{\partial x} = \frac{\tau(y)}{R}\quad
\text{and} \quad \frac{\partial h(z)}{\partial y} = \frac{x}{R}\cdot \tau'(y). \]
    Thus
   \[ \left|\frac{\partial h(z)}{dz}\right|,
\left|\frac{\partial h(z)}{d\bar{z}}\right| \leq
    \frac{1}{2}\left(\frac{|\tau(y)|}{R} + |\tau'(y)|\right) \leq \eta. \]
   
   Hence we have seen that 
      \[ \left|\frac{\partial\theta}{\partial z}\right|\geq
              1 - \eta \quad\text{and}\quad 
        \left|\frac{\partial \theta}{\partial \bar{z}}\right|\leq \eta. \]
    It follows that $\theta$ is quasiconformal and satisfies the
     stated bound on its dilatation. 
 \end{proof}

 As in \cite{boettcher}, it will be useful to work in logarithmic
  coordinates when proving our equivalence and conjugacy statements. 
   Hence we shall initially state our results for
  the following class of functions introduced in
  \cite{boettcher,strahlen}.

 \begin{defn}[(The class $\BlogP$)]\label{defn:Blog} 
   A holomorphic function 
    \[ F:\V\to H \]
   is said to belong to the class $\BlogP$ if 
   \begin{enumerate}[(A)] 
    \item $H$ is a $2\pi i$-periodic unbounded Jordan domain that contains
     a right half-plane.   \label{item:H}
    \item $\V\neq\emptyset$ 
     is $2\pi i$-periodic and $\re z$ is bounded from below in
     $\V$.
    \item $F$ is $2\pi i$-periodic. \label{item:periodicity}
    \item Each component $T$ of $\V$ is an 
      unbounded Jordan domain that is disjoint
      from all its $2\pi i\Z$-translates. 
      For each such $T$, the restriction
      $F:T\to H$ is a conformal 
      isomorphism with $F(\infty)=\infty$. 
      ($T$ is called a \emph{tract of $F$}; we 
       denote the inverse of $F|_T$ by 
       $F_T^{-1}$.) \label{item:tracts}
    \item The components of $\V$ accumulate only at $\infty$; i.e.,
      if $z_n\in\V$ is a sequence of points no two of which belong to the same
      component of $\V$, then $z_n\to\infty$.  \label{item:accumulatingatinfty}
   \end{enumerate}
 \end{defn}
 \begin{remark}[Remark 1]
  In \cite{boettcher}, the class of functions described in Definition \ref{defn:Blog}
   is simply called
   $\mathcal{B}_{\log}$, while in \cite{strahlen}, that notation is
   used for the larger set obtained by omitting the periodicity
   requirement (\ref{item:periodicity}). Subsequent papers such 
   as \cite{devaneyhairs} followed
   the latter convention, hence we use $\BlogP$ for the class above.
 \end{remark}
 \begin{remark}
  Let $f\in\B$ and let $R>0$ be sufficiently large to ensure that
   $\D_R(0)$ contains the set $S(f)\cup \{0\}\cup \{f(0)\}$. Set
   $W := \C\setminus\overline{\D_R(0)}$, 
   $H := \exp^{-1}(W) = \{z\in\C:\re z > \log R\}$ and 
   $\V := \exp^{-1}(f^{-1}(W))$. Then every component $V$ of $\V$ is
   a Jordan domain whose boundary passes through infinity, and
   $f\circ\exp:V\to W$ is a universal covering. Hence we can define a function
   $F:\V\to H$ that belongs to $\BlogP$ and satisfies 
   $\exp\circ F = f\circ \exp$. Such a function is called
   a \emph{logarithmic transform of $f$} (or ``$f$ in logarithmic coordinates''); this is the motivation for the definition of $\BlogP$. 
 \end{remark}

 In the introduction, we stated our results only for 
  functions with a single tract. However, the equivalence and conjugacy results
  in this section actually hold for functions in the class $\BlogP$ that
  are sufficiently close to each other, even if there are infinitely many tracts. 
  The key statement is about
  \emph{quasiconformal equivalence}:
 
 \begin{thm}[(QC equivalence in the class $\BlogP$)] \label{thm:Blogequivalence}
  Suppose that $G:\V_G\to \HH$ and 
   $F:\V_F\to H$ belong to the class $\BlogP$, and that there is
   a constant $M>0$ with 
    \[ \V_G\supset \V_F \supset \{ G^{-1}(z) : \re z > M \} \]
   and $|F(z)-G(z)|\leq M$ for all $z\in \V_F$.

  Then for every $R\geq 4(M+2\pi)$, there exists a quasiconformal map
   $\Phi:\C\to\C$, commuting with translation by $2\pi i$, such that
    \[ F(\Phi(z)) = G(z)\quad\text{whenever $\re G(z)\geq R$}. \]

  Moreover, 
    the complex dilatation $\mu_{\Psi}$ of $\Psi$ satisfies
      \[ |\mu_{\Psi}(z)|\leq 4\cdot \frac{M}{R} \]
   for almost all $z\in\C$.

  Furthermore, $\Phi(z)=z$ when $z\notin \V_G$ and when $z\in \V_G$ with
    $\re G(z)\leq R/2$, and 
    \[ |\Phi(z) - z| \leq 
          \sup\{|F(\zeta)-G(\zeta)|: |\zeta - z|\leq K \}\leq M \]
   otherwise, where $K=2\pi\cdot (1+\log 2)$ is a universal constant. 
 \end{thm}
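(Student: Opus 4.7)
The plan is to construct $\Phi$ separately on each tract of $G$, setting $\Phi$ equal to the identity outside $\V_G$ and on the outer annulus of each tract, interpolating quasiconformally in a transition strip, and setting $\Phi = F_{T_F}^{-1}\circ G$ deep inside. The tract-matching step is straightforward: by the hypothesis $\V_F \supset G^{-1}(\{\re w > M\})$, for each $G$-tract $T$ the connected set $\{z\in T : \re G(z) > M\}$ lies in a single $F$-tract $T_F$, and $T_F \subset T$ by connectedness of $T_F\subset\V_G$. I will set $\Phi(z)=z$ when $z\notin \V_G$ or $\re G(z)\leq R/2$, and $\Phi(z) = F_{T_F}^{-1}(G(z))$ when $\re G(z)\geq R$; the latter is holomorphic and satisfies $F\circ\Phi = G$ tautologically. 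Continuity at the tract boundary is automatic since $\re G(z)\to 0$ there, well within the region where $\Phi=\id$.

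The interpolation on $\{R/2\leq \re G(z)\leq R\}$ will be carried out in $G$-coordinates, $w := G(z)\in\HH$. Introduce the near-identity map $\eta:H\to \HH$ defined by $\eta(w):= G(F_{T_F}^{-1}(w))$; since $\eta(F(\zeta))=G(\zeta)$ for $\zeta\in T_F$, the hypothesis $|F-G|\leq M$ gives $|\eta(w)-w|\leq M$ on the half-plane $\{\re w > M\}$. In $w$-coordinates $\Phi$ must equal the identity on $\re w = R/2$ and equal $\eta$ on $\re w = R$, so setting $\tau(t):= \eta(R+it)-(R+it)$ one has $|\tau(t)|\leq M$. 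Cauchy's estimate applied to the holomorphic function $\eta-\id$ on the disk of radius $R/2$ about $R+it$ (which lies in $\{\re w > M\}\subset H$ by the assumption $R\geq 4(M+2\pi)$) yields $|\tau'(t)|\leq 2M/R$. These are precisely the hypotheses of Lemma \ref{lem:qcstrip} with strip-width $R/2$ and parameter $\eta_0 := 2M/R\leq 1/2$; the lemma produces a quasiconformal map on the strip with dilatation at most $\eta_0/(1-\eta_0)\leq 4M/R$. Pulling back to $T$ via $G_T^{-1}$ gives $\Phi$, and the periodicity in $2\pi i$ of $F$, $G$, $T$, and $T_F$ transfers to $\Phi$ by construction.

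The main technical obstacle is the sharp displacement estimate $|\Phi(z)-z|\leq \sup_{|\zeta-z|\leq K} |F(\zeta)-G(\zeta)|$ with the universal constant $K=2\pi(1+\log 2)$. In the deep region $\Phi(z) = F_{T_F}^{-1}(G(z))$ and since $F_{T_F}^{-1}:H\to T_F$ is a hyperbolic isometry, $\dist_{T_F}(\Phi(z),z) = \dist_H(G(z),F(z))$. This last quantity is controlled by integrating the hyperbolic density $\rho_H\leq \rho_{\HH}=1/\re w$ along the segment from $F(z)$ to $G(z)$, whose Euclidean length is bounded by $|F(z)-G(z)|$. Converting this small hyperbolic distance back to a Euclidean displacement uses the tract-width bound $\dist(z,\partial T_F)\leq \pi$, which follows from $T_F$ being disjoint from its $2\pi i\Z$-translates, together with Proposition \ref{prop:hyp_metric}(\ref{item:simplyconnected}). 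The refined statement---with the supremum over a $K$-disk rather than the value $|F(z)-G(z)|$---follows by parametrizing the curve from $z$ to $\Phi(z)$ as $t\mapsto F_{T_F}^{-1}((1-t)F(z)+tG(z))$ and observing that all of its points lie within Euclidean distance $K$ of $z$; the constant $K=2\pi(1+\log 2)$ emerges from a hyperbolic-radius bound of $1+\log 2$ along this curve multiplied by the Euclidean-vs-hyperbolic factor $2\pi$ coming from the tract width. The analogous estimate on the transition strip can be read off directly from the explicit interpolation formula \eqref{eqn:theta}.
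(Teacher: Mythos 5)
Your construction is structurally identical to the paper's: you match each $G$-tract $T$ to its unique inner $F$-tract $T_F$, set $\Phi=\id$ outside $\{\re G>R/2\}$, set $\Phi=F_{T_F}^{-1}\circ G$ where $\re G\geq R$, and interpolate on the intermediate strip via Lemma~\ref{lem:qcstrip} after Cauchy-estimating the derivative of $h(w)=G(F_{T_F}^{-1}(w))-w$. Two small omissions there: you assume $\{\re w>M\}\subset H$ without justification (the paper proves $\{\re w>2M\}\subset H$ by a degree/winding argument, which you would need before invoking the Cauchy inequality and before $\eta$ is even defined on $\{\re w\geq R/2\}$), and you do not mention that the pieces glue to a global quasiconformal map (Royden's glueing lemma / removability of analytic arcs).

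The real gap is in the displacement estimate, and it stems from a misreading of where the difficulty lies. In the deep region your hyperbolic argument gives $|\Phi(z)-z|\leq 2\pi\,\dist_{H}(F(z),G(z))\leq 2\pi\,|F(z)-G(z)|/(R-M)$, which already dominates $|F(z)-G(z)|$ by a factor well below $1$; no disk of radius $1+\log2$ is needed there, because the curve $t\mapsto F_{T_F}^{-1}((1-t)F(z)+tG(z))$ has hyperbolic diameter at most $M/(R-M)\leq 1/3$, far below $1+\log2$. The nontrivial part of the claim is the \emph{transition strip} $R/2\leq\re G(z)\leq R$, where $\Phi(z)=G_V^{-1}(\theta(G(z)))$ and the interpolation formula~\eqref{eqn:theta} gives a displacement controlled by $\tau(\im G(z))=h(R+i\,\im G(z))=G(z_1)-F(z_1)$ at the point $z_1:=F_{T_F}^{-1}(R+i\,\im G(z))$, which is \emph{not} $z$, nor near $z$ by any local computation. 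The content of the lemma is that $|z-z_1|\leq K$, and this is exactly where $K=2\pi(1+\log2)$ comes from: $\log 2=\dist_{\HH}(G(z),R+i\,\im G(z))$ when $\re G(z)$ is as small as $R/2$, and $1\geq\dist_{\HH}(R+i\,\im G(z),G(z_1))$ since $|F(z_1)-G(z_1)|\leq M$ and $\re\geq R-M\geq M$; pushing the total $1+\log2$ back into $V$ via the isometry $G$ and the tract-width bound $\rho_V\geq 1/(2\pi)$ gives $|z-z_1|\leq 2\pi(1+\log2)$. Your ``read off directly from the interpolation formula'' elides precisely this identification of $z_1$ and the estimate $|z-z_1|\leq K$, so the $K$-disk displacement bound is not actually established.

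A secondary, harmless deviation: where the paper passes from an estimate on $|\theta(G(z))-G(z)|$ to one on $|\Phi(z)-z|$ by invoking $|G'|\geq 1$ for $\re G\geq 4\pi$ (\cite[Lemma~1]{alexmisha}), you use the purely hyperbolic route $\rho_{\HH}\leq 1/\re w$ together with $\rho_V\geq 1/(2\pi)$. That substitution is fine and in fact yields a slightly sharper constant in the deep region; the point above is that it does not absolve you of locating $z_1$ in the strip.
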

 \begin{proof} 
   We begin with a simple observation regarding the structure of $H$ and $\V_F$.
 \begin{claim}
    The range $H$ of $F$ contains the half plane 
    $\{z\in\HH: \re z > 2M\}$. Furthermore,
     if $V$ is a component of $\V_G$, there is a unique component $\wt{V}$
    of $\V_F$ contained in $V$.
 \end{claim}
 \begin{subproof}
  Let $V$ be a component of $\V_G$. Then, by assumption, there is a component
   $\wt{V}$ of $\V_F$ that contains the connected set
   $G_V^{-1}(\{\re z > M\})$, and this component is contained in $V$. 
   Now $\re G$ is bounded on $V\setminus\wt{V}$, and hence $\re F$ is bounded
   on $(V\cap \V_F)\setminus \wt{V}$. Since $\re F$ is unbounded on every connected
   component of $\V_F$, we have seen that indeed $\wt{V}=V\cap \V_F$.

  Furthermore, let $z\in\HH$ with $\re z > 2M$ and
  consider the circle
   $C$ of radius $M$ around $z$. Then $G|_V^{-1}(C)$ is a simple closed curve
   in $\wt{V}$, and it follows from the assumption that its image under
   $F$ winds once around $z$. Since $H$ is simply-connected,
   it follows that $z\in H$ as claimed.
 \end{subproof}

\begin{claim}
   Let $V$ and $\wt{V}$ be as above. Then there is a quasiconformal map
    $\phi_V:\HH\to\HH$ with $\phi_V(z)=z$ when
    $\re z \leq R/2$ 
 and
    $\phi_V(z)=G(F|_{\wt{V}}^{-1}(z))$ when
    $\re z \geq R$. Furthermore, the complex dilatation of $\phi_V$ is bounded by
    $4M/R<1$. 
\end{claim}
\begin{subproof}
  We construct this map using Lemma \ref{lem:qcstrip}. Indeed,
   set $h(z) := G(F_{\wt{V}}^{-1}(z))-z$. 
   Then $|h(z)|\leq M$ for all $z\in \V_F$. Furthermore, if $z\in\HH$ with
   $\re z\geq R$, then the domain of $h$ contains 
   the disk of radius $R-2M\geq R/2$. Hence by the Cauchy inequaliy,
   we see that 
     \[ |h'(z)|\leq \frac{2M}{R}. \]

  Thus, if we set $\tau(t) := h(it+R)$, we have
   \begin{align*}
      |\tau'(t)|\leq \frac{2M}{R} < \frac{1}{2}\quad\text{and}\quad
      \frac{2|\tau(t)|}{R}\leq \frac{2M}{R} < \frac{1}{2} 
   \end{align*}
  for all $t$. Hence we can apply Lemma \ref{lem:qcstrip} to obtain
  a quasiconformal map $\theta$ on
  the strip between real parts $R/2$ and $R$, such that $\theta$ is the identity
   on the left boundary of the strip and agrees with
    $G(F_{\wt{V}}^{-1}(z))=z+\tau(z)$ on the right boundary. Hence 
    \[ \phi_V(z) := \begin{cases}
                      z & \text{if } \re z \leq R/2\\
                      \theta(z) & \text{if } R/2 < \re z < R \\
                      G(F_{\wt{V}}^{-1}(z)) & \text{if } \re z \geq R
                    \end{cases} \]
  is the desired quasiconformal homeomorphism. (Note that the map is
  quasiconformal near points with real parts equal to $R$ or $R/2$, since
  a straight line is quasiconformally removable.) The bound on the dilatation also
  follows from Lemma \ref{lem:qcstrip}. 
 \end{subproof}

 Now let $z\in \C$. If $z\notin \V_G$, then we define
  $\Phi(z) := z$. Otherwise, let $V$ be the component of $\V_G$ containing
   $z$ and define
   \[ \Phi(z) := G_V^{-1}(\phi_V(G(z))). \]
  Then $\Phi:\C\to\C$ is a homeomorphism. Furthermore, $\Phi$ is
  quasiconformal on $\V_G$ and agrees with the identity outside
  this set. Hence, the map is quasiconformal everywhere by Royden's
  glueing lemma (\cite[Lemma 2]{bersglueing}, \cite[Lemma 2]{polylikemaps}), 
  and satisfies the stated dilatation bound. If
  $\re G(z)\geq R$ and $V$ is the component of $\V_G$ containing $z$, then
    \[ F(\Phi(z)) = F(G_V^{-1}(\phi_V(G(z)))) = G(z) \]
   by construction.

  To prove the final statement, we first observe that 
     \[ |G'(z)|\geq 1 \]
   whenever $\re G(z)\geq 4\pi$ by
       \cite[Lemma 1]{alexmisha}. If $z\in \wt{V}$ with
    $\Phi(z)\neq z$, then by construction we have
    $\re G(z)\geq R/2\geq 4\pi$ and, likewise, $\re G(\Phi(z))\geq 4\pi$. 
    Thus
    \[ |z - \Phi(z)|\leq |G(z) - G(\Phi(z))| =
                   |G(z) - \phi_V(G(z))|, \]
    where $V$ is the component of $\V_G$ containing $z$. 

  If $\re G(z)\geq R$, let us set $\omega := G(z)$; otherwise we set
   $\omega := R + i\cdot \im(G(z))$. Then we have
     \[ |G(z) - \phi_V(G(z))|  \leq |h(\omega)| \]
   by the definition of $\phi_V$, provided we  use the formula
    (\ref{eqn:theta}) for $\theta$ from Lemma \ref{lem:qcstrip}.
  
  Writing $z_1 := F_{\wt{V}}^{-1}(\omega)$, we have seen that
    \[ |z-\Phi(z)|\leq |h(\omega)| = 
          |G(z_1) - F(z_1)|. \]
  It remains to estimate $|z-z_1|$. The hyperbolic distance
   in $\HH$ between $G(z)$ and $\omega$ is at most $\log 2$. 
   Furthermore, the Euclidean distance in $\HH$ between 
   $\omega=F(z_1)$ and $G(z_1)$ is bounded by $M$. Since
   $\re \omega \geq R\geq 2M$, the hyperbolic distance
   in $\HH$ between these two points is at most $1$. 

  So the hyperbolic distance between $G(z)$ and $G(z_1)$ is bounded by
   $1+\log 2$, and thus the hyperbolic distance in $V$ between
   $z$ and $z_1$ is also bounded by this constant. Using the standard estimate
   on the hyperbolic metric, and the fact that $V$ does not intersect its
   translates by multiples of $2\pi i$, we see that indeed
  \[
     |z-z_1| \leq 2\pi \cdot (1+\log 2).\qedhere
   \]
 \end{proof}

\begin{proof}[Proof of Theorem \ref{thm:equivalence}]
  We can let $F,G\in\BlogP$ be logarithmic transforms of $f$ and $g$,
   respectively. More precisely, we assume without loss of generality that
   $0\notin T$ and set
   $G := \Psi\circ \exp$, which (suitably restricted) is an element
   of $\BlogP$ with domain $\V_G$ and range $\HH$.\footnote{%
   This need not quite be true if the boundary of the domain of $\Psi$ is not
   a Jordan curve, since the boundary of the range of $\Psi$ has the interval
   from $-i$ to $i$ in common with $\HH$. This problem is easily dealt with by
   first conjugating $g$ by $z\mapsto e^{\eps}\cdot z$ for some small $\eps>0$.}
    If we choose $\mu>0$
   sufficiently large, then we can likewise define a map
   $F:\V_F \to \{z\in\HH:\re z > \mu\}$ that belongs to the class $\BlogP$ 
   and satisfies $\exp\circ F = f\circ\exp$ and $\V_F\subset \V_G$. Then
   $F$ and $G$ satisfy the hypotheses of the previous theorem.

 In fact, recall that
   $|f(z)-g(z)|\leq C/|z|$ for a suitable constant $C$ (provided $z$ is
   sufficiently large). 
 Since the exponential map is expanding on a right half plane, it follows
  that we can choose the logarithmic transform $F$ in such a way that
    \begin{equation} \label{eqn:estimateinlogcoordinates}
    |F(z)-G(z)| \leq C\cdot e^{-\re z} \end{equation}
   for all $z\in \V_F$.

 Now choose $R$ sufficiently large that we can apply Theorem \ref{thm:Blogequivalence}, and
   let $\Phi$ be the quasiconformal map obtained from the theorem.
   Then
   $\phi(e^z) := e^{\Phi(z)}$ defines a quasiconformal map that satisfies 
   $g(z)=f(\phi(z))$ whenever $|g(z)|\geq e^R$.
  To estimate the asymptotics of $\phi$ at $\infty$, observe that 
    \[ |\Phi(z) - z | \leq C\cdot e^{K}\cdot e^{-\re z}, \]
   where $K$ is the universal constant from the previous theorem. Let us
   set $w := e^z$. If $\re z$ is sufficiently large, we have 
    \begin{align*}|w- \phi(w)| &=
       |w|\cdot |1 - \frac{e^{\Phi(z)}}{e^z}| \leq  
       |w|\cdot (1 + e^{\re(\Phi(z) - z)}) \\ &\leq
       2|w|\cdot |\Phi(z)-z|\leq 2|w|\cdot C\cdot e^{K}\cdot e^{-\re z} = 
          2Ce^{K}. \qedhere
    \end{align*}
\end{proof}

\begin{thm}[(QC conjugacy in the class $\BlogP$)] \label{thm:Blogconjugacy}
  Suppose that $F$, $G$, $M$ and $R$ are as in Theorem
   \ref{thm:Blogequivalence}. Suppose furthermore that 
    \[ \V_G\subset  \{z\in\HH: \re z > R \}. \]

  Then there exists a quasiconformal homeomorphism $\Theta:\C\to\C$,
   commuting with translation by $2\pi i$, such that 
    \[ \Theta(G(z)) = F(\Theta(z)) \]
   whenever $z\in\V_G$ with $\re G(z) \geq R$. 

   The complex dilatation $\mu_{\Theta}$ 
    satisfies $|\mu_{\Theta}|\leq 4M/R$ almost everywhere, and
    $\mu_{\Theta}=0$ almost everywhere on the Julia set $J(G)=\{z\in \V_G: G^n(z)\in \V_G\text{ for all $n$}\}$.

  Furthermore, suppose that
   $|F(z)-G(z)|\to 0$ uniformly as
   $\re z\to\infty$. Then
    \[ \sup_{z\in J_Q(G)} |\Theta(z)-z| \to 0 \]
   as $Q\to\infty$, where
     \[ J_Q(G) = \{z\in J(G): \re G^j(z)\geq Q\text{ for all $j\geq 0$} \}. \]
\end{thm}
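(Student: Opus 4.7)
The strategy is to promote the quasiconformal equivalence $\Phi$ produced by Theorem \ref{thm:Blogequivalence} to an actual conjugacy via iterated $F$-pullback, and to read the special properties on $J(G)$ off the pullback structure. The extra hypothesis $\V_G\subset\{\re z>R\}$ has the following key consequence: for any $z\in\V_G$ with $G(z)\in\V_G$ we have $\re G(z)>R$, so $z$ lies outside the support of $\mu_\Phi$ (which, by the construction of $\Phi$ in Theorem \ref{thm:Blogequivalence}, is contained in the slab $\{w\in\V_G:R/2<\re G(w)<R\}$). Iterating, $\mu_\Phi(G^n(z))=0$ for every $z\in J(G)$ and every $n\geq 0$.

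I define inductively $\Theta_0:=\Phi$ and, on each component $V$ of $\V_G$,
$$\Theta_{n+1}(z):=F_{\widetilde V}^{-1}\!\bigl(\Theta_n(G(z))\bigr),\quad z\in V,$$
setting $\Theta_{n+1}:=\Phi$ off $\V_G$; here $\widetilde V$ is the unique component of $\V_F$ contained in $V$ (existence is established in the proof of Theorem \ref{thm:Blogequivalence}), and $F_{\widetilde V}^{-1}$ is well defined on the relevant image since $\V_F\subset\V_G$ and $|\Phi-\id|\leq M$. Continuity at $\partial\V_G$, and hence the $2\pi i$-periodic homeomorphism property of each $\Theta_n$, follow from the same glueing observation used for $\Phi$ in Theorem \ref{thm:Blogequivalence}. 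Because both $F_{\widetilde V}^{-1}$ and $G|_V$ are conformal, the chain rule yields
$$\mu_{\Theta_{n+1}}(z)=\mu_{\Theta_n}(G(z))\cdot\overline{G'(z)}/G'(z)\quad\text{on }V,$$
so the bound $|\mu_{\Theta_n}|\leq 4M/R$ is preserved; by induction $|\mu_{\Theta_n}(z)|=|\mu_\Phi(G^n(z))|$ whenever $z,G(z),\dots,G^{n-1}(z)\in\V_G$, which vanishes on $J(G)$ for every $n$ by the previous paragraph.

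For convergence I invoke the Eremenko-Lyubich expansion $|F'(w)|\geq(\re F(w)-4\pi)/(8\pi)$ from \cite{alexmisha}, which after possibly enlarging $R$ (harmless, as it only strengthens the hypothesis) yields a constant $\lambda>1$ with $|(F_{\widetilde V}^{-1})'|\leq\lambda^{-1}$ on the relevant region. Writing $\Phi=F_{\widetilde V}^{-1}\circ G$ on $V$ (from $F\circ\Phi=G$), the base estimate $|\Theta_1-\Phi|\leq\lambda^{-1}|\Phi\circ G-G|\leq M\lambda^{-1}$ combined with contraction gives $\sup_z|\Theta_{n+1}(z)-\Theta_n(z)|\leq M\lambda^{-n-1}$, so $\Theta_n\to\Theta$ uniformly on $\C$. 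The limit $\Theta$ is quasiconformal with dilatation bound $4M/R$ (uniform limits of $K$-quasiconformal maps are $K$-quasiconformal); weak-$*$ convergence of Beltrami coefficients transports the pointwise vanishing on $J(G)$ to $\mu_\Theta\equiv 0$ almost everywhere on $J(G)$; and passing to the limit in $F\circ\Theta_{n+1}=\Theta_n\circ G$ gives the conjugacy $F\circ\Theta=\Theta\circ G$ on the required set. If moreover $|F-G|\to 0$ as $\re z\to\infty$, then $\varepsilon_Q:=\sup\{|\Phi(w)-w|:\re w\geq Q\}\to 0$ as $Q\to\infty$ by Theorem \ref{thm:Blogequivalence}, and for $z\in J_Q(G)$ every iterate has real part at least $Q$, so the same telescoping sharpens to $|\Theta(z)-z|\leq \lambda\varepsilon_Q/(\lambda-1)\to 0$ uniformly on $J_Q(G)$. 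The principal technical obstacle is the continuity of the $\Theta_n$ across $\partial\V_G$; since this mirrors the situation already resolved for $\Phi$ in Theorem \ref{thm:Blogequivalence}, it is handled by Royden's glueing lemma and the tract-matching between $F$ and $G$ that is in place there.
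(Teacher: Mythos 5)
Your overall strategy (promoting $\Phi$ to a conjugacy via iterated pullback by $F^{-1}$) matches the paper's, but the definition of the iterates $\Theta_n$ contains a genuine error in the domain of the pullback. You apply $\Theta_{n+1}(z) := F_{\widetilde V}^{-1}(\Theta_n(G(z)))$ on \emph{all} of $V$, whereas the paper applies it only on $\{z\in V : \re G(z) > R\}$, keeping $\Theta_{n+1} = \Phi$ elsewhere in $V$. This matters for two reasons. First, if $\re G(z)$ is small (which happens as $z$ approaches $\partial V$), then $G(z)\notin\V_G\subset\{\re w > R\}$, so $\Theta_n(G(z)) = G(z)$; but $G(z)$ need not lie in the range $H$ of $F$ (the proof of Theorem~\ref{thm:Blogequivalence} only guarantees $H\supset\{\re w > 2M\}$), so $F_{\widetilde V}^{-1}$ is simply not defined at such points. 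Second, even where it is defined, your map is not continuous across $\partial\V_G$: from inside, $\Theta_{n+1}\to F_{\widetilde V}^{-1}\circ G$, while from outside $\Theta_{n+1} = \Phi = \id$, and these disagree. Royden's glueing lemma, which you invoke, handles quasiconformality across a quasicircle for a map that is \emph{already} a homeomorphism; it cannot repair a discontinuity. With the paper's restricted pullback region, the seam is instead at $\{\re G(z) = R\}$ inside $\V_G$, where $G(z)\notin\V_G$ (so $\Theta_n(G(z)) = G(z)$) and $F_{\widetilde V}^{-1}(G(z)) = \Phi(z)$ by the explicit construction of $\Phi$ in Theorem~\ref{thm:Blogequivalence}, which makes the two definitions match and the well-definedness issue disappear.

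Beyond this, your convergence argument also diverges from the paper's, and less robustly so. You rely on a uniform contraction factor $\lambda > 1$ for $F_{\widetilde V}^{-1}$, and acknowledge you may need to ``enlarge $R$''; but $R$ is part of the theorem's data, and enlarging it alters the hypothesis $\V_G\subset\{\re z > R\}$ rather than strengthening the conclusion, so this fix is not clean. The paper instead observes that the maps \emph{stabilize}: $\Phi_{j+1} = \Phi_j$ on $\C\setminus G^{-j}(\V_G)$, hence the sequence is eventually constant on the dense open set $\C\setminus J(G)$, and convergence of the full sequence follows from the compactness of $K$-quasiconformal normal families with no expansion estimate needed. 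Finally, your claim that weak-$*$ convergence of $\mu_{\Theta_n}$ to $\mu_\Theta$ follows from locally uniform convergence of the maps is unjustified --- this implication is not a standard fact; the paper's argument (each $\Phi_j$ is conformal on a neighborhood of $J(G)$, combined with the stabilization) sidesteps this, with full details deferred to \cite{boettcher}.
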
 
\begin{proof}
 This theorem essentially follows from the corresponding results in
  \cite{boettcher}. However, for completeness we shall sketch the proof,
   which is not difficult in our case.

  Let $\Phi$ be the map from Theorem \ref{thm:Blogequivalence}.
  We define a sequence of quasiconformal
    maps $\Phi_j$ by $\Phi_0 := \Phi$ and 
    \[ 
       \Phi_{j+1}(z) := \begin{cases}
      F_{\wt{V}}^{-1}(\Phi_j(G(z))) & \text{if }z\in\V_G\text{ and } \re G(z) > R \\
         \Phi(z)  & \text{otherwise}. \end{cases}\]
  If $z\in \V_G$ and $\re G(z)=R$, then $G(z)\notin \V_G$, and hence 
    \[ F_{\wt{V}}^{-1}(\Phi_j(G(z))) = F_{\wt{V}}^{-1}(G(z)) = 
          \Phi(z). \]
   Hence the maps match up on the boundary, and each $\Phi_j$ is a homeomorphism.
   It follows from Royden's 
   glueing lemma that the $\Phi_j$ are all quasiconformal, with the
   same bound on the dilatation as $\Phi$. Furthermore, because
   all $\Phi_j$ agree on $\C\setminus \V_G$, it follows by induction
   that $\Phi_j$ and $\Phi_{j+1}$ agree on
    $\C\setminus G^{-j}(\V_G)$. Hence the sequence of maps 
   stabilize on the open set $\C\setminus J(G)$. This set is dense in $\C$ because
   $G$ is expanding with respect to the hyperbolic metric of $\HH$ (compare \cite[Lemma 2.3]{boettcher}). Together with
   the compactness property of quasiconformal maps, this implies that
   $\Phi_j\to\Theta$ for a quasiconformal map $\Theta$. By construction, 
   $\Theta\circ G = F\circ\Theta$ whenever $z\in\V_G$ and $\re z \geq R$.  

  Each map $\Phi_{j}$ is conformal on a neighborhood of $J(G)$, implying the
   statement about the dilatation on $J(G)$.

  The last claim follows easily from the fact that $G$ is expanding on
   $J_Q(G)$ and the final statement in Theorem \ref{thm:Blogequivalence}.
\end{proof}

\begin{proof}[Proof of Theorem \ref{thm:conjugacy}]
  Assuming $\rho_0$ was chosen sufficiently large, 
   we apply Theorem \ref{thm:approx} to obtain a function $f$ approximating $g$.
   Recall that $|f(z)-g(z)|\leq \mu$ on $\T$, where $\mu>1$ is a universal constant
    (Remark \ref{rem:bounddependence}). 

 As in the proof of Theorem \ref{thm:equivalence} and of Proposition \ref{prop:logtract}, we can let 
    $G:\V_G\to \HH$ and $F:\V_F\to \{a+ib : a>\log(2\mu)\}$ be logarithmic transforms of $f$ and $g$, respectively. Furthermore,
    $F$ can be chosen such that 
    \[ |F(z) - G(z)| \leq M \]
   for a universal constant $M$, and we can assume that $M$ is chosen so large that
     $\V_F\supset \{G^{-1}(z) : \re z > M\}$. In other words, the hypotheses of Theorem \ref{thm:Blogequivalence} are satisfied. 

  If $\rho_0$ was chosen sufficiently large, then $R := \log \rho_0$ satisfies $R\geq 4(M+2\pi)$, and hence we can apply Theorem
    \ref{thm:Blogconjugacy} to obtain a conjugacy $\Theta$ between $G$ and $F$. 
   Defining $\theta(\exp(z)) := \exp(\Theta(z))$ yields the desired
   conjugacy between $g$ and $f$.
\end{proof}

\begin{rem}[(Additional properties of the conjugacy)]
 It follows from the proof that, in the setting of Theorem
  \ref{thm:conjugacy}, the following additional statements hold: 

 \begin{itemize}
  \item  The quasiconformal
    dilatation of $\theta$ tends to zero
   as $\inf_{z\in T}|z| \to \infty$.
  \item 
 $\displaystyle{\sup_{z\in J_Q(g)} d_{\log}(z,\theta(z)) \to 0}$
   as $Q\to\infty$, where
    \[ J_Q(g):= \{z\in J(g): |g^n(z)|\geq Q\text{ for all $n\geq 1$} \} \]
    and 
    $d_{\log}$ denotes the distance with respect to
   the metric $\frac{|dz|}{|z|}$.
 \end{itemize}
\end{rem}

\section{Functions with full hyperbolic dimension}
\label{sec:hypdim}
 We now turn to proving Theorem \ref{thm:hypdim}. That is,
  we construct an entire function $f\in\B$ that is
  hyperbolic and whose Julia set contains hyperbolic sets
  of Hausdorff dimension arbitrarily close to two.  
  To do so, we will construct a suitable model function 
  $\Psi:T\to H$, where $H$ is as in Theorem \ref{thm:approx}, and apply  
  Theorem \ref{thm:conjugacy}. 

 \subsection*{Idea of the construction}
 Before we give the details, let us broadly outline the idea. 
  For simplicity, let us consider models $\Psi:T\to \HH$ (where $\HH$ is the right
  half plane). It turns out that changing to the domain $H$ from Theorem
  \ref{thm:approx} does not add significant new issues. 

 We discuss the construction in logarithmic coordinates. 
   Suppose that $\Psi:T\to\HH$ is a model function, with $\overline{T}\subset\HH$,
   and let $V$ be a component
   of $\V := \exp^{-1}(T)$; we define
   $G := \Psi\circ\exp$. Then $G:\V\to\HH$ is $2\pi i$-periodic and $G|_V$ is a conformal
   isomorphism between $V$ and $\HH$. 

 The basic set-up of the construction is somewhat reminiscent of the
   proof \cite{baranskikarpinskazdunik} that the hyperbolic dimension of a function
   with a logarithmic tract over infinity is always strictly greater than one. Let
   $K>0$ be sufficiently large, and let $Q$ be the square
   $Q=\{a+ib: K < a < 3K; |b|<K\}$, centered at the point
   $2K$. We shall build a finite iterated function system (compare \cite{urbanskimauldin} or also \cite[Definition 2.10]{hypdim}) on
   the square $Q$, each of whose branches is of the form
   $z\mapsto G^{-1}(z+2\pi i k)$, for some $k\in\Z$ and some branch of
   $G^{-1}$. Then the union of all $2\pi i\Z$-translates of the limit set of
   this function system is invariant under $G$. Projecting by the exponential map,
   we hence obtain a hyperbolic set for the map $g := \exp\circ \Psi$;
   the question is how to construct the tract $T$ and the function system in 
   such a way that the Hausdorff dimension of this limit set is close to two.

  Suppose that we are given points $\omega_1,\dots,\omega_m\in V$ with
    $G(\omega_j) = 2K + 2\pi i k_j$, for some $k_j\in\Z$, such that
    the $\omega_j$ have real parts between $K$ and $3K$. Suppose furthermore that
    $|k_j-k_i|>K/\pi$ for $i\neq j$. 
   Then, for each $j$, there are approximately
     $K/\pi$ points $\omega_j^{\ell}$ in $\omega_j + 2\pi i \Z$ that are
     themselves contained in the square $Q$. For each such point, we can define
     a conformal map $\phi_j^{\ell}$ on $Q$ by $\phi_j^{\ell}(z) = G^{-1}(z+2\pi i k_j)$,
     where the branch of $G^{-1}$ is chosen such that $\phi_j^{\ell}(2K) = \omega_j^{\ell}$. 

   These maps form a conformal iterated function system on $Q$ 
      (assuming that each $\phi_j^{\ell}(Q)$ does not intersect the boundary of $Q$,
       which will not be difficult to ensure). Note that each $\phi_j^{\ell}$ is
      a contraction, and the contraction factor is on the order of
          \[ \frac{\rho_{\HH}(2K)}{\rho_V(\omega_j)} 
                  \approx \frac{\dist(\omega_j,\partial V)}{K}.\] 
    Hence the size of the 
     contraction factor depends on the distance of $\omega_j$ to the
     boundary of $V$. So we should try to construct the tract $V$ in 
     such a way that the curve 
  \[ \Gamma_P := \{z\in V: \re G(z) = P\}, \]
     where $P=2K$, 
     isn't always too close to the boundary of $V$. 

\begin{figure}
 \begin{center}
    \resizebox{.98\textwidth}{!}{\input{figure_approx_hypdim_2.tex}}
  \end{center}
 \caption{The domain $V = V\bigl((\eps_k)_{k\in\N}\bigr)$\label{fig:hypdimtract}}
\end{figure}

 We will show that it is possible to ensure that 
   $\Gamma_P$ stays a fixed distance away from $\partial V$ at
   regular intervals. 
  The construction  depends
  on a sequence $\Xi=(\eps_k)_{k\in\N}$ of numbers $\eps_k\in [0,1]$. The 
  domain $V$
  consists of a central strip of fixed height $2h<2\pi$, joined to a sequence of equally
  spaced, equally sized chambers (on both sides) of width $w$. 
  The connection between these
  chambers is opened by a fraction of
  $\eps_k$. That is, if $\eps_k=0$, then the chamber is completely closed
  off, whereas if $\eps=1$, the chamber is completely open; see Figure
  \ref{fig:hypdimtract}.

  When a chamber is completely open, i.e.\ $\eps_k=1$, then---provided $k$ is
    large enough---the curve $\Gamma_P$ will run close to the boundary of the 
    chamber. On the other hand, if $\eps_k=0$, then the chamber is completely
    closed off, and hence $\Gamma_P$ cannot enter it. By continuity, it is
    possible to ensure that $\Gamma_P$ runs through the central point of the
    chamber. This suggest that, for suitable choice of the sequence $\Xi$, all the
    chambers between real parts $K$ and $3K$ will contain a point
    $\omega_j$ with $\dist(\omega_j,\partial V)>\delta$, for some fixed $\delta$,
    and with $G(\omega_j) = 2K + 2\pi i k_j$, as above. The number of points
    $\omega_j^{\ell}$ then is roughly $2K^2/w\cdot \pi$; i.e. grows
    quadratically with $K$, while the contraction factor is 
    of order $1/K$. Thus the Hausdorff dimension of the corresponding limit set
    tends to $2$ as $K\to\infty$. 
   By making sure that the above properties hold for a sequence $K_j$ tending to
    infinity, the hyperbolic dimension of the resulting function is equal to two.

\subsection*{The hyperbolic metric in $H$}
 Before we provide the  details of the construction that was just outlined, let 
   us make some observations about the hyperbolic metric of 
   \[     H := \{x+iy: x>-14 \log_+|y|\}. \]
 \begin{lem}[(Hyperbolic geometry of $H$)] \label{lem:metricH}
  The segment $[0,\infty)$ is a hyperbolic geodesic in $H$. Furthermore, 
   there is a constant $C_1>1$ such that, 
   for every $z_0\in H$, 
   the hyperbolic geodesic of $H$ that contains $z_0$ and is perpendicular to
   (and symmetric with respect to) the real axis is contained in 
    $\{z\in H: |z_0|/C_1 < |z|<C_1 |z_0|\}$. 

  Furthermore, there is a constant $C_2$ with the following property. 
   If $x\geq 2$ and $z\in H$ with $1\leq \re z \leq x/2$, then
    \[ \dist_H(x,z) \geq \frac{\log\left(\frac{x}{\re z}\right)}{C_2}. \]
 \end{lem}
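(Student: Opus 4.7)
For Part 1, the domain $H$ is invariant under complex conjugation $\sigma(z) = \bar z$, an anti-holomorphic involution that is a hyperbolic isometry of $H$. Its fixed set in $H$ equals $H \cap \R = (0, \infty)$, since the defining condition of $H$ at $\im z = 0$ reduces to $\re z > 0$. As the nonempty fixed set of a nontrivial isometry of a simply-connected hyperbolic domain, $(0, \infty)$ is a hyperbolic geodesic.

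For Part 2, let $\gamma$ be the geodesic of $H$ through $z_0$ that is perpendicular to and symmetric about $\R$, and let $p > 0$ be its unique intersection with $(0, \infty)$. The claim reduces to showing $\gamma \subset \{p/C \leq |z| \leq Cp\}$ for a universal $C$, since then $|z_0|$ and $p$ are comparable and the stated bound follows with $C_1 = C^2$. I would work in logarithmic coordinates by setting $\widetilde{H} := \log H$ via the principal branch of $\log$ (well-defined as $H \cap (-\infty, 0] = \emptyset$): then $\widetilde{H}$ is a simply-connected subdomain of the strip $\{|\im w| < \pi\}$, is symmetric about $\R$, and by direct inspection of the condition defining $H$ contains the half-strip $\{\re w \geq 0,\ |\im w| < \pi/2\}$ (and asymptotically fills $\{|\im w| < \pi\}$ as $\re w \to \infty$). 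The geodesic $\gamma$ corresponds to a geodesic $\widetilde{\gamma}$ of $\widetilde{H}$ through $\log p$, perpendicular to and symmetric about $\R$. For each $c \in \widetilde{H} \cap \R$, I apply Lemma~\ref{lem:modulus} to $\widetilde{Q}_D(c) := \widetilde{H} \cap \{|\re w - c| \leq D\}$, whose modulus exceeds $1/2$ once $D$ is a suitable universal constant (by a standard extremal-length estimate using that $\widetilde{Q}_D(c) \subset \{|\re w - c| \leq D,\ |\im w| < \pi\}$). Lemma~\ref{lem:modulus} then yields a geodesic of $\widetilde{H}$ symmetric about $\R$ inside $\overline{\widetilde{Q}_D(c)}$, crossing $\R$ at some $T(c) \in [c - D, c + D]$. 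An intermediate-value argument on $c \mapsto T(c)$ over a neighborhood of $\log p$ produces $c^*$ with $T(c^*) = \log p$; the resulting geodesic must then equal $\widetilde{\gamma}$ by uniqueness, and is contained in $\overline{\widetilde{Q}_D(c^*)} \subset \{|\re w - \log p| \leq 3D\}$. Exponentiating yields the required bound on $\gamma$.

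For Part 3, I would estimate $\rho_H$ from below directly. The explicit boundary point $-14\log_+|\im \zeta| + i\im \zeta$ of $H$ gives $\dist(\zeta, \partial H) \leq \re \zeta + 14\log_+|\im \zeta|$, so by Proposition~\ref{prop:hyp_metric}(e),
\[
\rho_H(\zeta) \geq \frac{1}{2(\re \zeta + 14\log_+|\im \zeta|)}.
\]
For any path $\gamma$ from $x$ to $z$ in $H$, with $M := \sup_t |\im \gamma(t)|$, projecting the hyperbolic length onto the real axis yields
\[
\mathrm{length}_H(\gamma) \geq \int_{\re z}^{x} \frac{du}{2(u + 14\log_+ M)} = \frac{1}{2}\log \frac{x + 14\log_+ M}{\re z + 14\log_+ M}.
\]
When $14\log_+ M \leq \re z$, this is at least $(\log(x/\re z) - \log 2)/2$, which is comparable to $\log(x/\re z)/C_2$ since $x/\re z \geq 2$. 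If instead $M > e^{\re z/14}$, then the path must traverse heights $[0, M]$, and the same density estimate produces a vertical contribution of order at least $M/\log M$; a simple optimization over $M$ confirms that the total length still exceeds $\log(x/\re z)/C_2$ for a universal $C_2$.

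The main obstacle is the intermediate-value argument in Part 2, where one must ensure that the crossing-point map $c \mapsto T(c)$ of Lemma~\ref{lem:modulus} can be chosen continuously---since the lemma guarantees existence but not uniqueness of a geodesic in the quadrilateral---so that IVT genuinely captures $T(c^*) = \log p$. Parts 1 and 3 follow respectively from the symmetry of $H$ and the density estimate above, together with a routine horizontal/vertical optimization in Part 3.
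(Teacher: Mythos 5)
Part 1 of your proposal matches the paper's proof (reflective symmetry of $H$). Parts 2 and 3 take genuinely different routes, but each currently leaves a gap.

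For Part 2, you correctly identify the weak point yourself: Lemma~\ref{lem:modulus} guarantees existence of a vertical geodesic in the closed quadrilateral, but not a canonical one, so the crossing map $c\mapsto T(c)$ need not be continuous and the intermediate-value step is not justified as written. The paper avoids the issue entirely by a sandwiching argument in the original (rather than logarithmic) coordinates: apply Lemma~\ref{lem:modulus} to the quadrilateral between the circular arcs $\{z\in H:|z|=|z_0|/C_1\}$ and $\{z\in H:|z|=|z_0|\}$ (with $C_1>e^{\pi}$, which makes the modulus exceed $1/2$) to obtain a vertical geodesic $\gamma_1$ in its closure, and similarly $\gamma_2$ between $|z|=|z_0|$ and $|z|=C_1|z_0|$. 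Distinct vertical geodesics cannot cross (two crossings would be forced by conjugation symmetry), so the vertical geodesic through $z_0$ is trapped between $\gamma_1$ and $\gamma_2$, hence in the required annulus. No selection or continuity is needed. Your setup in $\widetilde H$ could likely be completed by the same trapping move, but as written it is not.

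For Part 3 there are two problems. First, the bound $(\log(x/\re z)-\log 2)/2$ in the case $14\log_+ M\le\re z$ vanishes at $\re z = x/2$, where the conclusion demands a positive lower bound of size $(\log 2)/C_2$. This is fixable (the ratio $(x+a)/(\re z + a)$ is decreasing in $a$, hence at least $(x+\re z)/(2\re z)\ge 3/2$), but as stated the estimate does not deliver the inequality. Second, and more seriously, the large-$M$ case is unsupported: your density bound $\rho_H(\zeta)\ge 1/\bigl(2(\re\zeta+14\log_+|\im\zeta|)\bigr)$ degrades as $\re\zeta$ grows, and nothing in your argument prevents the competing path from excursions far to the right where the metric is weak, so the asserted vertical contribution ``of order $M/\log M$'' is not established. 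The paper sidesteps the entire path-bookkeeping by appealing to Lemma~\ref{lem:distance}, which already gives
\[
  \dist_H(x,z)\;\ge\;\frac{1}{2}\log\Bigl(1+\frac{|x-z|}{\dist(z,\partial H)}\Bigr),
\]
and then one only needs the Euclidean facts $\dist(z,\partial H)\le \re z + 14\log_+|\im z|$ and $|x-z|\ge\max(x-\re z,|\im z|)$, together with a two-case split on whether $\log_+|\im z|$ is at most $\re z$ or not; I recommend replacing your direct integration by that route.
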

\begin{proof}
  The first claim is clear because the domain is symmetric with 
  respect to the real axis. 

 To prove the second claim, 
   let us use the term ``vertical geodesic'' to refer to geodesics that
   are perpendicular to the real axis. 
  Consider the quadrilateral in $H$ bounded by the arcs
  $\sigma_{|z_0|/C_1}$ and $\sigma_{|z_0|}$, where
  $\sigma_t = \{z\in H: |z|=t\}$ and $C_1>1$. By the comparison principle
   for extremal length, the modulus of this quadrilateral
  is greater than that of the slit annulus 
     \[ \{z\in\C: 1/C_1 < |z|/|z_0| < 1, z\notin (-\infty,0) \}. \]
   The latter modulus is equal to 
  $(\log C_1)/2\pi$. Hence, by Lemma \ref{lem:modulus},
  if $C_1 > e^{\pi}$, then this quadrilateral contains a vertical geodesic 
  of $H$. For the same reason, there is a vertical geodesic between
  $\sigma_{|z_0|}$ and $\sigma_{C_1 |z_0|}$. The vertical geodesic passing through
  $z$ must lie between these two geodesics, and hence lies between
  $\sigma_{|z_0|/C_1}$ and $\sigma_{C_1 |z_0|}$, as claimed. 

  To verify the final claim, let 
   $z = P + iy$, and assume without loss of generality that $y\geq 0$. 
   Applying Lemma \ref{lem:distance}, we see that
    \[ \dist_H(x,z) \geq \frac{1}{2}\log \left(1+\frac{|x-z|}{\dist(z,\partial H)}\right). \]
  We have $\dist(z,\partial H) \leq P + 14 \log_+y$.    If $\log_+y \leq P$, then
     \[ \frac{|x-z|}{\dist(z,\partial H)} \geq 
          \frac{x-P}{15P} \geq \frac{x}{30P} \geq \frac{1}{30}\sqrt{\frac{x}{P}}. \]
    On the other hand, if $\log_+y > P$, then 
     \begin{align*} 
      \frac{|x-z|}{\dist(z,\partial H)}  &\geq
         \frac{\max(x-P,y)}{15\log y} \geq
         \frac{\max(x-P,y)}{15\log(\max(x-P,y))} \\ &\geq 
           \frac{\sqrt{\max(x-P,y)}}{15} \geq 
           \frac{\sqrt{x-P}}{15} \geq \frac{\sqrt{x}}{30} \geq 
           \frac{1}{30} \sqrt{\frac{x}{P}}. \end{align*}
    (Here we used that $t/\log t \geq \sqrt{t}$ for $t>1$.)   So in either case we have
     \[ \dist_H(x,z) \geq \frac{1}{2} \log\left(1+\frac{1}{30}\sqrt{\frac{x}{P}}\right), \]
    which implies that we can set
  \[
    C_2 := \sup_{t\geq 2} \frac{2\log t}{\log(1+\sqrt{t}/30)} < \infty.\qedhere
   \]
\end{proof}

\subsection*{Description of $V$ and parameter selection} 
  In a slight modification of the construction described above, we will
   allow the parameter sequence $\Xi=(\eps_k)_{k\in\N}$ to take
   values $\eps_k\in [0,1]\cup \{1^*\}$. Here $\eps_k=1^*$ will mean that
  the chamber is not only completely open,
  but if furthermore also $\eps_{k+1}=1^*$, then the wall between the two chambers
  is removed. The reason for this is that we wish to show that our example
  can be chosen in such a way that the Julia set has positive measure,
  and this requires us to introduce long parts of the tract that have height $2\pi$.
  (Readers interested only in an example with hyperbolic
  dimension equal to two can ignore this possibility in the following.)

 We fix the 
  width $w:=2\pi$ of the chambers and also set 
  $h := \pi/3$. Then,
    using the convention that $|1^*|=1$, 
  $V$ is given by
   \begin{align*}
     V := V\bigl(\Xi) := &\left\{a+ib: a> \frac{1}{2}, |b|<h\right\} \\
               &\cup \bigcup_{k\in\N:\eps_k\neq 0}\left\{ a+ib: |a-kw|<\frac{w}{2},
                                            h<|b|<\pi \right\}\\
               &\cup \bigcup_{k\in\N}\left\{a+ib: |a-kw|<\frac{|\eps_k|\cdot w}{2},
                                       |b|=h\right\} \\
               &\cup \bigcup_{k\in\N: \eps_k=\eps_{k+1}=1^*} \left\{a+ib: a=\frac{(2k+1)w}{2}, |b|<\pi\right\}.
   \end{align*}
 Let $G:V\to H$ be a conformal isomorphism with
    $G(1)=1$ and $G'(1)>0$. Because the tract is symmetric with respect
    to the real axis, we have $G([1,\infty))=[1,\infty)$, and
    $G(\bar{z})=\overline{G(z)}$. The dynamical model function $\Psi$ we later approximate will be
    given by $\Psi(\rho_0\cdot \exp(z)) := G(z)$, where $\rho_0$ is the constant from 
    Theorem \ref{thm:conjugacy} and $\Xi$ is a suitably chosen sequence. 

 We now proceed to investigate the behavior of the function $G$, for a given
   sequence $\Xi=(\eps_k)_{k\in\N}$. 
 All definitions in the following depend on $\Xi$, but
 for simplicity of notation, we often suppress this dependence. 
 We also emphasize that 
  any constants appearing in the results will be
  \emph{independent} of $\Xi$, unless explicitly stated otherwise. 

 A key fact is that the tract $V(\Xi)$ 
  depends continuously on $\Xi$, using the
  product topology on $([0,1] \cup \{1^*\})^{\N}$ and the Carath\'eodory 
  kernel topology
  for the domains. In particular, the inverse $G^{-1}:H\to V$, and thus also the 
  function $G$ itself, depends
  continuously
  on $\Xi$ in the topology of locally uniform convergence. 

 We begin by estimating $|G(z)|$ independently of $\Xi$:
 \begin{lem} \label{lem:sizeofG}
  There are constants $C_3>1$ and $C_4>0$ such that
    \[ \frac{\re z}{C_3} - C_4 \leq \log |G(z)| \leq C_3\re z + C_4\]
  for all $z\in V$ with $\re z \geq w$.
 \end{lem}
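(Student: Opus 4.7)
The plan is to convert between $\re z$ and $\log|G(z)|$ via the conformal invariance of the hyperbolic metric, $\dist_V(1,z) = \dist_H(1, G(z))$. On the source side, the inclusion $V \subset \{|\im z|<\pi\}$ yields the lower bound $\dist_V(1, x) \geq (x-1)/2$ for real $x>1$, while $V \supset \{a > 1/2, |b|<h\}$ gives the upper bound $\dist_V(1, x) \leq \pi(x-1)/(2h)$. On the target side, $\HH \subset H$ gives $\dist_H(1, y) \leq \log y$, and the last inequality of Lemma \ref{lem:metricH} gives $\dist_H(1, y) \geq \log(y)/C_2$, both for real $y>1$.

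These ingredients immediately handle the case $z = x\in V \cap [w,\infty)$: the symmetry of $V$ and $H$ together with the normalization $G(1)=1$, $G'(1)>0$ forces $G([1,\infty)) = [1,\infty)$, and conformal invariance then delivers $\log G(x) \asymp x$ with constants depending only on $h$ and $C_2$, hence not on $\Xi$.

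To extend the bound to arbitrary $z\in V$ with $\re z \geq w$, I would argue that $\log|G(z)|$ differs from $\log G(\re z)$ by at most a bounded additive constant. For $z$ sitting comfortably inside the central strip (say, $|\im z|\leq h/2$), this is immediate: the vertical segment from $\re z$ to $z$ has bounded hyperbolic length in $V$, so $\dist_H(G(z),G(\re z)) = O(1)$, and the annular-sector description of perpendicular geodesics in $H$ (the middle assertion of Lemma \ref{lem:metricH}) converts bounded hyperbolic displacement into bounded ratio $|G(z)|/G(\re z)$.

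The main obstacle is ensuring uniformity in $\Xi$ when $z$ lies deep in a chamber whose opening parameter $\eps_k$ is very small: in that case the hyperbolic distance from $z$ to the central strip inside $V$ can be arbitrarily large, so the direct path argument fails. I would handle this case via Ahlfors distortion (Theorem \ref{thm:ahlforsdistortion}): since $\theta(\zeta) \leq 2\pi$ everywhere, the sub-quadrilateral $Q_k := V \cap \{kw - w < \re\zeta < kw + w\}$ has conformal modulus bounded uniformly in $\Xi$. The image $G(Q_k)$ is therefore a conformal rectangle in $H$ whose two ``vertical'' sides are the cross-cuts $G(V \cap \{\re = kw \pm w\})$, both of which pass through positive-real-axis points of magnitude $\asymp e^{C\cdot kw}$ by the real-axis estimate. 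A Phragm\'en--Lindel\"of-type argument applied to the harmonic function $\log|G|$ on $Q_k$, using the boundary behavior of $\log|\zeta|$ on $\partial H$, then delivers $\log|G(z)| \asymp kw \asymp \re z$ uniformly in $\Xi$, completing the proof.
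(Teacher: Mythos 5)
Your first step—bounding $\log G(x)\asymp x$ on $V\cap\R$ by squeezing $\dist_V(1,x)$ between strip estimates and comparing with $\dist_H$ in the range—is exactly the paper's opening move, and your step for $z$ in the central strip is sound (though rather than the annulus statement in Lemma \ref{lem:metricH} it is cleaner to apply Lemma \ref{lem:distance} twice to convert $\dist_H(G(\re z),G(z))=O(1)$ into a bounded ratio, since $\dist(\cdot,\partial H)\le|\cdot|$ on the positive real axis). However, the Phragm\'en--Lindel\"of step for $z$ deep in a nearly-closed chamber has a genuine gap. To run any such comparison on $Q_k$ you need boundary control of $\log|G|$ on $\partial Q_k$, and on the portion $\partial Q_k\cap\partial V$ the map $G$ lands on $\partial H$, where $|G|$ is a priori unrestricted; indeed as $\eps_k\to 0$ the image of the chamber degenerates toward a single prime end of $\partial H$, and showing that this prime end sits at $\log$-magnitude $\asymp kw$ uniformly in $\Xi$ is essentially the statement you are trying to prove. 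The ``boundary behavior of $\log|\zeta|$ on $\partial H$'' does not resolve this, because it only relates $\log|G|$ to $\re G$ on $\partial V$, and $\re G$ there is just as uncontrolled. So the deep-chamber case is circular as written.

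The paper avoids this entirely with a different device: the foliation of $V$ by geodesics perpendicular to $\R$. If $\gamma$ is the perpendicular geodesic through $z$ and $x=\gamma\cap\R$, then the symmetry statement of Lemma \ref{lem:metricH} gives directly $G(x)/C_1\le|G(z)|\le C_1 G(x)$, reducing everything to the real-axis estimate. The chamber geometry is then handled not by a PDE-type comparison but by the purely geometric Lemma \ref{lem:modulus}: since $w=2\pi$ and $V$ has height at most $2\pi$, each width-$w/2$ sub-quadrilateral of $V$ has modulus at least $1/2$ and hence contains a perpendicular geodesic; the ordering of these geodesics then pins $|x-\re z|\le w$, uniformly in $\Xi$. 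You would do well to replace your third step with this argument; your first two steps can stay.
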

\begin{proof}
 By the standard estimate on the hyperbolic metric (or, alternatively, by the Ahlfors
   distortion theorem), it follows that there is a constant $C_3>1$
  such that
   \[ x/C_3 \leq \log G(x) \leq C_3 x \]
  for all $x\in V\cap \R$ with $x\geq w/2>1$. 

 Now let $z\in V$ with $\re z \geq w$.  We consider the vertical geodesic $\gamma$ 
  of $V$ passing through $z$ (i.e., the unique geodesic through z that intersects the 
 real axis  perpendicularly.)
  Let $x$ be the point of intersection of $\gamma$ with the real axis. It follows
  from Lemma \ref{lem:metricH} that
  $ G(x) /C_1 \leq |G(z)|\leq C_1 \cdot G(x)$; hence
  \begin{equation}\label{eqn:Gzestimate}
    x/C_3 - \log C_1\leq \log|G(z)|\leq C_3x + \log C_1. \end{equation}
  
 Furthermore,  let $k\geq 2$ be maximal with
  $kw/2 \leq \re z$. Then 
  the quadrilateral in $V$ bounded by the vertical cross-cuts 
   $\{\zeta\in V: \re \zeta = (k-1)w/2\}$ and $\{\zeta\in V: \re \zeta = kw/2\}$ 
   has modulus at least $1/2$ (by choice of $w$ and the comparison principle for
   extremal length). Hence, by Lemma
  \ref{lem:modulus} this quadrilateral contains a vertical geodesic of $V$, and
   in particular we must have $x \geq (k-1)w/2$. Similarly we have 
     $x\leq (k+2)w/2$, and hence
  \[ |x-\re z| \leq w. \]
  Combining this with (\ref{eqn:Gzestimate}), the proof is complete.
\end{proof}

 For $P>0$, define as above 
  \[ \Gamma_P := \{z\in V: \re G(z) = P\}. \]
 We shall define \emph{signed hyperbolic distance} from $\Gamma_P$ in 
  $V$ by setting 
   \[ \delta(z,P) := \begin{cases}
                        -\dist_V(z,\Gamma_P) &\text{if } \re G(z) > P; \\
                  0  & \text{if } \re G(z) = P \\
                  \dist_V(z,\Gamma_P) &\text{if } \re G(z) < P; \\
               \infty & \text{if }z\notin V. \end{cases} \]
 We observe that $\delta(z,P)$ depends continuously on $\Xi$ for fixed $z$ and $P$. Hence,
  setting $\zeta_k := kw + i\cdot 2\pi/3$, we see that
   \[ \delta_{P,k}(\Xi) := \delta(\zeta_k,P) \]
  is a continuous function of $\Xi$.

 \begin{lem} \label{lem:k(P)}
  \begin{enumerate}
   \item For all $k\in\N$ and all
     $\Xi = (\eps_k)_{k\in\N}$:  if $\eps_k=0$, then $\delta_{P,k}(\Xi) = \infty$.
   \item For all $P>0$, 
      there is $k_0=k_0(P)\in\N$ (independent of the sequence $\Xi=(\eps_k)_{k\in\N}$)
     such that, for all $k\geq k_0$: if
           $\eps_k=1$ (or $\eps_k=1^*$), then $\delta_{P,k} \leq -1$.

     Furthermore, there is a constant $\kappa_2$ such that
     $k_0(P)\leq \kappa_2\cdot \log P$ for sufficiently large $P$. 
   \end{enumerate}
 \end{lem}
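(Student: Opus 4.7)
The strategy is to reduce the claim to the growth estimate for $G$ on the real axis (Lemma~\ref{lem:sizeofG}) via the uniform local geometry of $V(\Xi)$ near each $\zeta_k$, and then to invoke the hyperbolic-distance estimate in $H$ supplied by the third part of Lemma~\ref{lem:metricH}. Part~(1) is essentially a definition check: if $\eps_k = 0$ then the $k$th chamber and its connecting slit are absent from $V(\Xi)$, so $\zeta_k = kw + 2\pi i/3$ (whose imaginary part $2\pi/3$ exceeds $h = \pi/3$) lies outside $V$, and the fourth clause of the definition of $\delta$ gives $\delta_{P,k}(\Xi) = \infty$.

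For part~(2), write $x_k := kw$. The first step I would carry out is to prove a universal bound $\dist_V(x_k, \zeta_k) \leq C_0$ whenever $\eps_k \in \{1, 1^*\}$, independent of $\Xi$ and of $k$. When $\eps_k \in \{1, 1^*\}$, the vertical segment from $x_k$ to $\zeta_k$ is contained in $V$, and a short case check (according to the values of $\eps_{k \pm 1}$) shows that every point of this segment has Euclidean distance at least $\pi/3$ from $\partial V$: the fully open connection of width $w = 2\pi$ removes the central-strip boundary near $a = kw$, while the chamber of height $2\pi$ keeps the top wall at distance $\pi/3$ even at $\zeta_k$. Integrating the bound $\rho_V \leq 2/\dist(\cdot,\partial V)$ from Proposition~\ref{prop:hyp_metric}\,(\ref{item:simplyconnected}) along a segment of Euclidean length $2\pi/3$ then yields $C_0 = 4$.

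The second step is a direct computation in $H$. Since $x_k \geq w$, Lemma~\ref{lem:sizeofG} gives $G(x_k) \geq \exp(kw/C_3 - C_4)$; and since $G(x_k) \in \R_{>0}$, the third estimate of Lemma~\ref{lem:metricH} applied with $x := G(x_k)$ yields, for every $z \in H$ with $\re z = P$ (once $k$ is large enough that $G(x_k) \geq 2P$ and $P \geq 1$),
\[
  \dist_H\bigl(G(x_k),\, \{\re \zeta = P\}\bigr) \geq \frac{1}{C_2}\left(\frac{kw}{C_3} - C_4 - \log P\right).
\]
Take $k_0(P)$ to be the smallest $k$ for which the right-hand side exceeds $C_0 + 1$. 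For $k \geq k_0(P)$ with $\eps_k \in \{1, 1^*\}$, the triangle inequality and the first step force $\dist_H(G(\zeta_k), \{\re \zeta = P\}) \geq 1$, and also rule out $G(\zeta_k)$ lying on the opposite side of $\{\re \zeta = P\}$ from $G(x_k)$, since any joining curve of length $\leq C_0$ cannot cross this separating line. Because $G : V \to H$ is a hyperbolic isometry with $\Gamma_P = G^{-1}(\{\re \zeta = P\})$, this translates to $\delta_{P,k}(\Xi) \leq -1$. The threshold $k_0(P)$ is affine in $\log P$ with slope $C_3/w$, so one may take $\kappa_2 := 2C_3/w$ for $P$ sufficiently large.

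The main point requiring care is the uniform distance bound of the first step, since $V(\Xi)$ depends on all the parameters $\eps_j$; however, the geometry along the short vertical segment from $x_k$ to $\zeta_k$ is controlled entirely by $\eps_k$ together with the fixed values of $w$ and $h$, and a finite case analysis disposes of it. Everything else follows directly from the previously established lemmas.
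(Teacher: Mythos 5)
Your proposal is correct and follows essentially the same route as the paper: bound $\dist_V(\zeta_k, kw)$ by a universal constant ($4$, which the paper records without detail), then combine the growth estimate $\log G(kw) \gtrsim kw/C_3$ with the third estimate of Lemma~\ref{lem:metricH} to push $\dist_V(kw, \Gamma_P)$ above $C_0+1$ once $k \gtrsim \log P$, and finish by triangle inequality together with a sign check. The only difference is cosmetic — you argue forward (choose $k_0$ so the distance exceeds $C_0+1$) while the paper argues the contrapositive (if $\delta_{P,k}>-1$, bound $k$) — and your ``case check on $\eps_{k\pm1}$'' is actually unnecessary since the $\pi/3$-disk around the vertical segment stays inside $|a-kw|<w/2=\pi$ regardless of the neighboring chambers.
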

 \begin{proof}
  The first part is trivial by definition.

 To prove the second part, we can assume without loss of generality that $P\geq 1$.
  Suppose that 
   $\eps_k=1$ or
   $\eps_k=1^*$ with $k\geq C_3\cdot \log(2P)/w$, where $C_3$ is as in
   the proof of Lemma \ref{lem:sizeofG}. Set $x := \re \zeta_k = k\cdot w$.

 By the standard estimate on the
   hyperbolic metric, the hyperbolic distance between $\zeta_k$ and
   $x := \re \zeta_k=k\cdot w$ is bounded by some uniform constant $C$. (In fact, given our
   choice of $w$ and $h$, we can take $C=4$.) Hence, if $\delta_{P,k}(\Xi)>-1$, then
   the hyperbolic distance between $x$ and $\Gamma_P$ is bounded by
    $C+1$. This is only possible if $k$ is sufficiently small.

 Indeed, we have $G(x) \geq \exp(x/C_3) \geq 2P$ by choice of $k$. Applying
  Lemma \ref{lem:metricH}, we see that 
    \[ C+1 \geq \dist_V(x,\Gamma_P) \geq 
        \frac{\log\left (\frac{G(x)}{P}\right)}{C_2} \geq
        \frac{kw}{C_2\cdot C_3} - \frac{\log P}{C_2}.\]
   The claim follows by rearranging.
 \end{proof}
\begin{remark}
 We could have replaced the sequence $(\xi_k)$ by any sequence
  whose hyperbolic distance from $\xi_k$ is bounded (or does not grow
  too quickly). This would yield a stronger version of Theorem \ref{thm:selection} below,
  but we will not require this extra generality. 
\end{remark}

 We can now prove our parameter selection result. 

 \begin{thm} \label{thm:selection}
   Let $A\subset\N$, and suppose we are given values $(\wt{\eps}_k)_{k\in\N\setminus A}$ with $\wt{\eps}_k\in [0,1]\cup\{1^*\}$, and a sequence 
   $(P_k)_{k\in A}$ with $k\geq k_0(P_k)$ for all $k$. 

   Then there is a sequence $\Xi = (\eps_k)_{k\in\N}$ such that
    \begin{itemize}
     \item $\eps_k = \wt{\eps}_k$ for $k\notin A$ and
     \item $\delta_{k,P_k}(\Xi)=0$ (i.e., 
       $\zeta_k\in \Gamma_{P_k}$) for all $k\in A$. 
    \end{itemize}
 \end{thm}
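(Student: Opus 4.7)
The plan is to reduce to the Poincar\'e--Miranda theorem in the finite case and then pass to arbitrary (possibly infinite) $A$ by a compactness argument. Throughout, the key observation is that $\delta_{k,P_k}(\Xi)$ is continuous in $\Xi$ (product topology on $([0,1]\cup\{1^*\})^{\N}$): the tract $V(\Xi)$ depends continuously on $\Xi$ in the Carath\'eodory kernel sense, so the Riemann map $G = G(\Xi)$ converges locally uniformly, and both $G(\zeta_k)$ and the curve $\Gamma_{P_k}$ vary continuously. By Lemma~\ref{lem:k(P)}(1), $\delta_{k,P_k}(\Xi) = +\infty$ when $\eps_k = 0$, and by Lemma~\ref{lem:k(P)}(2) together with the hypothesis $k\geq k_0(P_k)$, we have $\delta_{k,P_k}(\Xi) \leq -1$ when $\eps_k = 1$. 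The crucial additional input is that $\delta_{k,P_k}(\Xi)\to +\infty$ as $\eps_k\to 0^+$ with the other coordinates fixed: the hyperbolic distance from $\zeta_k$ to $\Gamma_{P_k}$ diverges as the neck of the $k$-th chamber collapses, and the sign is positive because the limiting image of $\zeta_k$ is the image under the limiting Riemann map of the pinch point $kw+ih$, a boundary point of $H$ whose real part is very negative by Lemma~\ref{lem:sizeofG} and the shape of $\partial H$.

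For finite $A$, I would parametrize the free coordinates by $\Xi_A = (\eps_k)_{k\in A}\in[0,1]^A$ (fixing $\eps_k = \wt\eps_k$ for $k\notin A$) and introduce the truncated map $\tilde F: [0,1]^A \to \R^A$ defined by
\[
  \tilde F_k(\Xi_A) := \min\bigl(1,\ \delta_{k,P_k}(\Xi)\bigr),
\]
with $\tilde F_k \equiv 1$ on the face $\{\eps_k = 0\}$. Continuity of $\tilde F$ at this face is exactly the divergence of $\delta_{k,P_k}$ as $\eps_k\to 0^+$. By construction, $\tilde F_k \geq 1 > 0$ on $\{\eps_k = 0\}$ and $\tilde F_k \leq -1 < 0$ on $\{\eps_k = 1\}$, so the Poincar\'e--Miranda theorem produces a zero $\Xi_A$ of $\tilde F$; since $0$ lies strictly below the truncation level, this is a genuine zero of $(\delta_{k,P_k})_{k\in A}$.

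For arbitrary $A$, write $A = \bigcup_n A_n$ as an increasing union of finite subsets, and apply the finite case to obtain $\Xi^{(n)}$ with $\delta_{k,P_k}(\Xi^{(n)}) = 0$ for every $k\in A_n$, $\eps_k^{(n)} = \wt\eps_k$ for $k\notin A$, and an arbitrary default value for $k\in A\setminus A_n$. Since $([0,1]\cup\{1^*\})^{\N}$ is compact by Tychonoff, extract a convergent subsequence $\Xi^{(n_j)}\to \Xi^*$. For each fixed $k\in A$, eventually $k\in A_{n_j}$, and continuity of $\delta_{k,P_k}$ in $\Xi$ gives $\delta_{k,P_k}(\Xi^*) = 0$, provided $\eps_k^* > 0$. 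This last condition holds because $\eps_k^{(n_j)}\to 0$ would force $\delta_{k,P_k}(\Xi^{(n_j)})\to +\infty$, contradicting $\delta_{k,P_k}(\Xi^{(n_j)}) = 0$. Since also $\eps_k^* = \wt\eps_k$ for $k\notin A$, the sequence $\Xi^*$ is the desired one.

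The main technical obstacle is the boundary-behaviour claim for $\delta_{k,P_k}$ as $\eps_k\to 0^+$: both the continuity in $\Xi$ and, more delicately, that the divergence is to $+\infty$ rather than $-\infty$. Continuity is routine from Carath\'eodory kernel convergence of $V(\Xi)$ and locally uniform convergence of $G(\Xi)$. The sign requires identifying the limit: as the neck closes, the image of the $k$-th chamber contracts onto $G_0(kw+ih)\in\partial H$, where $G_0$ is the limiting Riemann map. Lemma~\ref{lem:sizeofG} forces $|G_0(kw+ih)|\asymp e^{kw}$, and the defining inequality $\re z > -14\log_+|\im z|$ of $\partial H$ places $\re G_0(kw+ih)$ far to the left of $P_k$, guaranteeing $\re G(\zeta_k) < P_k$ for small $\eps_k$, hence $\delta_{k,P_k}(\Xi) > 0$. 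Once this is established, the Poincar\'e--Miranda plus Tychonoff skeleton is formal.
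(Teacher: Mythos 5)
Your proof is correct and is essentially the same as the paper's. The paper proves the finite case by defining a continuous self-map of the $m$-cube whose coordinates are (two-sided) truncations of the $\delta_{k,P_k}$; it then observes that the map sends each coordinate face to the corresponding face, so it has boundary degree one and is therefore surjective, yielding the desired interior zero. This degree argument is equivalent to the Poincar\'e--Miranda theorem you invoke, with your one-sided truncation $\min(1,\delta_{k,P_k})$ playing the same role as the paper's two-sided truncation. The infinite case is handled identically in both proofs: exhaust $A$ by finite subsets, apply the finite case, and pass to a subsequential limit using Tychonoff and the continuity of $\Xi\mapsto\delta_{k,P_k}(\Xi)$. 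The one place where you are noticeably more explicit than the paper is in verifying the \emph{sign} of the divergence $\delta_{k,P_k}\to+\infty$ as $\eps_k\to 0^+$ (needed for the truncated map to be continuous on the face $\{\eps_k=0\}$, and in the limit step to rule out $\eps_k^*=0$); the paper simply asserts continuity of $\delta_{k,P_k}$ and relies on the definitional convention $\delta(z,P)=\infty$ for $z\notin V$. Your argument --- that $G(\zeta_k)$ must tend to $\partial H$ as the neck pinches off, while Lemma~\ref{lem:sizeofG} caps $|G(\zeta_k)|$, so that $\re G(\zeta_k)$ eventually drops below $P_k>0$ --- correctly fills in this small gap.
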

 \begin{proof}
  Let us first prove the result for \emph{finite} subsets
   $A\subset\N$. If $m:=\#A=1$, then
   the claim simply corresponds to the intermediate value theorem.
   
   For $m>1$, the claim similarly follows by basic topology. 
    More precisely, let us define a map from the $m$-cube 
    $[0,1]^A$ to itself. For $x\in [0,1]^A$, let $\Xi(x)$ be the sequence
    defined by
    setting $\eps_k=x_k$ for $k\in A$ and $\eps_k=\wt{\eps}_k$ for
    $k\notin A$.

  For $k\in A$ and $x\in [0,1]^A$, we define
    \[ \wt{\delta_k}(x) := \begin{cases}
                      0 & \delta_{k,P_k}(\Xi(x)) > 1, \\
                      1  & \delta_{k,P_k}(\Xi(x)) < -1, \\
                   \frac{1-\delta_{k,P_k}(\Xi(x))}{2} & \text{otherwise}.\end{cases} \]
   Then $\phi: (\eps_k)_{k\in A}\mapsto (\wt{\delta_k})_{k\in A}$ is
   a continuous map of the $m$-cube to itself, and by Lemma
   \ref{lem:k(P)} it maps any face (of any dimension)
   to itself. Hence the map between the homotopy (or homology) groups 
   of the boundary of the cube induced by $\phi$ is the identity. This implies that
   $\phi$ must be surjective; and thus there exists $x\in [0,1]^A$ such that
    $\wt{\delta_k}(x)=1/2$ for all $k\in A$.

 This proves the theorem for finite $A$. If $A$ is infinite, we take 
  an increasing sequence of finite subsets $A_k$ that exhaust $A$, and
  let $\Xi$ be a limit of the corresponding sequences. The claim follows by continuity
  of the functions $\Xi\mapsto \delta_{k,P_k}(\Xi)$.
 \end{proof} 

\subsection*{Proof of Theorem {\ref{thm:hypdim}}}
 We are now in a position to complete the construction, in line with the sketch we gave
   at the beginning of the section. Recall that we will need to move the tract sufficiently
   far to the
   right in logarihmic coordinates in order to apply Theorem \ref{thm:conjugacy};
   i.e., we are really interested in the dynamical behavior of the function
   $G(z-R_0)$, where $R_0:=\log \rho_0>0$.
 \begin{cor}
  There is a constant $K_0>R_0$ 
    with the following property. Suppose that
    $K_0\leq K_1<K_2<\dots$ is a sequence of integers with
    $K_{i+1}>3K_i$ for all $i\geq 1$. Then there is a sequence
    $\Xi$ so that the corresponding function
    $G:V(\Xi)\to H$ satisfies 
  \[  \re G(\zeta_k) = 2K_i \]
    for all $k$ with
    $K_i < w\cdot k + R_0 < 3K_i$ and such that $\eps_k=1^*$ for all other $k$. 
 \end{cor}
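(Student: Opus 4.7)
The plan is to obtain the desired sequence $\Xi$ as a direct application of Theorem~\ref{thm:selection}, with the set $A$, the preselected values $(\wt\eps_k)_{k\notin A}$, and the prescribed real parts $(P_k)_{k\in A}$ dictated by the data of the corollary.

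First I would observe that, by the assumption $K_{i+1}>3K_i$, the intervals $(K_i,3K_i)$ are pairwise disjoint. Define
\[ A := \{k\in\N : K_i < wk + R_0 < 3K_i \text{ for some } i\geq 1\}, \]
so each $k\in A$ determines a unique index $i=i(k)$. I set $P_k := 2K_{i(k)}$ for $k\in A$, and $\wt\eps_k := 1^*$ for $k\notin A$. With this data, a sequence $\Xi$ produced by Theorem~\ref{thm:selection} will automatically satisfy $\eps_k=1^*$ away from $A$ and $\delta_{k,P_k}(\Xi)=0$, i.e.\ $\zeta_k\in\Gamma_{P_k}$, which is exactly the condition $\re G(\zeta_k)=2K_{i(k)}$ demanded by the corollary.

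The only nontrivial point is verifying the hypothesis $k\geq k_0(P_k)$ of Theorem~\ref{thm:selection}. By Lemma~\ref{lem:k(P)}(2), there is a constant $\kappa_2$ with $k_0(P)\leq \kappa_2\log P$ for all sufficiently large $P$. For $k\in A$ corresponding to $K_i$, the defining inequality $wk+R_0>K_i$ gives $k > (K_i - R_0)/w$, so it suffices to arrange
\[ \frac{K_i-R_0}{w} \geq \kappa_2\log(2K_i). \]
Since the left side grows linearly in $K_i$ while the right side grows only logarithmically, there exists a threshold $K_0>R_0$ (independent of the particular sequence $(K_i)$) such that the above inequality holds whenever $K_i\geq K_0$. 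Taking $K_0$ also large enough that $2K_i/w>1$ ensures that the integer set $\{k : K_i < wk+R_0 < 3K_i\}$ is nonempty for each $i$.

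With these choices the hypotheses of Theorem~\ref{thm:selection} are met, and the theorem delivers the required sequence $\Xi$. There is no real obstacle here beyond checking this quantitative bound: the entire argument is essentially bookkeeping that repackages the parameter-selection theorem in the form needed for the dynamical construction that follows, with the logarithmic growth of $k_0(P)$ from Lemma~\ref{lem:k(P)} as the only quantitative ingredient.
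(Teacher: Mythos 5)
Your proposal is correct and takes essentially the same route as the paper: the paper's proof likewise invokes Theorem~\ref{thm:selection} directly and observes that the only hypothesis to verify is $(K_i-R_0)/w > k_0(2K_i)$, which follows from the logarithmic bound $k_0(P)\leq\kappa_2\log P$ in Lemma~\ref{lem:k(P)} once $K_0$ is taken sufficiently large. You have merely spelled out the identification of $A$, $(P_k)$ and $(\wt\eps_k)$ that the paper leaves implicit.
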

 \begin{proof}
   This follows from the previous theorem. It only needs to be checked
    that $(K_i- R_0)/w > k_0(2K_i)$, provided $K_0$ was chosen
    sufficiently large, and this follows from the statement on the 
    size of $k_0$ in Lemma \ref{lem:k(P)}. 
 \end{proof}

 For the
  remainder of the section, let us fix
  $1\leq K_0 < K_1 < K_2 < \dots$ and let
  $V=V(\Xi)$ be as in the preceding corollary. We define
     \[ G_0(z) := G(z - R_0), \quad 
   \Psi(\exp(z)) := G_{0}(z)\quad\text{and}\quad g(z) := \exp(\Psi(z)). \]
   So $G_0:V'\to H$ is a conformal isomorphism, where $V' = V + R_0$, and
   $\Psi:T\to H$ is a model function, where $T := \exp(V')$.

 By definition, $T\subset \{|z|>\rho_0\}$, and hence we can apply Theorem
   $\ref{thm:conjugacy}$ to $\Psi$. We obtain a disjoint-type entire function $f:\C\to\C$ with
   $|g(z)-f(z)|= O(1/|z|)$ and such that $f$ and $g$ are quasiconformally conjugate on their Julia sets. 

   \begin{thm} \label{thm:hypdiminproof}
     The hyperbolic dimension of $g$ is two. Hence also $\hypdim(f)=2$. 
      Both $f$ and $g$ have finite order.
   \end{thm}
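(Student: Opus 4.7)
The plan is to prove $\hypdim(g) = 2$ by realising the IFS heuristic sketched at the start of this section; from this, $\hypdim(f) = 2$ follows from Theorem~\ref{thm:conjugacy} together with the fact that quasiconformal maps preserve Hausdorff dimension two, while finite order of both $g$ and $f$ is immediate from Lemma~\ref{lem:sizeofG}.

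I will work in logarithmic coordinates with the $2\pi i$-periodic lift $\tilde G \in \BlogP$ of the dynamics of $g$. Fix a large index $i$ and let $Q_i \subset H$ be the rectangle of width and height $\asymp K_i$ centred at the real point $2K_i + R_0$. By the preceding corollary, for each of the $\asymp K_i$ integers $k$ with $K_i < wk + R_0 < 3K_i$, the points $\omega_k := \zeta_k + R_0$ and $\overline{\omega_k}$ both satisfy $\re G_0(\omega_k) = 2K_i$, so $2\pi i$-periodicity of $\tilde G$ provides, for each vertical translate $\omega_k + 2\pi i\ell$ sitting inside $Q_i$ (there are $\asymp K_i$ admissible $\ell$ per $k$), a univalent inverse branch $\phi_{k,\ell}$ of $\tilde G$ defined on $H$ sending a distinguished point of $Q_i$ to $\omega_k + 2\pi i\ell$. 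This produces $N_i \asymp K_i^2$ candidate IFS branches.

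Two ingredients drive the contraction estimate. First, Lemma~\ref{lem:metricH} gives $\rho_H \asymp 1/K_i$ throughout $Q_i$. Second, since $\zeta_k$ sits in the interior of an open chamber of $V$ (the condition $\eps_k \neq 0$ being forced by $\zeta_k \in \Gamma_{P_k}$), $\dist(\omega_k,\partial V')$ is bounded below by a universal constant, so $\rho_{\tilde V'}(\omega_k + 2\pi i\ell) \asymp 1$. Hence $|\phi_{k,\ell}'| \asymp 1/K_i$ at the distinguished preimage, and Koebe distortion on a fixed hyperbolic enlargement of $Q_i$ inside $H$ propagates this to bounded distortion on $\overline{Q_i}$, together with the inclusion $\phi_{k,\ell}(\overline{Q_i}) \subset Q_i$ once an $O(K_i)$ sub-collection of boundary branches is discarded. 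The Moran-type dimension formula for conformal iterated function systems then yields a limit set $\Lambda_i$ of Hausdorff dimension $d_i$ with $d_i = \log N_i/\log K_i + O(1/\log K_i) \to 2$ as $i \to \infty$.

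Since $\exp$ is conformal on $Q_i$, the image $\exp(\Lambda_i)$ is a hyperbolic subset of $J(g)$ of the same dimension $d_i$, and letting $i \to \infty$ yields $\hypdim(g) = 2$; Theorem~\ref{thm:conjugacy} then gives $\hypdim(f) = 2$. For finite order, Lemma~\ref{lem:sizeofG} gives $\log|g(z)| = \re G(\log z - R_0) \leq \exp\bigl(C_3(\log|z|-R_0)+C_4\bigr)$ for $z \in T$ sufficiently large, whence $\log_+\log_+|g(z)| = O(\log|z|)$; the same bound transfers to $f$ since $|f-g| = O(1/|z|)$ on $T$ and $f = O(1/|z|)$ outside $T$. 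The main obstacle will be the uniform-in-$i$ bookkeeping for the IFS: verifying the inclusion $\phi_{k,\ell}(\overline{Q_i}) \subset Q_i$, the contraction and distortion bounds, and the count $N_i \asymp K_i^2$ with constants independent of $i$, so that the Moran formula can be invoked at every scale to produce limit sets of Hausdorff dimension arbitrarily close to $2$.
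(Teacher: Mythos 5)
Your overall scheme is the right one and matches the paper's: build a conformal IFS on a square $Q\subset H$ of sidelength $\asymp K_i$ using $\asymp K_i^2$ branches, each contracting by a factor $\asymp 1/K_i$, apply the Moran lower bound, project by $\exp$, and push to $f$ via quasiconformal conjugacy. The finite-order argument via Lemma~\ref{lem:sizeofG} is also correct. However, the crucial estimate is mishandled, and the place where it goes wrong is precisely the point the paper flags as the only place the logarithmic shape of $H$ enters.

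You write that ``Lemma~\ref{lem:metricH} gives $\rho_H \asymp 1/K_i$ throughout $Q_i$'' and feed this into the derivative formula. But the point at which $\rho_H$ must be evaluated is \emph{not} a point of $Q_i$. The value $\im G_0(\zeta_k')$ is of order $|G(\zeta_k)|$, which by Lemma~\ref{lem:sizeofG} is \emph{exponentially large} in $K_i$. So the relevant preimage-of-$Q_i$ translation index $m_k$ is exponential in $K_i$, and the IFS branches must be of the form $\phi_k(z)=G_0^{-1}(z+2\pi i m_k)$; they are not inverse branches of the $2\pi i$-periodic $\tilde G$ with $\tilde G(\omega_k+2\pi i\ell)$ equal to a distinguished point of $Q_i$, since $\tilde G(\omega_k+2\pi i\ell)=G_0(\omega_k)$ lies at imaginary height $\asymp e^{cK_i}$, far outside $Q_i$. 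Consequently the derivative is $|\phi_k'(2K_i)|=\rho_H(2K_i+2\pi i m_k)/\rho_{V'}(\omega_k)$, and one must bound $\rho_H$ at $2K_i + 2\pi i m_k$ from below. This needs $\dist(2K_i+2\pi i m_k,\partial H)=O(K_i)$, which only holds because $H=\{x+iy:x>-14\log_+|y|\}$ and $\log(2\pi m_k)=O(K_i)$ by the growth estimate. Your sketch never invokes this: the claim $\rho_H\asymp 1/K_i$ ``throughout $Q_i$'' is true but evaluates the density at the wrong point. For a parabola-shaped $H$ the same $Q_i$-estimate would still hold, yet the theorem fails, so the gap is not merely notational. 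To repair it you must (i) define the branches with the explicit shift by $2\pi i m_k$ as in the paper, so that the derivative formula involves $\rho_H$ at $2K_i+2\pi i m_k$, and (ii) combine the logarithmic boundary of $H$ with Lemma~\ref{lem:sizeofG} to get $\dist(2K_i+2\pi i m_k,\partial H)\le (2+3C_3+C_4)K_i$.
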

 \begin{proof}
   Let us fix $i$ (sufficiently large) and $K:=K_i$; we shall construct a hyperbolic set whose dimension tends to two as $i$ tends to $\infty$.

   Let $k^-$ and $k^+$ be the minimal resp.\ maximal values of $k$ with
    $K+w/2 \leq w\cdot k + R_0 \leq 3K-w/2$. That is,
  \[ k^- := \left\lceil \frac{2K+w-2R_0}{2w}\right\rceil \quad\text{and}\quad
     k^+ := \left\lfloor\frac{6K-w-2R_0}{2w}\right\rfloor \] 
Set $\zeta_k' := \zeta_k + R_0$.
    Then $\re G_0(\zeta_k') = 2K$ by choice of $V$; observe also that
      $\im G_0(\zeta_k')>0$ (since $G_0$ is real on the real axis, and 
     the points $\zeta_k'$ lie above the real axis in $V$). Let us define
       \[
          m_k := \left \lfloor \frac{\im G_0(\zeta_k')}{2\pi}\right\rfloor\quad\text{and}\quad
        \omega_k := G_0^{-1}( 2K + 2\pi i m_k). \]
     In other words, $G_0(\omega_k)$ is the $2\pi i\Z$-translate between $2K$ and $G_0(\zeta_k')$ that is
     closest to $G_0(\zeta_k')$. 

    Let $Q$ be the square of sidelength $K$ centered at $2K$. 
     For $k^- \leq k \leq k^+$, define
       \[ \phi_k : Q\to V; z\mapsto G_0^{-1}(z + 2\pi i m_k). \]
    We begin by showing that the $\phi_k$ do not contract too strongly.
    \begin{claim}[Claim 1]
      There exists a universal constant $\lambda_0$ such that
        $|\phi_k'(z)|\geq \lambda_0/K$ for all $k\in\{k^-,\dots,k^+\}$ and all $z\in Q$. 
    \end{claim}
    \begin{subproof}
     Each $\phi_k$ extends conformally to a square of sidelength $2K$ centered at $2K$. 
     Hence it
      suffices to estimate the derivative of $\phi_k$ at the center of the square; the
      claim then follows from Koebe's distortion theorem. We have
        \begin{equation}  \label{eqn:contraction}
     |\phi_k'(2K)| = \frac{\rho_H(G_0(\omega_k))}{\rho_{V'}(\omega_k)} \geq 
                \frac{\dist(\omega_k,\partial V')}{4\dist(G_0(\omega_k),\partial H)}. 
      \end{equation}

     We first note that the hyperbolic distance between $\omega_k$ and $\zeta_k'$ is uniformly bounded 
      (and in fact tends to zero as $i$ tends to infinity). By choice of $\Xi$, 
     $\dist(\zeta_k', V')$ is uniformly
      bounded from below; hence $\dist(\omega_k, V')>\delta_1$ for some universal $\delta_1>0$.
     Furthermore,
      \begin{align*}
         \dist(G_0(\omega_k),\partial H ) &\leq 2K + 14\log_+(2\pi m_k) \leq
                       2K + 14\log_+|G(\zeta_k)| \\
          &\leq
              2K + C_3\cdot \re \zeta_k + C_4 \leq
              2K + 3 C_3 K + C_4 \leq (2+3C_3+C_4)K.
      \end{align*}      
     Substituting these two estimates into (\ref{eqn:contraction}) 
       completes the proof of the claim.
    \end{subproof}
  \begin{claim}[Claim 2]
   For $k^-\leq k\leq k^+$, we have $(2k-1)w/2 < \re \phi_k(z) < (2k+1)w/2$, provided
    $i$ was chosen sufficiently large.
  \end{claim}
  \begin{subproof}
    The hyperbolic distance in $V'$
     between $\omega_k$ and $\zeta_k'$ is uniformly bounded.
     The hyperbolic diameter of $Q+2\pi i m_k$ in $H$, and hence the hyperbolic
     diameter of $\phi_k(Q)$ in $V'$, is likewise uniformly bounded. Thus
     the hyperbolic distance between $\zeta_k'$ and $\phi_k(z)$ is uniformly bounded,
     independently of $k$ and $z\in Q$. On the other hand, as $k$ tends to infinity
     (under the assumption that 
       $K_i +w/2< w\cdot k + \rho_0 < 3K_i -w/2$
         for some $i$), we must have $\eps_k\to 0$
     by the same reasoning as in the 
     proof of Lemma \ref{lem:k(P)}. This implies that, for sufficiently large $k$, the
     set $\phi_k(Q)$ is contained in the $k$-th ``chamber'' of the tract $V'$, proving
     the claim. 
  \end{subproof}

 Consider the conformal iterated function system on $Q$ formed
   by the maps
     \[ \phi_k^{\ell}(z) := \phi_k(z) + 2\pi i \ell \]
    for $k^- \leq k \leq k^+$ and $|\ell| \leq (K-\pi)/2\pi$. By the preceding lemma, and
    choice of $\ell$, we have $\phi_k^{\ell}(Q)\subset Q$ for all $k$ and $\ell$, and 
    the images of $Q$ under these maps have pairwise disjoint closures. The
    number $N$ of functions in our IFS is
      \[ N = \left(2\left\lfloor \frac{K-\pi}{2\pi}\right\rfloor + 1\right)\cdot 
         (k^+ - k^- + 1 ) \geq
          \lambda_1\cdot K^2, \]
      where $\lambda_1$ is a suitable constant 
      (provided $i$, and hence $K$, is sufficiently large). 

  Let $X=X_i$ be the limit set of this iterated function system; i.e.\ $X$ is the unique
   compact set with 
    $X = \bigcup_{k,\ell} \phi_k^{\ell}(X)$.
   We have
     \[ \dimh(X_i) \geq \frac{\log N}{\inf_{k,\ell,z} \log |{(\phi_k^{\ell})^{-1}}'(z)|}
        \geq \frac{2\log K + \log \lambda_1}{\log K - \log \lambda_0}.  \]
    (Compare e.g.\ \cite[Lemma 2.10]{hypdim}.)
    So $\dimh(X_i)\to 2$ as $i$, and hence $K$, tends to infinity. 

  To conclude the proof, first note that $\exp(X_i)$ is invariant under $g$ by definition. 
    Indeed, let $z\in X_i$; say $z \in \phi_k^{\ell}(Q)$. Then 
      \begin{align*}
         g(\exp(z)) &= \exp(G_0(z-2\pi i \ell)) \\ &= \exp( (\phi_k)^{-1}(z-2\pi \ell)-2\pi i m_k)
              = \exp((\phi_k^{\ell})^{-1}(z)) \in \exp(X_i). \end{align*}
    So $\exp(X_i)$ is an invariant compact set for $g$. Every such set is a hyperbolic
     set for $g$ (since $g$ strictly expands
     the hyperbolic metric of $T$). This proves the claim for $g$, and the corresponding
     claim for the approximating function $f$ follows because the two functions are
     quasiconformally conjugate, and quasiconformal maps preserve sets of Hausdorff
     dimension two. Furthermore $g$ has finite order of growth by
     Lemma \ref{lem:sizeofG}, and the same holds for $f$. 
 \end{proof}
\begin{remark}[Remark 1]
  The key point in the proof at which the choice of the range $H$ comes into play is 
    Claim 1, which allows us to estimate the size of the pieces in the iterated function
    system from below. If our domain $H$ was, say, instead given by a ``parabola shape''
      \[ H = \{x + iy: x > -|y|^{\rho}\}, \]
    then the size of these pieces would shrink exponentially with $k$, and the
    proof breaks down completely.
\end{remark}
 \begin{remark}[Remark 2]
   The Ahlfors distortion theorem gives a precise value for the order of $g$ (and $f$)
     in terms of the shape of the tract $V$, and hence it is not difficult to see that,
     by varying the height $h$ of the tract, we can construct functions of any given
     order. Letting the height tend to zero (slowly) along the real axis, we can also
     construct functions of infinite order. 
 \end{remark}

To complete the proof of Theorem \ref{thm:hypdim}, it remains to show that---provided
   the $K_i$ were chosen appropriately---the Julia set of $f$ has positive area.
   This follows by letting the sequence $K_i$ grow extremely quickly, so as to 
   leave intermediate pieces where the tract has height $2\pi$, with the length of
   these pieces growing at least in an iterated exponential manner. 

  We can then apply \cite[Remark 3.1]{aspenbergbergweiler} to our
   function to see that the Julia set has positive area. That the hypotheses of this
   result are satisfied follows from \cite[Theorem 1.3]{aspenbergbergweiler}. We
   omit the details.

\section{Further applications}
\label{sec:examples}
 We now briefly comment on the construction of the other two
 counterexamples mentioned
  in the introduction.

 First we comment on Theorem \ref{thm:eremenkolowgrowth}. In
  \cite[Theorem 8.3]{strahlen}, a model function
  $\Psi:T\to\HH$ is constructed such that
  $z\mapsto e^{\Psi(z)}$ has the desired properties. Here the tract
  $T$ is constructed as $T=\exp(V)$, where $V$ is contained in a
  horizontal strip of height $2\pi$. Furthermore, the domain $V$ can
  be chosen to lie in any half plane
  $\{\re \zeta > R\}$; in particular for the choice $R=R_0=\log \rho_0$, where
   $\rho_0$ is as in Theorem \ref{thm:conjugacy}.

 By our theorems, we only need to show that, in this construction, we
  can replace the right half plane $\HH$ by the domain $H$ from
  Theorem \ref{thm:approx}. This can be seen in two ways. Either we 
  note that we can carry out the same construction in this setting also.
  Alternatively, let $\Psi$ be the model actually constructed in 
  \cite{strahlen}, and consider $\wt{\Psi} := \Psi\circ \Theta$, where
  $\Theta:\HH\to H$ is a conformal isomorphism. That this
  does not change the order of growth of the resulting function
  $e^{\wt{\Psi}}$ is easy to see by estimating the asymptotics of the
  map $\Theta$ using standard methods. 
  Thus it remains to show that the new map also has the property that
  the Julia set contains no unbounded path-connected components, and this
  follows exactly as in
  \cite[pp.\ 109--110: Proof of Theorem 1.1]{strahlen}. 

 \smallskip

 Now we turn to Theorem \ref{thm:devaney}. We indicate how to modify
  the proof in \cite{devaneyhairs}, where a model function
  $F:T\to \HH$ is constructed with the desired properties, to
  instead yield a model function $F:T\to H$, where $H$ is as in 
  Theorem \ref{thm:approx}. Here $T$ is a tract in logarithmic coordinates,
  so at the end of the construction, we will apply
  Theorem \ref{thm:approx} to the map
  $\Psi(e^z) := F(z)$. 

 The form of the tract $T$, as described in \cite[Section 6]{devaneyhairs} 
  is exactly the same; instead of choosing a conformal isomorphism
  $F:T\to \HH$ with $F(1)$ and
  $F(\infty)=\infty$, one takes instead a conformal isomorphism $F:T\to H$
  with the same properties. We need to show that
  \cite[Proposition 6.3]{devaneyhairs} also holds for this construction.

 Most of the proof again goes through verbatim, except for the following points,
  which previously used explicit formulas for the hyperbolic metric in
  $\HH$:
 \begin{itemize}
  \item Instead of the first displayed equation, we note that
     \[ \dist_H(1,F(u)) = \int_{1}^{F(u)} \rho_H(t)dt, \]
     and $1/t \geq \rho_H(t) \geq \frac{1}{2t}$ for all $t\in\R$, where
     the first inequality follows because $\HH\subset H$ and the
     second from the standard estimate on the hyperbolic metric in
     a simply-connected domain. Thus we see that
      \[ \dist_H(1,F(u))\leq \log(F(u)) \leq 2 \dist_H(1,F(u)). \]
     We again have $\dist_H(1,F(u))=\dist_T(1,u)$, and the proof proceeds
     as before.
  \item The estimate on $F(w_k)$ also uses the explicit formula 
    for the hyperbolic metric; this estimate is used in the choice
    of $\eps_k$ in the inductive construction.
    
   We could simply replace this estimate by that from
    Lemma \ref{lem:metricH}, but instead we
    make a more qualitative argument. The key point is
    that, for fixed $T>0$, the hyperbolic distance in $H$ between
    $R$ and the line $\{\re z = T\}$ tends to infinity as $R\to\infty$ by Lemma
    \ref{lem:metricH}.
              
   It follows that there is a function $\eta(R_0,r)$ with the following
    properties such that $R\geq R_0$ and
    $z\in H$ with $\dist_H(R,z)\leq r$, then
    $\re z > \eta(R_0,r)$. Furthermore, 
    for fixed $r$, we have $\lim_{R_0\to\infty} \eta(R_0,r)=\infty$. 

   Now, in the inductive definition, we again choose $\eps_k$ after
    $u_{k+1}$ has been chosen, but before $r_{k+1}$, in such a way that
       \[ \eta(b(r_k+1),12r_k) > u_{k+1} + \theta_k. \]
    (This is possible because $b(r_k+1)\to\infty$ as $\eps_k\to 0$.) 

   Since $\dist_H(F(w_k),F(r_k+1))\leq 12 r_k$ and
   $F(r_k+1)\geq b(r_k+1)$, we then again have
   $\re F(w_k)> u_{k+1}+\theta_k$, as desired. 
 \end{itemize}

\bibliographystyle{hamsplain}
\bibliography{/Latex/Biblio/biblio}

\end{document}